\newcommand{\al}{\alpha}
\newcommand{\be}{\beta}
\newcommand{\V}{\mathcal{V}}
\newcommand{\R}{\mathbb{R}}
\newcommand{\N}{\mathbb{N}}
\newcommand{\mH}{\mathcal{H}}
\newcommand{\F}{\mathcal{F}}
\newcommand{\B}{\mathcal{B}}
\newcommand{\VC}{\mathcal{VC}}
\newcommand{\MVC}{\mathcal{MVC}}
\newcommand{\lan}{\langle}
\newcommand{\ran}{\rangle}
\newcommand{\la}{\lambda}
\newcommand{\La}{\Lambda}
\newcommand{\si}{\sigma}
\newcommand{\Si}{\Sigma}
\newcommand{\de}{\delta}
\newcommand{\ep}{\epsilon}
\newcommand{\pr}{\prime}
\newcommand{\Om}{\Omega}
\newcommand{\Ga}{\Gamma}
\newcommand{\ga}{\gamma}
\newcommand{\ka}{\kappa}
\newcommand{\lap}{\triangle}
\newcommand{\ti}{\tilde}
\newcommand{\mL}{\mathcal{L}}
\newcommand{\mZ}{\mathbb{Z}}
\newtheorem{theorem}{Theorem}[section]
\newtheorem{proposition}[theorem]{Propostion}
\theoremstyle{remark}
\newtheorem{remark}[theorem]{Remark}
\theoremstyle{definition}
\newtheorem{definition}[theorem]{Definition}
\theoremstyle{plain}
\newtheorem{lemma}[theorem]{Lemma}
\numberwithin{equation}{section}
\begin{document}




\pagestyle{headings}
\renewcommand{\headrulewidth}{0.4pt}

\title{\textbf{Mass angular momentum inequality for axisymmetric vacuum data with small trace}}
\author{Xin Zhou\\}
\date{}
\maketitle

\pdfbookmark[0]{Mass angular momentum inequality for axisymmetric vacuum data with small trace}{}

\renewcommand{\abstractname}{}    
\renewcommand{\absnamepos}{empty} 
\begin{abstract}
\noindent\textbf{Abstract:} In this paper, we proved the mass angular momentum inequality\cite{D1}\cite{ChrusLiWe}\cite{SZ} for axisymmetric, asymptotically flat, vacuum constraint data sets with small trace. Given an initial data set with small trace, we construct a boost evolution spacetime of the Einstein vacuum equations as \cite{ChOM}. Then a perturbation method is used to solve the maximal surface equation in the spacetime under certain growing condition at infinity. When the initial data set is axisymmetric, we get an axisymmetric maximal graph with the same ADM mass and angular momentum as the given data. The inequality follows from the known results\cite{D1}\cite{ChrusLiWe}\cite{SZ} about the maximal graph.
\end{abstract}



\section{Introduction}

Based on the gravitational collapse pictures \cite{D}, it is conjectured that the angular momentum should be bounded by the mass for physically reasonable solutions of the Einstein equations. It is true for Kerr black hole solutions which are stationary. For dynamical, axisymmetric solutions some progresses have been made over the past few years. Dain \cite{D1} first proved such an inequality for Brill data(See Definition 2.1 of \cite{D1}), which is a class of specialized axisymmetric, maximal, asymptotically flat vacuum data. Later,  Chru\'sciel, Li and Weinstein \cite{Chrus1}\cite{ChrusLiWe} generalized it to a class of axisymmetric, maximal data admitting an Ernst potential with positive mass density and certain asymptotically flatness conditions. Recently R. Schoen and the author \cite{SZ} gave a simplified proof for more general asymptotic conditions and an $L^{6}$ norm bound.

All the existing results require the solutions to be maximal, which restricts the data to be a special time-slice in a spacetime. However it should be unnecessary according to the gravitational collapse pictures\footnote{The axisymmetric condition is indeed necessary, since otherwise vacuum counterexamples were constructed by Huang, Schoen and Wang \cite{HSW}}. It is natural and interesting to study the non-maximal case. In this paper we will prove the mass angular momentum inequality for non-maximal vacuum data with small trace by exploring the Einstein equations and a perturbation method. Using notations in Section \ref{ideas and main results}, our main theorem is


\begin{theorem}\label{mass angular momentum corollary}\emph{(\textbf{Main Theorem 1})}
Suppose $(\Si, e)$ is a simply connected 3-manifold, which is Euclidean at infinity with two ends and axisymmetric in the sense of Definition \ref{definition of axisymmetric}. Given an asymptotically flat, axisymmetric vacuum data $(g, k)\in\VC^{a}_{s+2, \de+\frac{1}{2}}(\Si)$(see Definition \ref{definition of axisymmetric2}) with $s\in\N$, $s\geq 7$, $\de\in\R$, $-\frac{3}{2}<\delta<-1$, if $\|tr_{g}k\|_{H_{s-2, \de+\frac{3}{2}}(\Si)}\leq\ep$ with $\ep$ given in Theorem \ref{main theorem}, we have
\begin{equation}
m\geq\sqrt{|J|},
\end{equation}
where $m$ and $J$ are the ADM mass (\ref{ADM mass}) and angular momentum (\ref{angular momentum}) of $(\Si, g, k)$ respectively.
\end{theorem}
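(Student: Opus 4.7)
The overall plan is to reduce the non-maximal case to the known maximal results. Since Dain, Chru\'sciel--Li--Weinstein and Schoen--Zhou have established $m\geq\sqrt{|J|}$ for axisymmetric maximal data satisfying the relevant asymptotic hypotheses, it suffices to exhibit, inside an evolution of $(\Si,g,k)$, a maximal Cauchy hypersurface $\widetilde{\Si}$ whose induced axisymmetric initial data lies in the class to which those results apply and whose ADM mass $\widetilde{m}$ and angular momentum $\widetilde{J}$ coincide with $m$ and $J$. The smallness of $\|tr_{g}k\|_{H_{s-2,\de+\frac{3}{2}}}$ should make such a maximal slice a small perturbation of $\Si$ itself.

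First I would invoke the boost-type existence theorem of Christodoulou--O'Murchadha to evolve the vacuum data $(g,k)$ to a spacetime $(M,\mathbf{g})$ containing a neighborhood of $\Si$ in which the lapse/shift and curvature decay in weighted Sobolev spaces compatible with the class $\VC^{a}_{s+2,\de+\frac{1}{2}}(\Si)$. Writing the sought-after slice as a graph $t=u(x)$ over $\Si$, the maximal surface equation becomes a quasilinear elliptic equation for $u$ in $(M,\mathbf{g})$. When $tr_{g}k\equiv 0$ the solution is $u\equiv 0$, so in our regime I would set up the equation as a perturbation off of $u=0$, with the inhomogeneous term controlled by $\|tr_{g}k\|_{H_{s-2,\de+\frac{3}{2}}}$.

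The heart of the argument, and the step I expect to be the main obstacle, is solving this perturbed maximal surface equation in the correct weighted function space. To ensure that the graph inherits the same ADM mass and angular momentum from $\Si$, the solution $u$ must be allowed to \emph{grow} at spatial infinity, not decay: one needs weights that capture the boost-type behavior rather than the standard asymptotically flat falloff, and the linearization (essentially a conformal Laplacian on $(\Si,g)$) must be invertible between the appropriate weighted Sobolev spaces. I would establish this Fredholm/isomorphism theory for the linearized operator on asymptotically Euclidean $(\Si,e)$ with two ends, then apply a fixed-point or Newton iteration on a ball of radius $\sim\ep$ in the weighted norm, using standard multiplication and composition estimates in $H_{s,\de}$ with $s\geq 7$ (which is what forces the Sobolev threshold in the hypothesis). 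Axisymmetry is preserved throughout since every step commutes with the $U(1)$ action, so the constructed $u$ and hence $\widetilde{\Si}$ are axisymmetric.

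Once $\widetilde{\Si}$ is produced, I would verify three things: (i) the induced metric $\widetilde{g}$ and second fundamental form $\widetilde{k}$ on $\widetilde{\Si}$ lie in the asymptotic class required by the maximal inequality of \cite{D1,ChrusLiWe,SZ}; (ii) $tr_{\widetilde{g}}\widetilde{k}=0$ by construction; and (iii) the ADM mass and angular momentum integrals computed on $\widetilde{\Si}$ equal those on $\Si$, which follows because the two slices agree modulo terms that decay faster than the boundary integrands, together with the divergence identities for $m$ and $J$ on the spacetime region bounded by them. Applying the maximal mass-angular momentum inequality to $(\widetilde{\Si},\widetilde{g},\widetilde{k})$ then gives $m=\widetilde{m}\geq\sqrt{|\widetilde{J}|}=\sqrt{|J|}$, completing the proof.
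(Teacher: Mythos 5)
Your proposal follows essentially the same route as the paper: construct the boost evolution via Christodoulou--O'Murchadha, solve the maximal surface equation for a graph $t=u(x)$ by perturbing off $u\equiv 0$ in weighted Sobolev spaces that permit sub-linear growth of $u$ (the key point you correctly identify, ensuring mass and angular momentum are preserved), verify the induced data is axisymmetric, maximal, and has the required asymptotics, and then invoke the known maximal-case inequality. The paper packages the entire construction into its Theorem \ref{main theorem} and then derives Theorem \ref{mass angular momentum corollary} as a short corollary, but the content matches what you outline.
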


Our method comes from a question suggested by R. Schoen:
\vspace{-6pt}
\begin{itemize}
\addtolength{\itemsep}{-0.7em}
\item[$\textbf{(Q)}:$] \emph{Is there a canonical way to deform a non-maximal, axisymmetric, vacuum data to a unique maximal, vacuum data with the same physical quantities, i.e. the mass and angular momentum, which also preserves the axially symmetry?}
\end{itemize}
\vspace{-6pt}
A definite answer of the above question will imply the mass angular momentum inequality in the non-maximal case. In fact, there are already some works about the deformation of vacuum constraint equations \cite{BaI}\cite{CS}. But it is hard to maintain the symmetries and physical quantities at the same time. So the main difficulty is to maintain the symmetries and the physical quantities simultaneously when deforming the vacuum constraint equations. We overcome this difficulty by using certain conversation laws of the Einstein equations.

\subsection{General Relativity backgrounds}

In Einstein's theory for General Relativity\footnote{We refer to \cite{W} for all the concepts.}, we use $(\mathcal{V}^{3, 1}, \gamma)$ to denote a spacetime, where $\V^{3, 1}$ is a 4-dimensional oriented smooth manifold, and $\gamma$ is a Lorentzian metric of signature $(3, 1)$. The Einstein equation, which predicts the evolution of the spacetime, is given by
\begin{equation}\label{einstein equation 1}
Ric_{\gamma}-\frac{1}{2}R_{\gamma}\gamma=8\pi T,
\end{equation}
where $Ric_{\gamma}$ is the Ricci curvature of $\gamma$, and $R_{\gamma}$ the scalar curvature of $\gamma$. $T$ is the stress-energy tensor. In the vacuum case, $T\equiv 0$, so the \emph{Einstein vacuum equation}, abbreviated as (\textbf{EVE}) in the following, reduces to
\begin{equation}\label{EVE}
Ric_{\gamma}=0.
\end{equation}

A \emph{vacuum constraint initial data set} or abbreviated as \emph{vacuum data} for the Einstein vacuum equations is a triple $(\Sigma, g, k)$, where $\Sigma$ is a connected complete 3-dimensional manifold, $g$ a Riemmanian metric, and $k$ a symmetric two tensor on $\Sigma$, satisfying the \emph{vacuum constrain equations}, abbreviated as (\textbf{VCE}),
\begin{equation}\label{VCE}
\left.\bigg\{ \begin{array}{ll}
R_{g}-|k|^{2}_{g}+(tr_{g}k)^{2}=0,\\
div_{g}(k-(tr_{g}k)g)=0.
\end{array} \right.
\end{equation}
By the famous initial value formulation for the Einstein equations by Y. Choquet-Bruhat in 1952(see \cite{Ch2}\cite{W}), we can always think the vacuum data $(\Si, g, k)$ as been embedded in some spacetime $(\V, \ga)$ satisfying (EVE), where $g$ is the restriction of $\ga$ to $\Si$, and $k$ is the second fundamental form of the embedding.

$(\Si, e)$ is called \emph{Euclidean at infinity}, where $e$ is a Riemannian metric on $\Si$, if there is a compact subset $\Si_{int}\subset\Si$, such that the compliment $\Si_{ext}=\Si\setminus\Si_{int}$ is a disjoint union of finitely many open sets $\Si_{ext}=\cup_{i}E_{i}$, and each $E_{i}$ is diffeomorphic to $\R^{3}$ cutting off a ball $B_{R}$, and on each $E_{i}$, $e$ is the pull back of the standard Euclidean metric on $\R^{3}$. Here $\Si_{int}$ is called the \emph{interior region}, $\Si_{ext}$ the \emph{exterior region}, and each $E_{i}$ an \emph{end}. Each end $E$ has a coordinate system $\{x_{i}:\ i=1, 2, 3\}$ inherited from $\R^{3}$. Let $r=\sqrt{\sum_{i}(x_{i})^{2}}$. $(\Si, g, k)$ is said to be \emph{asymptotically flat}, abbreviated as (\textbf{AF}), if $(\Si, e)$ is Enclidean at infinity for some $e$, and there exists an $\al>\frac{1}{2}$, such that under coordinates $\{x_{i}:\ i=1, 2, 3\}$,
\begin{equation}\label{decay of g and k}
g_{ij}=\de_{ij}+o_{2}(r^{-\al}),\ k_{ij}=o_{1}(r^{-1-\al}).
\end{equation}
Under these conditions, the ADM mass is defined as,
\begin{equation}\label{ADM mass}
m=\lim_{r\rightarrow\infty}\frac{1}{16\pi}\int_{S_{r}}(g_{ij, i}-g_{ii, j})\nu^{j}d\si(r),\ 
\end{equation}
where $g_{ij, k}=\frac{\partial g_{ij}}{\partial x_{k}}$ and $\nu^{j}$ is the Euclidean unit outer normal of $S_{r}$ with $d\si(r)$ the surface element of $S_{r}$. The famous positive mass theorem by Schoen and Yau \cite{SY1}\cite{SY2} and Witten \cite{Wi} says that $m\geq0$ under the dominant energy condition.

If the initial data set $(\Si, g, k)$ is axisymmetric(\cite{D1}\cite{Chrus1}) under a Killing vector field $\xi$, i.e.
\begin{equation}\label{symmetry}
\mL_{\xi}g=0,\ \mL_{\xi}k=0,
\end{equation}
where $\mL$ denotes the Lie derivative, we also have a well-defined angular momentum $J$(\cite{D1}\cite{W}) of a close 2-surface $S\subset\Si$
\begin{equation}\label{angular momentum}
J(S)=\frac{1}{8\pi}\int_{S}\pi_{ij}\xi^{i}\nu^{j}d\si_{g},
\end{equation}
where $\pi_{ij}=k_{ij}-tr_{g}(k)g_{ij}$ is divergence free by (\ref{VCE}), and $\nu$, $d\si_{g}$ are, respectively the unit outer normal of $S$ and surface element w.r.t. $g$.

\subsection{Ideas and main results}\label{ideas and main results}

In this paper, we will prove the mass angular momentum inequality for certain axisymmetric, AF vacuum data $(\Sigma, g, k)$ with small $tr_{g}k$, especially we partially solved the question asked by Schoen. We will use the full Einstein equations and a perturbation method. Given an AF vacuum data $(\Si, g, k)$, we will solve the boost problem of (EVE) for $(\Si, g, k)$ as \cite{ChOM}\cite{BaChrusOM} to get a spacetime $(\V, \gamma)$, where $\V$ is a subset of $\Sigma\times\R$ which grows linearly at infinity. Given a function $u$ defined on $\Sigma$, the graph $Graph_{u}=\{(x, u(x))\in\Si\times\R, x\in\Sigma\}$ of $u$ lies inside $\V$, when $|u|$ has roughly sub-linear growth. We want to find a solution to $H_{u}=0$, where $H_{u}$ is the mean curvature of $Graph_{u}$ w.r.t. $(\V, \ga)$. Now fix a 3-manifold $(\Si, e)$ Euclidean at infinity, we can construct a mapping $\mH$ which takes the triple $(g, k, u)$ to the mean curvature $H_{u}$, i.e. $\mH: (g, k, u)\rightarrow H_{u}$. Viewing $(g, k)$ as parameters and $u$ as unknown function, our equation changes to
\begin{equation}\label{maximal surface equation}
\mH(g, k, u)=0.
\end{equation}
When $(g, k)$ is maximal, $u\equiv 0$ is a solution to (\ref{maximal surface equation}). So we can use the inverse function theorem to solve $\mH(g, k, u)=0$ when $tr_{g}k$ is small enough. From now on, we always assume $s\in\N$ and $\de\in\R$. Using notations from Section \ref{preliminaries}, we have

\begin{definition}\label{definition of VC and MVC}
Fix a 3-dimensional manifold $(\Si, e)$ which is Euclidean at infinity.
\vspace{-6pt}
\begin{itemize}
\addtolength{\itemsep}{-0.7em}
\item[(1)] The \emph{vacuum constraint data sets} $\VC_{s+1, \de+\frac{1}{2}}(\Si)$ is defined to be the set of solutions $(g, k)$ to (\ref{VCE}), such that $(g-e, k)\in H_{s+1, \de+\frac{1}{2}}(\Si)\times H_{s, \de+\frac{3}{2}}(\Si)$.

\item[(2)] The \emph{maximal vacuum constraint data sets} $\MVC_{s+1, \de+\frac{1}{2}}(\Si)$ is defined to be the subset of $\VC_{s+1, \de}(\Si)$ satisfying $tr_{g}k=0$.
\end{itemize}
\vspace{-6pt}
Inside $\VC_{s+1, \de+\frac{1}{2}}(\Si)$ and $\MVC_{s+1, \de+\frac{1}{2}}(\Si)$, we use the topology induced by the Sobolev norms of $H_{s+1, \de+\frac{1}{2}}(\Si)\times H_{s, \de+\frac{3}{2}}(\Si)$ as in Definition \ref{def for weighted sobolev space2}.
\end{definition}

\begin{definition}\label{definition of axisymmetric}
A simply connected 3-manifold $(\Si, e)$ which is Euclidean at infinity is called \emph{axisymmetric}, if
\vspace{-6pt}
\begin{itemize}
\setlength{\topsep}{0bp} 
\addtolength{\itemsep}{-0.7em}
\item[(1)] $\Si$ is diffeomorphic to $\R^{3}$ minus some points $\{a_{k}\}_{k=1}^{l-1}$ on the $z-$axis $\Ga=\{(\rho, \varphi, z)\in\R^{3}:\ \rho=0\}$, with one end modeled by a neighborhood of $\infty$, and other ends by a neighborhood of $a_{k}$ with coordinates given by a Kelvin transformation: $\{x^{\pr}=\frac{x-a_{k}}{|x-a_{k}|^{2}}\}$;
\item[(2)] $\mL_{\partial_{\varphi}}e=0$, where $\varphi$ is the azimuth of the cylindrical coordinate $\{\rho, \varphi, z\}$.
\end{itemize}
\end{definition}

\begin{remark}
Near $\infty$, $e$ is given by the Euclidean metric $ds_{0}^{2}$, and near each pucture $a_{k}$, $e$ is the pull back of the Euclidean metric by the Kelvin transformation, i.e. $e=\frac{1}{|x|^{4}}ds_{0}^{2}$. In fact, by Chru\'sciel's reduction in \cite{Chrus1}, any simply connected, axisymmetric, AF vacuum data $(\Si, g)$ has the underlying topology $\Si$ given by $\R^{3}$ minus finitely many points on the $z$ axis, with the Killing vector field $\frac{\partial}{\partial\varphi}$.
\end{remark}

\begin{definition}\label{definition of axisymmetric2}
Given $(\Si, e)$ as in Definition \ref{definition of axisymmetric}.
\vspace{-6pt}
\begin{itemize}
\addtolength{\itemsep}{-0.7em}
\item[(1)] An initial data set $(g, k)$ is called \emph{axisymmetric}, if symmetry conditions (\ref{symmetry}) hold for the Killing vector field $\xi=\frac{\partial}{\partial\varphi}$.
\item[(2)] $\VC^{a}_{s+1, \de+\frac{1}{2}}(\Si)$ and $\MVC^{a}_{s+1, \de+\frac{1}{2}}(\Si)$ are the axisymmetric subset of $\VC_{s+1, \de+\frac{1}{2}}(\Si)$ and $\MVC_{s+1, \de+\frac{1}{2}}(\Si)$ respectively.
\end{itemize}
\end{definition}

The following Theorem is one of our main results, which is a summarization of Theorem \ref{main theorem first part}, Lemma \ref{regularity of the maximal surface}, Lemma \ref{conservation of mass} and Theorem \ref{main theorem second part}.

\begin{theorem}\label{main theorem}\emph{(\textbf{Main Theorem 2})}
Given $s\geq 4$, $-2<\de<-1$.
\vspace{-5pt}
\begin{itemize}
\addtolength{\itemsep}{-0.7em}
\item[(i)] Let $(\Si, e)$ be a 3-dimensional manifold which is Euclidean at infinity. For any $(g, k)\in\VC_{s+2, \de+\frac{1}{2}}(\Si)$, where $\la e\leq g\leq\la^{-1}e$ for some $\la>0$, there exists a small number $\ep$ depending only on $\la$ and $\|g-e\|_{H_{s+2, \de+\frac{1}{2}}(\Si)}+\|k\|_{H_{s+1, \de+\frac{3}{2}}(\Si)}$, such that if $\|tr_{g}k\|_{H_{s-2, \de+\frac{3}{2}}(\Si)}\leq\ep$, then there exists a spacetime $(\V, \ga)$ solving the (EVE), and a function $u\in H_{s+2, \de-\frac{1}{2}}(\Si)$ solving the maximal surface equation (\ref{maximal surface equation}) inside $(\V, \ga)$. The induced metric $g_{u}$ and second fundamental form $k_{u}$ of $Graph_{u}$ satisfy $(g_{u}, k_{u})\in\MVC_{s+1, \de+\frac{1}{2}}(\Si)$.
\item[(ii)] If $-\frac{3}{2}<\de<-1$, the ADM mass of $(\Si, g_{u}, k_{u})$ is the same as that of $(\Si, g, k)$.
\item[(iii)] If $(\Si, e, g, k)$ is simply connected, axisymmetric, then $u$ can be chosen to be axisymmetric, hence $(\Si, g_{u}, k_{u})$ is axisymmetric, and has the same angular momentum as $(\Si, g, k)$.
\end{itemize}
\end{theorem}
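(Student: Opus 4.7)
The plan is to prove the three parts sequentially: first construct the evolved spacetime and solve the maximal surface equation by a perturbation argument, then verify conservation of the asymptotic charges.

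For part (i), I would first invoke the boost problem of Christodoulou--O'Murchadha: given $(g,k)\in\VC_{s+2,\de+\frac{1}{2}}(\Si)$, this produces a vacuum spacetime $(\V,\ga)$ on a subset of $\Si\times\R$ that enlarges linearly near spatial infinity, so that any graph $Graph_{u}$ with $|u|$ growing sublinearly sits inside $\V$. Next I would set up the mean-curvature map $\mH:(g,k,u)\mapsto H_{u}$, treating $(g,k)$ as parameters and $u$ as the unknown, and compute its linearization $D_{u}\mH(g,k,0)$. At a maximal datum this reduces to the Jacobi operator $\Delta_{g}+|k|_{g}^{2}$ on $\Si$ (the ambient Ricci term vanishes by (EVE)), which is an isomorphism $H_{s+2,\de-\frac{1}{2}}(\Si)\to H_{s,\de+\frac{3}{2}}(\Si)$ by standard weighted theory for asymptotically flat Laplacians; the range $-\frac{3}{2}<\de<-1$ lies in the invertibility window and avoids the indicial roots of the flat Laplacian. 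A quantitative perturbation argument then shows that for general $(g,k)$ with $\|tr_{g}k\|_{H_{s-2,\de+\frac{3}{2}}}\leq\ep$ the operator $D_{u}\mH(g,k,0)$ remains an isomorphism with a uniform inverse bound, while $\mH(g,k,0)$ is of the order of $tr_{g}k$ and hence small. An implicit function theorem (or Banach contraction) argument then produces a unique $u\in H_{s+2,\de-\frac{1}{2}}$ solving $\mH(g,k,u)=0$. The Gauss--Codazzi equations in the vacuum spacetime automatically place $(g_{u},k_{u})$ in $\VC_{s+1,\de+\frac{1}{2}}$, and $\mH=0$ is precisely $tr_{g_{u}}k_{u}=0$, so the induced data lie in $\MVC_{s+1,\de+\frac{1}{2}}$.

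For part (ii), the point is that $u\in H_{s+2,\de-\frac{1}{2}}$ with $-\frac{3}{2}<\de<-1$ decays strictly faster than the mass-defining leading part of $g-e$ on each end. Thus $Graph_{u}$ is an AF slice asymptotic to the base slice to sufficiently high order that the ADM integrals defined on large coordinate spheres agree in the limit. I would make this precise by writing $m(g_{u},k_{u})-m(g,k)$ as the limiting flux at infinity of a one-form constructed from $du$ and $k$, and applying the divergence theorem together with the momentum constraint and the decay rate of $u$.

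For part (iii), axisymmetry of $(\Si,g,k)$ propagates to a rotational Killing field of the evolved spacetime $(\V,\ga)$ by the uniqueness theorem for (EVE) with symmetric initial data. The map $\mH$ then commutes with the induced azimuthal action on $u$, so either by uniqueness of the perturbation solution, or by symmetrising and invoking uniqueness in $H_{s+2,\de-\frac{1}{2}}$, the solution $u$ is axisymmetric; hence so are $(g_{u},k_{u})$ with the same Killing field $\partial_{\varphi}$. Conservation of the angular-momentum flux integral then follows by the same flux-at-infinity argument as in (ii), using the momentum constraint and the identities $\mL_{\xi}g=\mL_{\xi}k=0$. The main obstacle I anticipate is establishing invertibility and uniform control for $D_{u}\mH(g,k,0)$ in the chosen weighted Sobolev spaces in the non-maximal case, since $\mH$ depends through $(\V,\ga)$ on the boost evolution, which is itself controlled only via quasi-linear energy estimates; arranging the threshold $\ep$ to depend only on $\la$ and on $\|g-e\|_{H_{s+2,\de+\frac{1}{2}}}+\|k\|_{H_{s+1,\de+\frac{3}{2}}}$, rather than on the full geometry of the (large) boost region, will require careful quantitative bookkeeping.
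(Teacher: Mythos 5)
Your overall architecture (boost evolution $\Rightarrow$ linearize the graph mean curvature in $u$ $\Rightarrow$ quantitative inverse/implicit function theorem $\Rightarrow$ verify charges) matches the paper, but there are two substantive gaps in the linearized analysis that would break the argument as written.

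\textbf{Sign of the Jacobi operator.} The linearization of $\mH(g,k,\cdot)$ at $(g,k,0)$ is $L_{0}v=(\lap_{g}-|k|_{g}^{2})v$, not $(\lap_{g}+|k|_{g}^{2})v$. This comes from the second variation formula $\partial_{s}H(s)|_{s=0}=-\lap_{g}\lan X,N\ran+\lan X,N\ran(|k|^{2}+Ric_{\ga}(N,N))+\lan X,\nabla_{g}H\ran$ with $X=v\,\partial_{t}$, $N=\partial_{t}$ timelike, so $\lan X,N\ran=-v$ and the $|k|^{2}$ term enters with a minus sign. This is not cosmetic: the paper's surjectivity lemma (Lemma \ref{surjectivity of L}) establishes that $L_{0}$ is onto precisely because the zeroth-order coefficient $a_{0}=-|k|_{g}^{2}\leq 0$ permits a maximum-principle argument at infinity, which upper-bounds $\dim\ker(L_{0})$ by $\dim\ker(\lap_{e})$ and hence (via index invariance) kills the cokernel. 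With $a_{0}=+|k|_{g}^{2}\geq 0$, and with $|k|$ allowed to be large (only $tr_{g}k$ is small), this comparison fails and surjectivity of $L_{0}$ would not follow.

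\textbf{$L_{0}$ is not an isomorphism on the full weighted space.} You assert that $D_{u}\mH(g,k,0)$ is an isomorphism between the weighted Sobolev spaces because $-\frac{3}{2}<\de<-1$ ``avoids the indicial roots.'' In fact the relevant weight $\de-\frac{1}{2}$ lies below the threshold $-\frac{3}{2}$ at which constants (and, with several ends, end-wise asymptotically constant harmonic functions) enter the source space, so $\ker(L_{0},\de-\frac{1}{2})$ is nontrivial and finite-dimensional. The paper's Lemmas \ref{Fredholmness of L}--\ref{inverse bound for L} show $L_{0}$ is surjective with a finite-dimensional kernel and is an isomorphism only when restricted to $\ker(L_{0},\de-\frac{1}{2})^{\perp}$, with a uniform inverse bound obtained there by a compactness/contradiction argument. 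This restriction is not a technicality: the uniqueness statement that your part (iii) invokes (``by uniqueness of the perturbation solution, the solution $u$ is axisymmetric'') is only available inside $\B_{\rho'}\cap\ker(L_{0},\de-\frac{1}{2})^{\perp}$, and one must additionally verify that $\ker(L_{0},\de-\frac{1}{2})$ and its complement are invariant under the azimuthal action before $\phi_{s}^{*}u$ can be compared with $u$ there. Without identifying the kernel and working in its orthogonal complement, the uniqueness you rely on, and therefore the axisymmetry of $u$, does not follow.

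Two smaller points: the perturbation step as the paper runs it maps $\B_{\rho}\subset H_{s,\de-\frac{1}{2}}(\Si)\to H_{s-2,\de+\frac{3}{2}}(\Si)$ (two derivatives are lost through composition with the graph and restriction from $\Om_{\theta}$), so it produces $u\in H_{s,\de-\frac{1}{2}}$; the claimed $u\in H_{s+2,\de-\frac{1}{2}}$ requires a separate elliptic bootstrap (the paper's Lemma \ref{regularity of the maximal surface}). For part (ii), your flux-at-infinity computation is a reasonable alternative, though the paper's route is more direct: show $(g_{M})_{ij}(x)-g_{ij}(x)\in H_{s+1,\de+1}(\Si)\subset C^{s-1}_{\be}(\Si)$ with $\be>1$, so the difference decays faster than $1/r$ and cannot contribute to the ADM integral.

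Finally, you correctly anticipate the issue of making $\ep$ depend only on $\la$ and $\|g-e\|_{H_{s+2,\de+\frac{1}{2}}}+\|k\|_{H_{s+1,\de+\frac{3}{2}}}$, but you do not supply a mechanism. The paper resolves this by the compactness argument in Lemma \ref{inverse bound for L} giving a uniform inverse bound on $\ker^{\perp}$, combined with uniform boost-region bounds for $\|\ga-\ti{\eta}\|_{H_{s+2,\de}(\Om_{\theta})}$ from Theorem \ref{boost theorem}, so that the Lipschitz modulus of $u\mapsto D_{u}\mH(g,k,u)$ in the quantitative inverse function theorem is controlled in terms of the initial data alone.
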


\begin{remark}
The weight $\de$ corresponds the decay $g\sim e+o(r^{-(\de+2)})$ and $k\sim o(r^{-(\de+3)})$ by the Sobolev embedding lemma \ref{properties of weighted sobolev case2}. $(g_{u}, k_{u})$ is always assumed to be pulled back to $\Si$ by the graphical map $F_{u}: x\rightarrow(x, u(x))$.
\end{remark}
\begin{remark}
The order of regularity of our final solution $(g_{u}, k_{u})$ decreases by $1$ than our starting data $(g, k)$. This is due to the fact that the restriction of $H_{s}$ Soblev functions on a spacetime to a hypersurface decreases the regularity by $1$ (see Lemma \ref{restriction}).
\end{remark}

Our main Theorem \ref{mass angular momentum corollary} is then a corollary of the above theorem.

\vspace{6pt}
\noindent\textbf{Proof of Theorem \ref{mass angular momentum corollary}:}
Let $u$ be the solution given in part $(iii)$ of Theorem \ref{main theorem}. Then the induced maximal data $(g_{u}, k_{u})\in\MVC^{a}_{s+1, \de+\frac{1}{2}}(\Si)$, and the ADM mass $m$ and angular momentum $J$ of $(g, k)$ and $(g_{u}, k_{u})$ are the same. Now by Sobolev embedding lemma \ref{properties of weighted sobolev case2}, $(g_{u}-e, k_{u})\in C^{s-1}_{\be}(\Si)\times C^{s-2}_{\be+1}(\Si)$ for some $\frac{1}{2}<\be<\de+2<1$. So $(\Si, g_{u}, k_{u})$ is an axisymmetric, maximal vacuum data, with asymptotic conditions $g_{u}=\de+O_{s-1}(\frac{1}{r^{\be}})$ and $k_{u}=O_{s-2}(\frac{1}{r^{\be+1}})$, so the mass angular momentum inequality in \cite{SZ} holds on $(\Si, g_{u}, k_{u})$. Hence $m\geq\sqrt{|J|}$. \quad $\Box$ 
\vspace{6pt}

The paper is organized as follows: In Section \ref{preliminaries} we will review the weighted Sobolev space theories covered by \cite{ChOM}\cite{Ch1}\cite{ChoCH}\cite{Ba2} and the geometry of hypersurfaces in $3+1$ dimension Lorentzian spaces. In Section \ref{boost evolution} we will extend the boost theory in \cite{ChOM}\cite{Ch1} to the case of multi-ends. In Section \ref{perturbatoin method} we will set up a perturbation problem for the mean curvature of graphs. We will take initial data sets as parameters and use linear theory in \cite{Ba2}\cite{L}and\cite{ChoCH} and the Quantitative Inverse Function Theorem \ref{quantitative inverse function theorem}. Finally we will prove the main results in Section \ref{Proof of main theorem}.\\

\noindent\textbf{Acknowledgement:} The author would like to express his gratitude to his advisor Professor Richard Schoen for all of his helpful guidance and constant encouragement. He would like to thank Professor Rafe Mazzeo and Professor Leon Simon for lots of useful talks. He would also like to thank his friend Pin Yu for talking a lot about the hyperbolic equations.


\section{Preliminaries}\label{preliminaries}

In this section, we give some preliminary results on the weighted Sobolev space theories and the geometry of hypersurfaces in Lorentz spaces. 

\subsection{Weighted Sobolev space theories}\label{weighted sobolev space}

Here we give our definition of the weighted sobolev space. Most of the results here can be found in \cite{ChoCH}\cite{ChOM} and \cite{Ch1}. We will mainly talk about two types of domains.

\noindent\textbf{Type 1 domain: sub-domain of $\R^{3}$.}

Let $U$ be an open set in $\R^{n}$, $\si(x)=(1+|x|^{2})^{1/2}$ for $x\in\R^{n}$, and $V$ a finite dimensional vector space. Given $s\in\N$, $\de\in\R$.
\begin{definition}\label{def for weighted sobolev space}
$C^{s}_{\de}(U)$ is the Banach space of $C^{s}$ functions $u: U\rightarrow V$, with finite norm
$$\|u\|_{C^{s}_{\de}(U)}=\sup_{U}\big\{\sum_{|\al|\leq s}\si^{\de+|\al|}|D^{\al}u|\big\}.$$
$H_{s, \de}(U)$ is the class of functions $u: U\rightarrow V$, with weak derivatives up to order $s$, such that $\si^{\de+|\al|}D^{\al}u\in L^{2}(U)$ for all $\al\leq s$. $H_{s, \de}(U)$ is a Hilbert space with inner product:
$$\lan u_{1}, u_{2}\ran_{H_{s, \de}(U)}=\sum_{|\al|\leq s}\lan\si^{\de+|\al|}D^{\al}u_{1}, \si^{\de+|\al|}D^{\al}u_{2}\ran_{L^{2}(U)}.$$
Then the norm is: $\|u\|_{H_{s, \de}(U)}=\lan u, u\ran^{1/2}_{H_{s, \de}(U)}$.
\end{definition}

Now we will list some properties of $H_{s, \de}(U)$, which can be found in \cite{ChOM}\cite{ChoCH} and \cite{Ch1}. Given $0<\ep\leq 1$, and $\phi_{\ep}: \R^{3}\rightarrow\R^{3}$ defined by $\phi_{\ep}(x)=\frac{x}{(\si(x))^{1-\ep}}$. An open subset $U\subset\R^{3}$ is said to have the \emph{extended cone property} if $\phi_{\ep}(U)$ has the cone property\footnote{See the remark under Definition 2.3 of \cite{ChOM}} for each $0<\ep\leq 1$.
\begin{lemma}\label{properties of weighted sobolev space}
Given $U$ satisfying the extended cone property,
\vspace{-4pt}
\begin{itemize}
\addtolength{\itemsep}{-0.7em}
\item[$(i)$]\emph{(embedding)}. If $s^{\pr}<s-\frac{n}{2}$ and $\de^{\pr}<\de+\frac{n}{2}$, the inclusion $H_{s, \de}(U)\subset C^{s^{\pr}}_{\de^{\pr}}(U)$ is continuous;

\item[$(ii)$]\emph{(multiplication)}. If $s\leq s_{1}, s_{2}$, $s<s_{1}+s_{2}-\frac{n}{2}$ and $\de<\de_{1}+\de_{2}+\frac{n}{2}$, the multiplication $(f_{1}, f_{2})\rightarrow f_{1}f_{2}$ is continuous: $H_{s_{1}, \de_{1}}(U)\times H_{s_{2}, \de_{2}}(U)\rightarrow H_{s, \de}(U)$.
\end{itemize}
\vspace{-4pt}
Hence $H_{s, \de}(U)$ is a Banach algebra if $s>\frac{n}{2}$ and $\de>-\frac{n}{2}$.
\end{lemma}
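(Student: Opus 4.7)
The plan is to reduce both claims to the classical (unweighted) Sobolev inequalities on fixed-size domains by means of a dyadic decomposition of $U$ and rescaling, which is the standard technique for weighted Sobolev theory on asymptotically Euclidean regions. First I would decompose $U$ into a bounded piece near the origin together with annular pieces $A_j = U \cap \{2^{j-1} \leq \sigma(x) \leq 2^{j+1}\}$ for $j \geq 1$. Since $\sigma$ is comparable to $2^j$ on $A_j$, both $\|\cdot\|_{H_{s,\delta}(A_j)}$ and $\|\cdot\|_{C^{s'}_{\delta'}(A_j)}$ are, up to constants, the unweighted norms multiplied by explicit powers of $2^j$. Then I would rescale $A_j$ to a unit-sized region $\tilde A_j$ via $y = 2^{-j} x$, so that the change of variables enters both the norm and the derivatives in a controllable way.

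For part (i), the plan is to apply the classical Sobolev embedding $H^s(\tilde A_j) \hookrightarrow C^{s'}(\tilde A_j)$ (valid since $s' < s - n/2$) on each rescaled annulus, then scale back. The chain rule gives a factor $2^{-j|\alpha|}$ per derivative, and the change of volume element contributes $2^{jn/2}$. Combining these with the weight $2^{j(\delta' + |\alpha|)}$ from the target norm leads to an estimate of the schematic form
\[
\sup_{A_j} \sigma^{\delta' + |\alpha|} |D^\alpha u| \leq C \cdot 2^{j(\delta' - \delta - n/2)} \|u\|_{H_{s,\delta}(A_j)},
\]
so that the hypothesis $\delta' < \delta + n/2$ produces geometric decay in $j$ and hence a uniform $C^{s'}_{\delta'}(U)$ bound.

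For part (ii), the plan is to apply the classical multiplication $H^{s_1}(\tilde A_j) \cdot H^{s_2}(\tilde A_j) \hookrightarrow H^s(\tilde A_j)$, valid under $s \leq s_1, s_2$ and $s < s_1 + s_2 - n/2$. Rescaling each factor as in (i) produces
\[
\|f_1 f_2\|_{H_{s, \delta}(A_j)} \leq C \cdot 2^{j(\delta - \delta_1 - \delta_2 - n/2)} \|f_1\|_{H_{s_1, \delta_1}(A_j)} \|f_2\|_{H_{s_2, \delta_2}(A_j)}.
\]
Squaring and summing in $j$, the hypothesis $\delta < \delta_1 + \delta_2 + n/2$ makes the prefactor uniformly bounded by $1$, and an $\ell^\infty$--$\ell^1$ pairing together with the almost-disjointness $\sum_j \|f_i\|_{H_{s_i,\delta_i}(A_j)}^2 \leq C \|f_i\|_{H_{s_i,\delta_i}(U)}^2$ gives the global product estimate. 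The Banach algebra statement is then the special case $s_1 = s_2 = s$, $\delta_1 = \delta_2 = \delta$, which produces exactly the stated conditions $s > n/2$ and $\delta > -n/2$.

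The main obstacle I anticipate is verifying that the extended cone property of $U$ descends to a \emph{uniform} cone condition on the rescaled annuli $\tilde A_j$, so that the classical Sobolev constants appearing in both the embedding and the multiplication theorems can be chosen independent of $j$. This is precisely why the definition is formulated in terms of the family $\phi_\epsilon(x) = x / \sigma(x)^{1-\epsilon}$ rather than via the ordinary cone property: it encodes the requirement that when $U$ is dyadically rescaled, the pieces fit together with uniform local geometry. Once that uniformity is in hand, the rest of the argument is routine bookkeeping of powers of $2$ and summation.
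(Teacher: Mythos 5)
The paper does not actually prove this lemma; it is stated with a citation to \cite{ChOM}, \cite{ChoCH}, and \cite{Ch1}. Your dyadic-decomposition-and-rescaling strategy is a standard and essentially correct route to weighted Sobolev estimates on asymptotically Euclidean domains, and your power-counting is right: after rescaling $A_j \to \tilde{A}_j$, the prefactor $2^{j(\delta'-\delta-n/2)}$ in (i) is bounded uniformly in $j$ precisely under $\delta' < \delta + n/2$, and the prefactor $2^{j(\delta-\delta_1-\delta_2-n/2)}$ in (ii) is bounded under $\delta < \delta_1 + \delta_2 + n/2$, after which the $\ell^\infty$--$\ell^1$ pairing and the finite overlap of the annuli close the product estimate. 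The Banach algebra statement drops out as you say.

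The gap, which you flag but do not resolve, is the link between the hypothesis and your decomposition. The approach in \cite{ChOM} uses the extended cone property in a fundamentally different way: one fixes a single $\epsilon$ adapted to $(s,\delta)$, pushes $u$ forward under the conformal map $\phi_\epsilon$ (with a Jacobian factor) to a function on $\phi_\epsilon(U)$, and applies classical Sobolev theory there, using only that $\phi_\epsilon(U)$ has the ordinary cone property. Your route instead requires that the rescaled dyadic pieces $\tilde{A}_j = 2^{-j}A_j$ satisfy a \emph{uniform} cone condition, and you assert that the extended cone property ``encodes'' this. But the definition says no such thing a priori: it is a statement about the images $\phi_\epsilon(U)$ under a global radial compression $r \mapsto r^\epsilon$, not about per-annulus dilations. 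Passing from ``$\phi_\epsilon(U)$ has the cone property for all $\epsilon \in (0,1]$'' to ``at a point at radius $r$ there is a cone of height comparable to $r$ inside $U$'' requires tracking the local magnification factor of $\phi_\epsilon$ (roughly $r^{\epsilon-1}$) and taking $\epsilon \to 0$; it is plausible but it is a real geometric argument, and it is exactly where the hypothesis does its work. You should either supply that deduction explicitly, or follow the reference route and apply classical Sobolev inequalities on $\phi_\epsilon(U)$ directly, which uses the hypothesis in the form it is given.
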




\noindent\textbf{Type 2 domain: manifold which is Euclidean at infinity.}

Let $(\Si, e)$ be an $n$ dimensional manifold which is Euclidean at infinity. Let $x=\{x^{i}\}$ be the local coordinates, where $\{x^{i}\}$ is the pull back of the standard coordinates from $\R^{n}\setminus B_{R}$ when restricted to $E_{i}$, and $e=ds^{2}_{0}=\sum_{i=1}^{n}(dx^{i})^{2}$ on $E_{i}$. Fix a point $O\in\Si_{int}$, and define a function on $\Si$ by
$$\si_{e}(x)=(1+d_{e}^{2}(x, O))^{1/2}.$$
Clearly $\si_{e}(x)$ is equivalent to $\si(x)=(1+|x|^{2})^{1/2}$ on each end $E_{i}$.

When we use $\Si$ to model an initial data set, the spacetime should have topology as a sub-domain of $\Si\times\R$. Using coordinates $(x^{i}, t)$ on $\Si\times\R$, it has a natural reference metric
\begin{equation}\label{e tilde}
\ti{e}=dt^{2}+e.
\end{equation}
For $\theta\in(0, 1]$, the \emph{boost region} $\Om_{\theta}$ is defined as,
\begin{equation}\label{Omega2}
\Om_{\theta}=\{(x, t)\in\Si\times\R:\ |t|\leq\theta\si_{e}(x)\}.
\end{equation}
On $\Om_{\theta}$, the distance function $d_{\ti{e}}(\cdot, O)$ is equivalent to $d_{e}(\cdot, O)$, so we can use $\si_{e}$ to define the weighted Sobolev space on $\Om_{\theta}$. Given a smooth tensor bundle $E\rightarrow \Si$ or $E\rightarrow \Om_{\theta}$ and $s\in\N$, $\de\in\R$.
\begin{definition}\label{def for weighted sobolev space2}
$C^{s}_{\de}(\Si)$ or $C^{s}_{\de}(\Om_{\theta})$ is the Banach space of $C^{s}$ sections $u: \Si\rightarrow E$, or $u: \Om_{\theta}\rightarrow E$, with finite norm
$$\|u\|_{C^{s}_{\de}(\Si(or\ \Om_{\theta}))}=\sup_{\Si(or\ \Om_{\theta})}\big\{\sum_{|\al|\leq s}\si_{e}^{\de+|\al|}|D^{\al}u|_{e(or\ \ti{e})}\big\}.$$
$H_{s, \de}(\Si)$ or $H_{s, \de}(\Om_{\theta})$ is the class of sections $u: \Si\rightarrow E$, or $u: \Om_{\theta}\rightarrow E$ with weak derivatives up to order $s$, such that $\si_{e}^{\de+|\al|}D^{\al}u\in L^{2}(\Si, e)(or\ L^{2}(\Om_{\theta}, \ti{e}))$ for all $\al\leq s$. $H_{s, \de}(\Si)$  or $H_{s, \de}(\Om_{\theta})$ is a Hilbert space with inner product:
$$\lan u_{1}, u_{2}\ran_{H_{s, \de}(\Si)(or\ H_{s, \de}(\Om_{\theta}))}=\sum_{|\al|\leq s}\lan\si_{e}^{\de+|\al|}D^{\al}u_{1}, \si_{e}^{\de+|\al|}D^{\al}u_{2}\ran_{L^{2}(\Si, e)(or L^{2}(\Om_{\theta}, \ti{e}))}.$$
Then the norm is: $\|u\|_{H_{s, \de}(\Si)(or\ H_{s, \de}(\Om_{\theta}))}=\lan u, u\ran^{1/2}_{H_{s, \de}(\Si)(or\ H_{s, \de}(\Om_{\theta}))}$.
\end{definition}
\begin{remark}
In fact, the definitions are independent of the choice of $e$ on $\Si_{int}$.
\end{remark}

\begin{lemma}\label{properties of weighted sobolev case2}
\emph{(Lemma 2.4, 2.5 in \cite{ChoCH}, Appendex 1 in \cite{Cho})}
\vspace{-6pt}
\begin{itemize}
\addtolength{\itemsep}{-0.7em}
\item[$(i)$]\emph{(embedding)}. If $s^{\pr}<s-\frac{n}{2}$, $\de^{\pr}<\de+\frac{n}{2}$, the inclusion $H_{s, \de}(\Si)\subset C^{s^{\pr}}_{\de^{\pr}}(\Si)$ is continuous;

\item[$(ii)$]\emph{(multiplication)}. If $s\leq s_{1}, s_{2}$, $s<s_{1}+s_{2}-\frac{n}{2}$, $\de<\de_{1}+\de_{2}+\frac{n}{2}$, the multiplication $(f_{1}, f_{2})\rightarrow f_{1}f_{2}$ is a continuous map: $H_{s_{1}, \de_{1}}(\Si)\times H_{s_{2}, \de_{2}}(\Si)\rightarrow H_{s, \de}(\Si)$, hence $H_{s, \de}(\Si)$ is a Banach algebra if $s>\frac{n}{2}$, $\de>-\frac{n}{2}$. Furthermore,
\begin{equation}\label{multiplication inequality}
\|f_{1}f_{2}\|_{H_{s, \de}(\Si)}\leq C\|f_{1}\|_{H_{s_{1}, \de_{1}}(\Si)}\|f_{2}\|_{H_{s_{2}, \de_{2}}(\Si)},
\end{equation}
where $C$ is a constant depending only on $\{n, s_{1}, s_{2}, \de_{1}, \de_{2}\}$.
\end{itemize}
\end{lemma}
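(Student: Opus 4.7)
The plan is to reduce both claims to the Type 1 statements already recorded in Lemma \ref{properties of weighted sobolev space} by decomposing $\Si = \Si_{int} \cup (\bigcup_i E_i)$ and handling each piece separately. On the compact interior $\Si_{int}$, the weight $\si_e$ is bounded above and below by positive constants, so $H_{s, \de}(\Si_{int})$ and $C^{s^\pr}_{\de^\pr}(\Si_{int})$ coincide (with equivalent norms) with the standard Sobolev and $C^{s^\pr}$ spaces on a compact manifold with boundary, and the classical Sobolev embedding and multiplication inequalities handle this piece. On each end $E_i$ the coordinates $\{x^j\}$ identify $E_i$ with $\R^n \setminus B_R$, and $\si_e$ is equivalent to $\si(x) = (1+|x|^2)^{1/2}$, so $H_{s, \de}(E_i)$ and $C^{s^\pr}_{\de^\pr}(E_i)$ are isomorphic to the corresponding Type 1 spaces. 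Since $\R^n \setminus B_R$ has the extended cone property, Lemma \ref{properties of weighted sobolev space} applies directly on each end.

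To glue, fix a partition of unity $\{\chi_0, \chi_1, \dots, \chi_N\}$ subordinate to an open cover in which $\chi_0$ is compactly supported in a neighborhood of $\Si_{int}$, each $\chi_i$ ($i \geq 1$) is supported in $E_i$, and the supports of $\chi_i$ and $\chi_j$ are disjoint for distinct $i, j \geq 1$. For any $u \in H_{s, \de}(\Si)$ the definition of the weighted norm combined with the smoothness of the $\chi_i$ yields
$$\|u\|_{H_{s, \de}(\Si)} \;\sim\; \|\chi_0 u\|_{H_{s, \de}(\Si_{int})} + \sum_{i \geq 1} \|\chi_i u\|_{H_{s, \de}(E_i)},$$
with an analogous decomposition for the $C^{s^\pr}_{\de^\pr}$ norm. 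Applying the local estimates on each patch and summing establishes the continuous embedding in $(i)$. For the multiplication bound in $(ii)$, expand $u_1 u_2 = \sum_{i, j} (\chi_i u_1)(\chi_j u_2)$; all cross terms with $i \neq j$ and both $\geq 1$ vanish by disjointness of supports, cross terms involving $\chi_0$ live on a compact overlap where the weights are pinched, and the diagonal terms $(\chi_i u_1)(\chi_i u_2)$ are controlled by Lemma \ref{properties of weighted sobolev space} applied on $E_i$ (respectively on $\Si_{int}$ for $i=0$).

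The one technical nuisance is the appearance of derivatives $D^\al \chi_i$ in the bounded gluing annuli, but there $\si_e$ is pinched between positive constants, so these contributions are absorbed using the standard Sobolev inequality at the cost of a constant depending only on the fixed cutoff. The claim that the constant $C$ in (\ref{multiplication inequality}) depends only on $\{n, s_1, s_2, \de_1, \de_2\}$ then follows because the constants in Lemma \ref{properties of weighted sobolev space} depend only on those parameters together with the cone parameters of the domain, which are universal for $\R^n \setminus B_R$. The Banach algebra property when $s > \frac{n}{2}$ and $\de > -\frac{n}{2}$ is recovered as the special case $s = s_1 = s_2$, $\de = \de_1 = \de_2$ of $(ii)$, since the constraints $s < 2s - \frac{n}{2}$ and $\de < 2\de + \frac{n}{2}$ reduce exactly to these inequalities. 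The main obstacle is purely bookkeeping: once one verifies that the gluing annuli are compact and that $\phi_\ep$ preserves the cone property of a ball complement, the estimates on $\Si$ follow from the known $\R^n$ estimates patch by patch.
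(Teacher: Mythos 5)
The paper does not prove this lemma; it cites Choquet-Bruhat and Christodoulou \cite{ChoCH} (Lemmas 2.4, 2.5) and Choquet-Bruhat \cite{Cho} (Appendix 1), and those references establish the result by precisely the decomposition-and-patching argument you describe: reduce to the flat-$\R^n$ (Type 1) estimates of Lemma \ref{properties of weighted sobolev space} on each end, use classical compact-manifold Sobolev theory on $\Si_{int}$, and glue with a partition of unity whose derivatives fall on compact annuli where the weight is pinched. Your reconstruction is correct and takes essentially the same route as the cited sources (and the same pattern the paper itself later uses explicitly in the proof of Lemma \ref{L2 estimate for L}).
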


Divide $\Om_{\theta}$ as $\Om_{\theta}=(\Om_{\theta})_{int}\cup_{i=1}^{l}(\Om_{\theta})_{i}$, where $(\Om_{\theta})_{i}=\{(x, t)\in\Om_{\theta}:\ x\in E_{i}\}$, and $(\Om_{\theta})_{int}$ the compliment. Now $(\Om_{\theta})_{int}$ is a compact manifold, and $(\Om_{\theta})_{i}\subset\R^{n+1}$ satisfies the extended cone property in the above section, and hence Lemma \ref{properties of weighted sobolev space}. By working separately on $(\Om_{\theta})_{i}$ and $(\Om_{\theta})_{int}$ as in \cite{ChoCH} using Lemma \ref{properties of weighted sobolev space}, we have similar results,

\begin{lemma}\label{properties of weighted sobolev case3}
\begin{itemize}
\addtolength{\itemsep}{-0.7em}
\item[$(i)$]\emph{(embedding)}. If $s^{\pr}<s-\frac{n+1}{2}$, $\de^{\pr}<\de+\frac{n+1}{2}$, the inclusion is $H_{s+1, \de}(\Om_{\theta})\subset C^{s^{\pr}}_{\de^{\pr}}(\Om_{\theta})$ is continuous;

\item[$(ii)$]\emph{(multiplication)}. If $s\leq s_{1}, s_{2}$, $s<s_{1}+s_{2}-\frac{n+1}{2}$, $\de<\de_{1}+\de_{2}+\frac{n+1}{2}$, then the multiplication $(f_{1}, f_{2})\rightarrow f_{1}f_{2}$ is a continuous map: $H_{s_{1}, \de_{1}}(\Om_{\theta})\times H_{s_{2}, \de_{2}}(\Om_{\theta})\rightarrow H_{s, \de}(\Om_{\theta})$, hence $H_{s, \de}(\Om_{\theta})$ is a Banach algebra if $s>\frac{n+1}{2}$, $\de>-\frac{n+1}{2}$.
\end{itemize}
\end{lemma}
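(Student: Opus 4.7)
The plan is to follow the decomposition $\Om_\theta = (\Om_\theta)_{int} \cup \bigcup_{i=1}^{l}(\Om_\theta)_i$ laid out just before the lemma, and reduce both (i) and (ii) to the Type 1 Lemma \ref{properties of weighted sobolev space}, applied in ambient dimension $n+1$. On the interior piece $(\Om_\theta)_{int}$, which is precompact, the weight $\si_e$ is bounded above and below and $\ti{e}$ is uniformly equivalent to a Euclidean metric in any finite cover by coordinate charts, so the weighted norms are equivalent to the ordinary Sobolev and $C^s$ norms on a bounded Lipschitz domain of $\R^{n+1}$. The embedding and multiplication claims there are then immediate from classical Morrey embedding and the classical Sobolev product theorem.

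For each end piece $(\Om_\theta)_i$, I would use the product coordinates $(x^1,\dots,x^n,t)$ inherited from $E_i$ to identify $(\Om_\theta)_i$ with an open subset $U_i \subset \R^{n+1}$. Under this identification $\ti{e} = dt^2 + e$ is the Euclidean metric, and the defining inequality $|t| \leq \theta\si_e(x)$ forces $\si_e(x)$ and $\si(x,t) = (1+|x|^2+t^2)^{1/2}$ to be equivalent up to constants depending only on $\theta$; hence the intrinsic weighted norms on $(\Om_\theta)_i$ are comparable to those of Definition \ref{def for weighted sobolev space} on $U_i \subset \R^{n+1}$. The core step is to verify that $U_i$ satisfies the extended cone property: under $\phi_\ep(x,t) = (x,t)/\si(x,t)^{1-\ep}$, the boost slab $\{|t|\leq\theta\si_e(x)\}$ is mapped onto a set of uniformly bounded Lipschitz shape into which one can inscribe a cone of uniform opening at every point, for every $\ep \in (0,1]$. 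Once this is in place, Lemma \ref{properties of weighted sobolev space} in ambient dimension $n+1$ delivers (i) and (ii) on each $U_i$ with the dimensional threshold $\tfrac{n+1}{2}$.

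Finally, I would patch the interior and end estimates together with a smooth partition of unity subordinate to $\{(\Om_\theta)_{int}, (\Om_\theta)_1, \dots, (\Om_\theta)_l\}$. Multiplication by each cutoff preserves weighted Sobolev norms up to a universal constant, so summing the local inequalities yields the global embedding statement and a multiplication bound of the form (\ref{multiplication inequality}) on $\Om_\theta$; the Banach algebra corollary follows by taking $s = s_1 = s_2$ and $\de = \de_1 = \de_2$ in (ii).

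The main obstacle I anticipate is the uniform-in-$\ep$ verification of the extended cone property for the boost slab $U_i$. The slab widens linearly in $t$ as $|x|\to\infty$, so the straight cone property fails at infinity; the rescaling $\phi_\ep$ was designed precisely to compress this widening, but one still has to chase the uniformity of the cone opening through the nonlinear rescaling for all $\ep \in (0,1]$ simultaneously, and to handle the truncation $|x|\geq R$ inside each coordinate patch without losing the constants. Everything else --- the pointwise weight equivalence on the boost region, the transition via partition of unity, and the explicit constants in the multiplication inequality --- is routine once that geometric check is secured.
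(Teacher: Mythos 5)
Your proposal follows exactly the route the paper sketches in the paragraph preceding the lemma: split $\Om_{\theta}$ into the precompact piece $(\Om_{\theta})_{int}$ and the end pieces $(\Om_{\theta})_{i}\subset\R^{n+1}$, observe that each end piece has the extended cone property so Lemma \ref{properties of weighted sobolev space} applies in dimension $n+1$, and combine as in \cite{ChoCH}. The extra checks you flag (equivalence of $\si_{e}$ with $\si(x,t)$ on the boost slab, uniformity of the cone opening under $\phi_{\ep}$) are exactly the points the paper leaves implicit, so the proposal matches the intended argument.
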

Using ideas similar to the proof of Theorem 2.3 in \cite{ChOM} and Lemma \ref{properties of weighted sobolev case3}, we have
\begin{lemma}\label{composition2}
\emph{(composition)}. Given $\Om_{\theta}$, $\Om_{\theta^{\pr}}$ as above and $f: \Om_{\theta}\rightarrow\Om_{\theta^{\pr}}$ a differentiable map, such that $|Df|_{\ti{e}}\geq c>0$ and $f-id\in H_{s+1, \de-1}(\Om_{\theta})$ with $s>\frac{n+1}{2}$ and $\de>-\frac{n+1}{2}$, then for any $s^{\pr}\leq s+1$, $\de^{\pr}\in\R$, the composition $u\rightarrow u\circ f$ is an isomorphism as a map:
$$H_{s^{\pr}, \de^{\pr}}(f(\Om_{\theta}))\rightarrow H_{s^{\pr}, \de^{\pr}}(\Om_{\theta}).$$
\end{lemma}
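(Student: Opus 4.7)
The plan is to mimic the proof of Theorem 2.3 of \cite{ChOM}, adapted to the hybrid geometry of $\Om_\theta = (\Om_\theta)_{int} \cup \bigcup_i (\Om_\theta)_i$. On the compact interior $(\Om_\theta)_{int}$ the statement reduces to the classical Sobolev composition theorem once we know $f$ restricts to a bi-Lipschitz diffeomorphism there, which follows from $|Df|_{\ti e} \geq c$ and the $C^1$ embedding of $H_{s+1, \de-1}$. The real work lives on each end $(\Om_\theta)_i \subset \R^{n+1}$, which has the extended cone property so that the Euclidean theory of Lemma \ref{properties of weighted sobolev space} is available.

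First I would show that $f$ is a diffeomorphism onto its image with $\si_e \circ f \sim \si_e$. Since $s+1 > \frac{n+3}{2}$, Lemma \ref{properties of weighted sobolev case3}(i) gives $f - id \in C^1_{\de'}$ for some $\de' > 0$, so $|f(x) - x| = o(\si_e(x))$ and $|D(f - id)|_{\ti e}$ decays. Combined with $|Df|_{\ti e} \geq c$, this makes $f$ a proper local diffeomorphism on each end, hence a global diffeomorphism, and the comparability $\si_e(x) \sim \si_e(f(x))$ follows from the sublinearity of $f - id$. Moreover, using $Df^{-1} = (Df)^{-1}$ and the identity $f^{-1} - id = -(f - id)\circ f^{-1}$, one checks that $f^{-1}$ satisfies the same hypotheses (with adjusted $c$ and norm).

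Next, to bound $\|u\circ f\|_{H_{s^{\pr}, \de^{\pr}}(\Om_\theta)}$, I would apply the Fa\`a di Bruno chain rule: for $|\al| \leq s^{\pr}$,
\[
D^\al(u \circ f) = \sum_{\beta, \{\ga_j\}} C_{\al, \beta, \{\ga_j\}} \bigl( (D^\beta u) \circ f \bigr) \prod_{j} D^{\ga_j} f,
\]
where the sum runs over partitions with $|\beta| \leq |\al|$ and $\sum_j |\ga_j| = |\al|$. For $|\ga_j| \geq 1$ the factor $D^{\ga_j} f$ equals $D^{\ga_j}(f - id)$ (plus the identity when $|\ga_j| = 1$) and so lies in $H_{s + 1 - |\ga_j|, \, \de - 1 + |\ga_j|}$. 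Iterating the multiplication inequality of Lemma \ref{properties of weighted sobolev case3}(ii) bounds the product of these factors in an appropriate weighted space. Then changing variables $y = f(x)$ in the $L^2$ integral, using that $|\det Df|$ is bounded above and below and $\si_e(x) \sim \si_e(f(x))$, converts $\|(D^\beta u)\circ f\|$ norms into norms of $D^\beta u$ on $f(\Om_\theta)$. The outcome is
\[
\|u \circ f\|_{H_{s^{\pr}, \de^{\pr}}(\Om_\theta)} \leq C \|u\|_{H_{s^{\pr}, \de^{\pr}}(f(\Om_\theta))},
\]
with $C$ depending on $c$ and $\|f - id\|_{H_{s+1, \de-1}(\Om_\theta)}$. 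Running the same argument with $f^{-1}$ provides the reverse bound and hence the isomorphism.

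The main obstacle I anticipate is the bookkeeping of weight indices in the chain-rule sum: each summand must satisfy the precise relation $\de^{\pr} + |\al| < \sum_j (\de - 1 + |\ga_j|) + (\de^{\pr} + |\beta|) + (\text{tolerance } \tfrac{n+1}{2})$ demanded by iterated application of the multiplication lemma. The hypothesis $\de > -\tfrac{n+1}{2}$ is exactly the threshold that makes all these inequalities compatible, because it is what forces $f - id$ to be sublinear and $Df - I$ to decay, so that the loss of weight at each node of the partition is absorbed by the surplus $\tfrac{n+1}{2} + \de > 0$ from the embedding.
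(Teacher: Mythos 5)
Your proposal is correct and takes essentially the same route the paper indicates: the paper gives no written proof, deferring to Theorem 2.3 of \cite{ChOM} and Lemma \ref{properties of weighted sobolev case3}, and you are fleshing out exactly that — Fa\`a di Bruno on each end, the multiplication and embedding lemmas for $\Om_\theta$, change of variables using $\si_e \circ f \sim \si_e$ and boundedness of $\det Df$, plus the identity $f^{-1}-id = -(f-id)\circ f^{-1}$ for the inverse. One small slip: the embedding gives $f - id \in C^1_{\de'}$ only for $\de' < (\de-1)+\tfrac{n+1}{2}$, so under the stated hypothesis $\de > -\tfrac{n+1}{2}$ you can only guarantee some $\de' > -1$ rather than $\de' > 0$; this is harmless since $\de' > -1$ already yields $|f(x)-x|=o(\si_e(x))$ and $|D(f-id)| \lesssim \si_e^{-(\de'+1)} \to 0$, which is all your argument uses.
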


Define the function $\tau(x, t)=\frac{t}{\si_{e}(x)}$. Denote the level surface of $\tau$ by $\Si_{\tau}=\{(x, t)\in\Si\times\R:\ \tau(x, t)=\tau\}$. Then $\Om_{\theta}$ has a foliation $\Om_{\theta}=\cup_{\tau\in(-\theta, \theta)}\Si_{\tau}$. The restriction norm is defined as:
\begin{equation}\label{restriction norm2}
\|u\|_{H_{s, \de}(\Si_{\tau}, \Om_{\theta})}=\big(\sum_{k=0}^{s}\|D^{k}_{t}u|_{\Si_{\tau}}\|_{H_{s-k, \de+k}(\Si)}^{2}\big)^{1/2}.
\end{equation}
Using ideas similar to the proof of Lemma 3.1 in \cite{Ch1}, we can get,
\begin{lemma}\label{restriction}
\emph{(restriction)}. $\forall \tau\in(-\theta, \theta)$, we have the following continuous inclusion:
$$H_{s+1, \de}(\Om_{\theta})\subset H_{s, \de+\frac{1}{2}}(\Si_{\tau}, \Om_{\theta}),$$
for every $s\in\N$ and $\de\in\R$.
\end{lemma}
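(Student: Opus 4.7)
The plan is to reduce the restriction lemma to a single weighted one-dimensional trace inequality on the boost region and then apply that inequality term by term to each mixed derivative appearing in the definition of the restriction norm. The key ingredient is the following basic trace estimate for smooth $v$: for every $\de' \in \R$,
$$\int_\Si \si_e^{2\de'+1}(x) |v(x, \tau \si_e(x))|^2\, dx \;\leq\; C_\theta \int_{\Om_\theta} \bigl(\si_e^{2\de'} |v|^2 + \si_e^{2\de'+2} |\partial_t v|^2\bigr) dx\, dt,$$
with $C_\theta$ depending only on $\theta$. The weight gain of $1/2$ on the left reflects the fact that the time slab $[-\theta \si_e(x),\, \theta \si_e(x)]$ has length of order $\si_e(x)$.

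To establish this basic inequality I would fix $x$ and compare $|v(x, t_0)|^2$ at $t_0 = \tau \si_e(x)$ with $|v(x, t_1)|^2$ for an auxiliary $t_1$, via the fundamental theorem of calculus in $t$. Averaging over $t_1 \in [-\theta\si_e(x), \theta\si_e(x)]$ produces the bound
$$|v(x, t_0)|^2 \leq \frac{1}{2\theta \si_e(x)} \int_{-\theta \si_e(x)}^{\theta \si_e(x)} |v|^2\, dt + 2 \int_{-\theta \si_e(x)}^{\theta \si_e(x)} |v|\,|\partial_t v|\, dt,$$
where the factor $(\theta \si_e(x))^{-1}$ is precisely what drives the weight shift. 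Multiplying by $\si_e^{2\de'+1}(x)$, integrating over $\Si$, and applying Cauchy--Schwarz to the cross-term (splitting $\si_e^{2\de'+1} = \si_e^{\de'} \cdot \si_e^{\de'+1}$) yields the claimed inequality. On the interior part of $\Om_\theta$ the estimate is trivial by compactness, and on each end it is the calculation already carried out in Lemma 3.1 of \cite{Ch1}.

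With the basic inequality in hand I would apply it to $v = D_x^\beta D_t^k u$ with $\de' = \de + k + |\beta|$, for every pair $(\beta, k)$ with $|\beta| + k \leq s$. The two resulting terms on the right are the squared $L^2_{\de+k+|\beta|}(\Om_\theta)$ and $L^2_{\de+k+|\beta|+1}(\Om_\theta)$ norms of derivatives of $u$ of orders $k+|\beta|$ and $k+|\beta|+1$, respectively, both of which sit inside $\|u\|_{H_{s+1, \de}(\Om_\theta)}^2$. Summing over $|\beta| \leq s-k$ reconstructs $\|D_t^k u|_{\Si_\tau}\|^2_{H_{s-k, \de+k+1/2}(\Si)}$, and summing over $k = 0, \ldots, s$ reproduces the full restriction norm $\|u|_{\Si_\tau}\|^2_{H_{s, \de+1/2}(\Si_\tau, \Om_\theta)}$. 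A standard density argument for weighted Sobolev spaces (as in \cite{ChOM}\cite{Ch1}) extends the estimate from smooth $v$ to all $u \in H_{s+1, \de}(\Om_\theta)$.

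The main technical obstacle is the $x$-dependence of the $t$-slab defining $\Om_\theta$: one cannot decouple the $x$ and $t$ integrations, and it is the length $\sim \si_e(x)$ of that slab that converts a half-derivative loss in the classical flat trace theorem into the $1/2$ weight shift here. A minor secondary issue is that the restriction norm is expressed as Sobolev norms on $\Si$ via the graphical identification $(x, \tau\si_e(x)) \leftrightarrow x$ rather than via the induced metric on $\Si_\tau$; but because the graph $t = \tau\si_e(x)$ has slope controlled by $\theta < 1$ (so the Jacobian is two-sided bounded by constants depending on $\theta$), the two viewpoints are comparable and this causes no loss.
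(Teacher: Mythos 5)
Your proof is correct and takes the route the paper relies on: the paper defers to the proof of Lemma 3.1 in \cite{Ch1}, which is precisely this one-dimensional fundamental-theorem-of-calculus-plus-averaging trace estimate, where averaging over the $t$-slab of length $\sim\si_e(x)$ produces the $1/2$ weight gain and Cauchy--Schwarz handles the cross term. Applying the pointwise bound term by term to $D_x^\beta D_t^k u$ with $\de'=\de+k+|\beta|$ and summing over $(\beta,k)$ with $|\beta|+k\leq s$ reconstructs the restriction norm exactly as you describe.
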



\subsection{Geometry of hypersurface in Lorentzian space}

In this section, we will review the geometry of hypersurfaces in a Lorentzian space. We will mainly focus on the mean curvature of the hypersurface. Notation and part of the results here trace back to \cite{Ba1}, and all concepts of Lorentzian space can be found in \cite{W}. Let $(\V, \gamma)$ be a (3+1) dimensional Lorentzian space, with $\lan\cdot,\ \cdot\ran$ the metric pairing and $\nabla$ the connection. A smooth function $t\in C^{\infty}(\V)$ is called a \emph{time function} if $\nabla t$ is nonzero, and everywhere \emph{timelike}, i.e. $\lan\nabla t, \nabla t\ran <0$. We call a hypersurface $\Sigma$ \emph{spacelike} if the restriction of $\gamma$ to $\Sigma$ is Riemannian. In a local coordinate system $\{x^{i}, t\}$, where $t$ is a time function, the metric can be written as(See equation (2.12) of \cite{Ba1}):
\begin{equation}\label{metric form for gamma}
\gamma=-(\al^{2}-\be^{2})dt^{2}+2\be_{i}dx^{i}dt+g_{ij}dx^{i}dx^{j},
\end{equation}
where $\al$ is the lapse function, i.e. $\al^{2}=-\lan\nabla t, \nabla t\ran$, $g_{ij}$ a Riemmanian metric, and $\be=g^{ij}\be_{i}\partial_{j}$ the shift vector\footnote{See Chap 10 of \cite{W} for details.}. Here we use $\partial_{t}=\frac{\partial}{\partial t}$ and $\partial_{i}=\frac{\partial}{\partial x^{i}}$ as coordinate vectors. The inverse metric $\ga^{-1}$ is given by:
\begin{equation}\label{inverse for gamma}
\ga^{\mu\nu}=\begin{bmatrix}
-\frac{1}{\al^{2}} & \frac{\be^{i}}{\al^{2}}\\
\frac{\be^{j}}{\al^{2}} & g^{ij}-\frac{1}{\al^{2}}\be^{i}\be^{j}
\end{bmatrix},
\end{equation}
under coordinate system $\{t, x^{1}, x^{2}, x^{3}\}$.

We will denote the level surface of the time function $t$ by $\Sigma_{t}=\{p\in\V:\ t(p)=t\}$. Let $D$ be the gradient operator on $\Si_{t}$, and $div^{0}$ the divergence operator on $\Si_{t}$. The future-directed timelike unite normal $T$ of $\Si_{t}$ is given by\footnote{See Appendix \ref{appendix} for details.}:
\begin{equation}\label{unit normal for constant slice}
T=-\al\nabla t=\al^{-1}(\partial_{t}-\be),
\end{equation}
and the second fundamental form $k^{0}_{ij}$ and the mean curvature $H^{0}$ of the slice $\Si_{t}$ are given by,
\begin{equation}\label{2nd form of t-slice}
k^{0}_{ij}=\lan\partial_{i}, \nabla_{\partial j}T\ran=\frac{1}{2}\al^{-1}\partial_{t}g_{ij}-\frac{1}{2}\al^{-1}\mL_{\be}g_{ij},
\end{equation}
\begin{equation}
H^{0}=g^{ij}A^{0}_{ij}=\frac{1}{2}\al^{-1}g^{ij}\partial_{t}g_{ij}-\al^{-1}div^{0}(\be).
\end{equation}

Given a spacelike hypersurface $\Sigma$, we can always choose local coordinates $\{x^{i}, t\}$, such that $\Sigma$ is locally the $t=0$ level surface $\Sigma_{0}$. Given a smooth function $u\in C^{\infty}(\Sigma)$, we can study the graph of $u$, i.e. $Graph_{u}=\{(x^{i}, u(x))\}$ in local coordinates. So we call this $u$ the \emph{height function}. By extending $u$ parallel to $\V$ requiring that
\begin{equation}\label{extension of u}
\partial_{t}u=0,
\end{equation}
$Graph_{u}$ can be viewed as level surface of $(u-t)=0$. The unit normal of $Graph_{u}$ is\footnote{See Appendix \ref{appendix} for details.}:
\begin{equation}\label{mean curvature for level surface}
N=\nu(U+T),
\end{equation}
where
\begin{equation}\label{U}
U=\frac{\al Du}{1+\lan \be, Du\ran},\ \textrm{and}\ \ \nu=\frac{1}{(1-|U|_{g}^{2})^{1/2}}.
\end{equation}
So $Graph_{u}$ is spacelike iff $1-|U|_{g}^{2}>0$, i.e. $\nu$ well-defined. Define the canonical graphical diffeomorphism $F: \Sigma\rightarrow Graph_{u}$ by $F(x)=(x, u(t))$. Then $Graph_{u}$ has a local coordinate system $\{x^{i}:\ i=1, 2, 3\}$. The coordinate vector frame $\{\partial_{i}\}$ on $\Sigma$ is passed by $F$ to a local frame
\begin{equation}\label{local frame}
\al_{i}=\partial_{i}+u_{i}\partial_{t}:\ i=1, 2, 3,
\end{equation}
on $Graph_{u}$. Now denote $M=Graph_{u}$. Using this local coordinates, the restriction $\gamma|_{M}$ of $\gamma$ to $Graph_{u}$, denoting by $g_{M}=(g_{M})_{ij}dx^{i}dx^{j}$, is given by
\begin{equation}\label{graph metric}
(g_{M})_{ij}=g_{ij}+\be_{i}u_{j}+u_{i}\be_{j}-(\al^{2}-\be^{2})u_{i}u_{j}.
\end{equation}
Then the inverse metric matrix is calculated in the Appendix \ref{appendix} by equation (\ref{inverse graph metric calculation}) as:
\begin{equation}\label{inverse graph metric}
\begin{split}
(g_{M})^{ij} &=g^{ij}-\frac{1}{\al^{2}}\be^{i}\be^{j}+\frac{\nu^{2}}{\al^{2}}(\be-\al U)^{i}(\be-\al U)^{j}\\
             &=\ga^{ij}+\frac{\nu^{2}}{\al^{2}}(\be-\al U)^{i}(\be-\al U)^{j}.
\end{split}
\end{equation}
So the mean curvature $H_{u}$ of the graph $M$ is given by
\begin{equation}\label{Hu1}
H_{u}=(g_{M})^{ij}\lan\nabla_{\al_{i}}N, \al_{j}\ran_{\ga}.
\end{equation}


\section{Boost evolution}\label{boost evolution}

Fix a 3-manifold $(\Si, e)$, which is Euclidean at infinity. Let $\ti{e}=dt^{2}+e$ be the reference metric(\ref{e tilde}) on $\Si\times\R$. Given integer $s\geq 4$, and real number $\de>-2$, we consider vacuum constraint initial data sets $(\Si, g, k)$, such that $(g, k)\in\V_{s, \de+\frac{1}{2}}(\Si)$. Here boost evolution means that in the spacetime $(\V, \ga)$ which is evolved by (EVE) taking $(\Si, g, k)$ as initial data set, where $\V\subset\Si\times\R$, both the future and past temporal distance $\chi_{\pm}(x)$\footnote{See \cite{ChOM} for reference.} to the boundary of $\V$ is proportional to the space distance $\si(x)$ for $x\in\Si$, i.e. $\chi_{\pm}(x)\geq c\si(x)$ for $c>0$. We will extend the boost evolution on $\R^{3}$ in \cite{ChOM} to the case of $\Si$.

\subsection{Reduced Einstein equation and results on compact domain}

Let us review the reduction using harmonic gauge initially introduced by Y. Choquet-bruhat(see \cite{Cho}). Using $\{x^{i}: i=1, 2, 3\}$ as local coordinates on $\Si$, and $x^{\mu}=(x^{0}, x^{i})$, with $x^{0}=t$ as coordinates on $\V\subset\Si\times\R$, the Ricci curvature can be expressed as\footnote{See Sec. 4 of \cite{ChOM} and Chap. 10.2 of \cite{W}.}:
$$Ric^{\mu\nu}=R^{\mu\nu}_{h}+\frac{1}{2}(\ga^{\mu\al}D_{\al}\Ga^{\nu}+\ga^{\nu\al}D_{\al}\Ga^{\mu}),$$
where $\Ga^{\mu}_{\al\be}$ is the Christoffel symbol of $\ga$, $\Ga^{\mu}=\ga^{\al\be}\Ga^{\mu}_{\al\be}$, and
$$R^{\mu\nu}_{h}=\frac{1}{2}\{\ga^{\al\be}D_{\al}D_{\be}\ga^{\mu\nu}-B^{\mu\nu}(\ga, D\ga)\},$$
with $B^{\mu\nu}=P^{\mu\nu, \rho\si}_{\al\be, \ka\la}D_{\rho}\ga^{\al\be}D_{\si}\ga^{\ka\la}$, and $P$ is a rational function of $\ga^{\al\be}$. In fact, The Einstein vacuum equation $Ric_{\gamma}=0$ is a degenerated differential equation system due to its invariance under diffeomorphic transformations. \emph{Harmonic gauge} is used to fix this gauge freedom by Y. Choquet-bruhat, which means that we can choose $id: (\V, \ga)\rightarrow (\V, \ti{e})$ to be a wave map, i.e. $\Box_{(\ga, e)}id=0$\footnote{See Section 7.4 of Chap 6 in \cite{Cho}}. Denote
\begin{equation}
f^{\mu}=\Ga^{\mu}-\ga^{\al\be}\ti{\Ga}^{\mu}_{\al\be},
\end{equation}
to be the \emph{harmonic gauge vector}, where $\ti{\Ga}^{\mu}_{\al\be}$ the Christoffel symbol of $\ti{e}$. $f^{\mu}$ is the difference of two connections, hence a tensor, then harmonic gauge condition reduces to $f^{\mu}=0$, or:
\begin{equation}\label{harmonic coordinates}
\Box_{\gamma}x^{\mu}=-\ga^{\al\be}\ti{\Ga}^{\mu}_{\al\be},
\end{equation}
where $\Box_{\ga}$ is the Laplacian operator of the Lorentzian metric $\ga$, and $\Box_{\ga}x^{\mu}=-\Ga^{\mu}$.
Now under harmonic gauge (\ref{harmonic coordinates}), the (EVE)(\ref{EVE}) reduced to\footnote{See page 163 in \cite{Cho}.}
\begin{equation}\label{reduced EVE}
\ga^{\al\be}D_{\al}D_{\be}\ga^{\mu\nu}=B^{\mu\nu}(\ga, D\ga)+\frac{1}{2}\ga^{\al\be}\{\ga^{\mu\rho}\ti{R}^{\nu}_{\be\al\rho}+\ga^{\nu\rho}\ti{R}^{\mu}_{\be\al\rho}\},
\end{equation}
where $\ti{R}$ is the curvature of $\ti{e}$. The Cauchy data for these equations consist of:
\begin{equation}
\ga|_{\Si}=\phi,\ D_{t}\ga|_{\Si}=\psi.
\end{equation}
For given initial data set $(g, k)$, we need to construct Cauchy data $(\phi, \psi)$ by requiring $f^{\mu}|_{\Si}=(\Ga^{\mu}-\ga^{\al\be}\ti{\Ga}^{\mu}_{\al\be})|_{\Si}=0$. To fix the freedom in choosing a harmonic gauge, we require the coordinate system of $\V$ is Gaussian on $\Si$, which means:
\begin{equation}\label{initial data1}
\phi^{00}=-1,\ \phi^{0i}=0,\ \phi^{ij}=g^{ij}.
\end{equation}
The condition $(\Ga^{\mu}-\ga^{\al\be}\ti{\Ga}^{\mu}_{\al\be})|_{\Si}=0$ requires\footnote{See page 164 in \cite{Cho}.}:
\begin{equation}\label{initial data2}
\psi^{00}=-4tr_{g}k,\ \psi^{0i}=-(\Ga_{g}^{i}-g^{kj}\ti{\Ga}^{i}_{kj}),\ \psi^{ij}=2g^{ik}g^{jl}k_{kl}.
\end{equation}
Define a reference Lorentzian metric by
\begin{equation}
\ti{\eta}=-dt^{2}+e.
\end{equation}
When the initial data $(g-e, k)\in H_{s, \de+\frac{1}{2}}(\Si)\times H_{s-1, \de+\frac{3}{2}}(\Si)$, the Cauchy data (\ref{initial data1})(\ref{initial data2}) satisfy $(\phi-\ti{\eta}, \psi)\in H_{s, \de+\frac{1}{2}}(\Si)\times H_{s-1, \de+\frac{3}{2}}(\Si)$. In fact, by multiplication lemma \ref{properties of weighted sobolev case2}, $(g-e, k)\rightarrow(\phi-\ti{\eta}, \psi)$ is a continuous map $H_{s, \de+\frac{1}{2}}(\Si)\times H_{s-1, \de+\frac{3}{2}}(\Si)\rightarrow H_{s, \de+\frac{1}{2}}(\Si)\times H_{s-1, \de+\frac{3}{2}}(\Si)$.

To solve (EVE)(\ref{EVE}), we can first solve the reduced equation (\ref{reduced EVE}) by quasilinear theory(see Appendix 3 in \cite{Cho}, and Section 5 in \cite{ChOM}), and then show that the harmonic gauge is preserved. In fact, Bianchi identity and the reduced equation (\ref{reduced EVE}) imply that the harmonic gauge vector $f^{\mu}$ satisfies a linear equation\footnote{See page 167 in \cite{Cho} and Section 4 in \cite{ChOM}}:
\begin{equation}\label{harmonic gauge evolution}
\Box_{\ga}f^{\mu}+A(\ga, D\ga)Df=0.
\end{equation}
So we can use uniqueness of linear equations to show that $f^{\mu}\equiv 0$ since we chose $f^{\mu}|_{\Si}=0$, and the constraint equations (\ref{VCE}) implies that $\partial_{t}f^{\mu}|_{\Si}=0$\footnote{See page 167 in \cite{Cho}.}.\\

Now we summarize a local version of the existence and causal uniqueness theorem based on the interior region $\Si_{int}$ of $(\Si, e)$, which has dimension $n=3$. We can extend the interior region $\Si_{int}$ to contain the annuli $B_{2R}\setminus B_{R}$ of each end $E_{i}$ of $(\Si, e)$. Now define a causal set $(V_{int})_{\theta, \la}$ based on $\Si_{int}$ as follows:
\begin{equation}\label{Vint theta}
(V_{int})_{\theta, \la}=\{(x, t)\in \Si_{int}\times[-\theta, \theta]:\ |x|\leq 2R-\la|t|,\textrm{if $x\in E_{i}$}\},
\end{equation}
where $\theta\in(0, 1]$ and $\la\geq 2$ is a positive number. Now $(V_{int})_{\theta, \la}$ has a lateral boundary $L_{\theta, \la}=\{(x, t)\in(V_{int})_{\theta, \la}:\ |x|=2R-\la|t|\}$. When $\la$ is large enough depending only on $e$, $L^{+}_{\theta, \la}=L_{\theta, \la}\cap\{t\geq 0\}$(or $L^{-}_{\theta, \la}=L_{\theta, \la}\cap\{t\leq 0\}$) is spacelike and ingoing(or outgoing) w.r.t. $\ti{\eta}$, hence $((V_{int})_{\theta, \la}, \ti{\eta})$ is causal\footnote{See Definition 2.11 of Appendix 3 in \cite{Cho}.}.

Combining Theorem 7.4, Theorem 8.3 of Chap 6, and Corollary 4.8, Theorem 4.11, Theorem 4.13 of Appendix 3 in \cite{Cho}, and using a cutoff argument as in Theorem \ref{boost theorem}, we have the following well-known local existence and uniqueness theorem,
\begin{theorem}\label{evolution on compact set}
Given an integer $s\geq 4$. For a vacuum constraint data set $(\Si_{int}, g, k)$, with $(g-e, k)\in H_{s}(\Si_{int})\times H_{s-1}(\Si_{int})$, and $g\geq \la_{0}e$ for some $\la_{0}>0$, there exists $\theta>0$, $\la\geq 2$ and $C_{0}>0$ depending only on $\la_{0}$ and $\|g-e\|_{H_{s}(\Si_{int})}+\|k\|_{H_{s-1}(\Si_{int})}$, and a unique regularly sliced\footnote{See page 397 and page 585 in \cite{Cho}} Lorentzian metric $\ga$ solving the reduced EVE(\ref{reduced EVE}) on $(V_{int})_{\theta, \la}$, taking (\ref{initial data1})(\ref{initial data2}) as initial value which is given by $(g, k)$, such that $(\ga-\ti{\eta})\in H_{s}\big((V_{int})_{\theta, \la}\big)$, with $\|\ga-\ti{\eta}\|_{H_{s}((V_{int})_{\theta, \la})}\leq C_{0}$, and $L^{+}_{\theta, \la}$(or $L^{-}_{\theta, \la}$) is spacelike and ingoing(or outgoing) w.r.t $\ga$. Furthermore, $\ga$ is a solution of (EVE)(\ref{EVE}) under harmonic gauge.
\end{theorem}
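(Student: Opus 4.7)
The strategy is to reduce the problem on the fixed causal diamond $(V_{int})_{\theta,\la}$ to a standard quasilinear hyperbolic existence/uniqueness result and then propagate the harmonic gauge, following the textbook template in \cite{Cho}. First, from the constraint data $(g,k)$ I construct the Cauchy data $(\phi,\psi)$ for the reduced system (\ref{reduced EVE}) via (\ref{initial data1})(\ref{initial data2}). Because the interior region $\Si_{int}$, extended by the annuli $B_{2R}\setminus B_{R}$ in each end, is essentially a compact manifold with boundary, and because $g\geq\la_{0}e$ keeps $g^{ij}$ smoothly bounded, the classical multiplication lemma on $H_{s}(\Si_{int})$ gives $(\phi-\ti{\eta},\psi)\in H_{s}(\Si_{int})\times H_{s-1}(\Si_{int})$ with norm controlled only by $\la_{0}$ and $\|g-e\|_{H_{s}}+\|k\|_{H_{s-1}}$.

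Next I fix the geometry of the diamond. For the reference Lorentzian metric $\ti{\eta}=-dt^{2}+e$, the lateral boundary $L_{\theta,\la}=\{|x|=2R-\la|t|\}$ is spacelike and ingoing/outgoing as soon as $\la$ exceeds a threshold $\la_{\ast}(e)$, and I fix such a $\la$. Once I demand the solution obey $\|\ga-\ti{\eta}\|_{H_{s}((V_{int})_{\theta,\la})}\leq C_{0}$ with $C_{0}$ small, Sobolev embedding on this compact-type region makes $\ga$ both $C^{0}$-close to $\ti{\eta}$ and uniformly hyperbolic: $L^{\pm}_{\theta,\la}$ stays spacelike for $\ga$, $(V_{int})_{\theta,\la}$ is causal, and $\ga^{ij}$ is uniformly elliptic on constant-$t$ slices. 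These are exactly the hypotheses required by the quoted theorems.

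With those preparations the reduced system (\ref{reduced EVE}) is a quasilinear wave system in $u:=\ga-\ti{\eta}$ with principal part $\ga^{\al\be}D_{\al}D_{\be}u$ and a semilinear source rational in $(u,Du)$. It is directly handled by Theorem 7.4 (existence) and Theorem 8.3 (uniqueness) of Chap.~6 of \cite{Cho}, combined with Corollary 4.8 and Theorems 4.11, 4.13 of Appendix~3 there for the energy estimates on causal diamonds. The iteration scheme produces $\theta>0$ and $C_{0}$ depending only on $\la_{0}$ and the Cauchy-data norm, together with a regularly sliced solution $\ga$ on $(V_{int})_{\theta,\la}$ satisfying the claimed bound. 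A preparatory cutoff extension of $(\phi,\psi)$ outside the extended $\Si_{int}$ lets me apply the textbook result on an open neighbourhood; finite propagation speed then allows me to restrict back to the causal diamond without affecting the solution there or its norm.

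Finally I upgrade the solution of the reduced system to one of the full (EVE) in harmonic gauge. The formulas (\ref{initial data1})(\ref{initial data2}) are designed so that $f^{\mu}|_{\Si_{int}}=0$, and the vacuum constraints (\ref{VCE}) together with the reduced equation and the Bianchi identity force $\partial_{t}f^{\mu}|_{\Si_{int}}=0$. Since $f^{\mu}$ satisfies the linear homogeneous wave equation (\ref{harmonic gauge evolution}) and $(V_{int})_{\theta,\la}$ is causal for $\ga$, uniqueness for linear wave equations yields $f^{\mu}\equiv 0$, so $\ga$ solves (EVE) in harmonic gauge. The main obstacle is less any individual step than the bookkeeping to ensure that $\theta,\la,C_{0}$ depend \emph{only} on $\la_{0}$ and the initial Sobolev norms: this is exactly the reason for working on the fixed causal diamond tailored to $e$, so that the noncompact geometry of the ends plays no role in this local statement and its quantitative version can later be patched into the full boost construction.
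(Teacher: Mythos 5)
Your overall strategy matches the paper's, which here is essentially a citation to Choquet-Bruhat's book: construct Cauchy data from $(g,k)$ via (\ref{initial data1})(\ref{initial data2}), invoke the quasilinear hyperbolic existence/uniqueness theory on the causal diamond (Thm.~7.4, 8.3 of Chap.~6 and Cor.~4.8, Thm.~4.11, 4.13 of App.~3 in \cite{Cho}), use a cutoff/extension together with causal (finite-speed) uniqueness to localize, and then propagate the harmonic gauge through (\ref{harmonic gauge evolution}). That is exactly the paper's scheme, so I'll flag only one slip.

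Your justification of Condition~(4) is off: you write that by demanding $\|\ga-\ti{\eta}\|_{H_{s}((V_{int})_{\theta,\la})}\leq C_{0}$ with ``$C_{0}$ small'' you get $\ga$ $C^{0}$-close to $\ti{\eta}$ and hence $L^{\pm}_{\theta,\la}$ spacelike. But $C_{0}$ is not small here --- nothing bounds $\|g-e\|_{H_{s}(\Si_{int})}$ by a small constant, so already the initial slice data $\phi^{ij}=g^{ij}$ need not be $C^{0}$-close to $e^{ij}$, and neither can the evolved $\ga$ be close to $\ti{\eta}$. What the local theory actually gives is that $\ga$ is \emph{regularly sliced} with constants (lapse, shift, spatial ellipticity) controlled by $\la_{0}$ and the data norm; the spacelike/ingoing condition for $L^{\pm}_{\theta,\la}$ is then secured by choosing $\la$ large relative to those constants, as in the computation of Remark~\ref{remark of space like and ingoing}, not by smallness of $\ga-\ti{\eta}$. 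With that repair the argument goes through and coincides with the paper's.
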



\subsection{Boost evolution on manifold Euclidean at infinity}

We first modify the linear boost theory in \cite{ChOM} to the case based on an Euclidean end $E\cong\R^{n-1}\setminus B_{R}$. Let us fix a special type of boost regions. Denote $\bar{x}=(x^{1}, \cdots, x^{n-1})\in\R^{n-1}$, such that $x=(\bar{x}, t)\in\R^{n}$. Later on, we will denote the index for $t$-coordinates as $0$, while index for $\bar{x}$ as $i$ with $i=1, \cdots, n-1$. Let $\bar{\si}(\bar{x})=(1+|\bar{x}|^{2})^{1/2}$. For $\theta\in(0, 1/2]$, $\la\geq 2$ and a given end $E\cong \R^{n-1}\setminus B_{R}$, the boost region $V_{\theta, \la}$ based on $E$ is defined as:
\begin{equation}\label{Vtheta}
V_{\theta, \la}=\big\{(\bar{x}, t)\in\R^{n},\ \frac{|t|}{\bar{\si}(\bar{x})}<\theta,\  |\bar{x}|\geq R+\la|t|\big\},
\end{equation}
Define function $\tau$ as $\tau(x)=\frac{t}{\bar{\si}(\bar{x})}$. Then the level surface of $\tau$ is $E_{\tau}=\{x\in V_{\theta, \la}:\ \tau(x)\equiv\tau\}$. $V_{\theta, \la}$ has a foliation:
$$V_{\theta, \la}=\cup_{\tau\in(-\theta, \theta)}E_{\tau}.$$
The lateral boundary of $V_{\theta, \la}$ is defined as,
\begin{equation}\label{lateral boundary}
L_{\theta, \la}=\{(\bar{x}, t)\in V_{\theta, \la}:\ |\bar{x}|=R+\la|t|\}.
\end{equation}
Denote the upper part of $V_{\theta, \la}$ by $V_{\theta, \la}^{+}=\{(\bar{x}, t)\in V_{\theta, \la}:\ t\geq0\}$, then the boundary $\partial V^{+}_{\theta, \la}$ is constituted by $E$, $E_{\theta}$ and the upper lateral boundary $L^{+}_{\theta, \la}=L_{\theta, \la}\cap V^{+}_{\theta, \la}$. Similarly, we have $V^{-}_{\theta, \la}=\{(\bar{x}, t)\in V_{\theta, \la}:\ t\leq 0\}$ and lower lateral boundary $L^{-}_{\theta, \la}=L_{\theta, \la}\cap V^{-}_{\theta, \la}$. Clearly $V^{\pm}_{\theta, \la}$ and the slices $E_{\tau}$ satisfy the extended cone property in $\R^{n}$ and $\R^{n-1}$ respectively as in Section \ref{weighted sobolev space}, and hence satisfy Lemma \ref{properties of weighted sobolev space}.

We introduce a class of hyperbolic metrics on $V_{\theta, \la}$ using the foliation $\{E_{\tau}\}_{\tau\in(-\theta, \theta)}$. The function $\tau$ is in fact a time function on $(V_{\theta, \la}, \eta)$, where $\eta=-dt^{2}+\sum_{i=1}^{n-1}(dx^{i})^{2}$ is the Minkowski metric. Let $\tilde{n}_{\mu}$ be the unit future co-normal of the foliation $\{E_{\tau}:\ \tau\in(-\theta, \theta)\}$, given by
\begin{equation}\label{future normal}
\tilde{n}=\tilde{N}D\tau=\frac{1}{\sqrt{1-\tau^{2}|\bar{x}|^{2}\si^{-2}(\bar{x})}}(dt-\frac{\tau}{\si(\bar{x})}x^{i}dx^{i}),
\end{equation}
where $\tilde{N}$ is the lapse function for the foliation $\{E_{\tau}\}$, defined by: $\tilde{N}^{-2}=-\lan D\tau, D\tau\ran_{\eta}=\frac{1-\tau^{2}|\bar{x}|^{2}\si^{-2}(\bar{x})}{\si^{2}(\bar{x})}$. $\tilde{n}$ can be viewed as a standard calibration for the foliation $V_{\theta, \la}=\cup E_{\tau}$, which is used to define the ``regularity" of hyperbolicity. Denoting $|\cdot|$ as the standard Euclidean norm for tensors on $V_{\theta}$, we have\footnote{See also Definition 4.1 in \cite{ChOM}.}:

\begin{definition}\label{regular hyperbolicity}
A $C^{0}$ covariant symmetric 2-tensor $\ga^{\mu\nu}$ on $V_{\theta, \la}$ is called \emph{regularly hyperbolic}, if there exist positive numbers $a$, $b$ and $C$ such that in $V_{\theta, \la}$:
\vspace{-6pt}
\begin{itemize}
\addtolength{\itemsep}{-0.7em}
\item[(1)] $-\ga^{\mu\nu}\tilde{n}_{\mu}\tilde{n}_{\nu}\geq a$;

\item[(2)] for all tangent covectors $\zeta_{\mu}$ of $E_{\tau}$, i.e. $\ga^{\mu\nu}\zeta_{\mu}\tilde{n}_{\nu}=0$, we have $\ga^{\mu\nu}\zeta_{\mu}\zeta_{\nu}\geq b|\zeta|^{2}$;

\item[(3)] $|\ga|\leq C$;

\item[(4)] The upper(or lower) lateral boundary $L^{+}_{\theta, \la}$(or $L^{-}_{\theta, \la}$) is spacelike and ingoing(or out going) w.r.t. $\ga$, i.e. every timelike curve entering $V^{+}_{\theta, \la}$(or every timeline curve exiting $V^{-}_{\theta, \la}$) is past directed.
\end{itemize}
\vspace{-6pt}
The \emph{coefficient of regular hyperbolicity} of $\ga$ is defined as,
\begin{equation}\label{coefficient of regular hyperbolicity}
h=\max\{\frac{1}{a}, \frac{1}{b}, C\}.
\end{equation}
\end{definition}

\begin{remark}\label{remark of space like and ingoing}
Condition (4) implies that this type of $V_{\theta, \la}$ is a \emph{causal subset} based on $E$ w.r.t $\ga$\footnote{ See Definition 2.11 of Appendix 3 in \cite{Cho}.}. Here we briefly talk about the criterion for Condition (4) to be true. We mainly discuss the case $L^{+}_{\theta, \la}$, and $L^{-}_{\theta, \la}$ is similar. The defining function of $L^{+}$ is given by $l(\bar{x}, t)=\la t+R-|\bar{x}|$, so the normal co-vector of $L^{+}$ is given by $dl=\la dt-d\bar{r}$, where $\bar{r}=|\bar{x}|$. Now $dl=\la(dt-\frac{\tau\bar{r}}{\si(\bar{x})}d\bar{r})+(\la\frac{\tau\bar{r}}{\si(\bar{x})}-1)d\bar{r}=\la\sqrt{1-\tau^{2}|\bar{x}|^{2}\si^{-2}(\bar{x})}\ti{n}+(\la\frac{\tau\bar{r}}{\si(\bar{x})}-1)d\bar{r}$. So using the regularly hyperbolicity, we have $\ga(dl, dl)\leq \la^{2}(1-\theta^{2})\ga(\ti{n}, \ti{n})+\la(\theta\la-1)C\leq -a\la^{2}(1-\theta^{2})+C\la(\theta\la-1)<0$, when $\la$ is chosen large enough depending only on $a$ and $C$, hence on $h$.
\end{remark}
\begin{remark}\label{remark of regular hyperbolicity}
The set of regularly hyperbolic metrics on $V_{\theta, \la}$ is open in the space $C^{0}(V_{\theta, \la})$ of bounded continuous covariant symmetric 2-tensors. In fact, $\eta$ is regular hyperbolic with $a=1$, $b=1-\theta^{2}$ and $C=\sqrt{n}$, and $L_{\theta, \la}$ is space-like and ingoing w.r.t $\eta$ when $\la\geq 2$. Since the space-like and ingoing condition for $L_{\theta, \la}$ is an open condition, there exist a small $\epsilon>0$, depending only on $\theta$, $\la$ and $n$, such that any $C^{0}$ covariant symmetric 2-tensor $\ga$, with $|\ga-\eta|\leq\epsilon$, is regularly hyperbolic in $V_{\theta, \la}$.
\end{remark}

Now consider a family of linear differential operators of second order in $V_{\theta, \la}$:
\begin{equation}\label{weakly coupled linear hyperbolic operator}
Lu=\Si^{2}_{k=0}a_{k}\cdot D^{k}u,
\end{equation}
where $u$ and $Lu$ are $\R^{N}$ valued functions on $V_{\theta, \la}$, and $a_{k}$ are matrix valued functions. The following hypotheses are required to the existence theory:
\vspace{-6pt}
\begin{itemize}
\addtolength{\itemsep}{-0.7em}
\item \emph{Hypothesis} $(1)$(weak coupling and hyperbolicity)\label{hypothesis1}. $a_{2}=\ga Id,$ i.e. $(a_{2})^{\mu\nu I}_{J}=\ga^{\mu\nu}\de^{I}_{J}$, $\mu, \nu=0, \cdots, n-1$, $I, J=1, \cdots, N$, where $\ga$ is a regularly hyperbolic metric on $V_{\theta, \la}$.

\item \emph{Hypothesis} $(2)$(regularity)\label{hypothesis2}. There exist integers $s_{k}$ and real numbers $\de_{k}$, such that: $s_{k}>\frac{n}{2}+k-1,\ \de_{k}>2-k-\frac{n}{2}:\ 0\leq k\leq 2$, and (1): $a_{k}\in H_{s_{k}, \de_{k}}(V_{\theta, \la})$ for $k=0, 1$; (2) $\ga-\eta\in H_{s_{2}, \de_{2}}(V_{\theta, \la})$.
\end{itemize}

\begin{remark}
Now denote
\begin{equation}\label{s prime}
s^{\pr}=min_{0\leq k\leq 2}\{s_{k}\}+1,
\end{equation}
\begin{equation}\label{coefficient m}
m=\|\ga-\eta\|_{H_{s_{2}, \de_{2}}(V_{\theta, \la})}+\Si_{k=0}^{1}\|a_{k}\|_{H_{s_{k}, \de_{k}}(V_{\theta, \la})},
\end{equation}
\begin{equation}\label{coefficient mu}
\mu=\|\ga-\eta\|_{H_{s_{2}-1, \de_{2}+1/2}(E, V_{\theta, \la})}+\sum_{k=0}^{1}\|a_{k}\|_{H_{s_{k}-1, \de_{k}+1/2}(E, V_{\theta, \la})}.
\end{equation}
By the restriction Lemma \ref{restriction}, $\mu\leq c m$. Using the multiplication Lemma \ref{properties of weighted sobolev space}, the regularity hypothesis (2) implies that
$$L: H_{s+1, \de}(V_{\theta, \la})\rightarrow H_{s-1, \de+2}(V_{\theta, \la}),$$
is a continuous map for $1\leq s\leq s^{\pr}$ and $\de\in\R$.
\end{remark}

Then we have the existence and uniqueness theorem for linear systems.
\begin{theorem}\label{estimates and existence for linear system}
Let $L$ be a differential operator defined by (\ref{weakly coupled linear hyperbolic operator}) in $V_{\theta, \la}$, satisfying Hypotheses (1) and (2). Let $\be\in H_{s-1, \de+2}(V_{\theta, \la})$, $\phi\in H_{s, \de+\frac{1}{2}}(E)$ and $\psi\in H_{s-1, \de+\frac{3}{2}}(E)$, with $2\leq s\leq s^{\pr}$, $\de\in\R$. Then the Cauchy problem:
\begin{equation}\label{linear systems}
Lu=\be,\ \ u|_{\Si}=\phi,\ D_{t}u|_{\Si}=\psi,
\end{equation}
has a unique solution $u\in H_{s, \de}(V_{\theta, \la})$, and satisfies the estimates:
\begin{equation}\label{estimates for linear equation}
\|u\|_{H_{s, \de}(V_{\theta, \la})}\leq c\theta^{\frac{1}{2}}\big\{\|\phi\|_{H_{s, \de+\frac{1}{2}}(E)}+\|\psi\|_{H_{s-1, \de+\frac{3}{2}}(E)}+\|\be\|_{H_{s-1, \de+2}(V_{\theta, \la})}\big\},
\end{equation}
where $c$ is a continuous increasing function of $(\theta, h, m)$, and $h$, $m$ are defined by equations(\ref{coefficient of regular hyperbolicity})(\ref{coefficient m}) respectively.
\end{theorem}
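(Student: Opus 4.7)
The plan is to adapt the linear boost-region theory of \cite{ChOM} to the end-based region $V_{\theta,\la}$. The two main ingredients will be weighted energy estimates over the foliation $\{E_\tau\}_{\tau\in(-\theta,\theta)}$, and a truncation/approximation argument to produce existence from those a priori estimates. The hyperbolicity hypothesis on $a_2 = \ga\, Id$ decouples the system into $N$ scalar wave-type equations coupled through the lower-order terms $a_0, a_1$, so one can proceed by the standard stress-energy-tensor method and treat the lower-order coupling as a perturbation absorbed on the right via Gr\"onwall.

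For the basic ($s=2$) estimate, introduce the stress-energy tensor
\[
T^{\mu\nu}(u) = \ga^{\mu\al}\ga^{\nu\be}(D_\al u)(D_\be u)-\tfrac{1}{2}\ga^{\mu\nu}\ga^{\al\be}(D_\al u)(D_\be u),
\]
contract with a weighted multiplier $\bar\si^{2\de}\,D_t u$, and integrate over the slab $\bigcup_{\tau'\in[0,\tau]} E_{\tau'}$ using the divergence theorem. The divergence involves $\ga \,\be\cdot D_t u$ plus lower-order terms coming from $a_0, a_1$ and from commuting $D_t$ with the weight $\bar\si^{2\de}$; by the multiplication Lemma \ref{properties of weighted sobolev case3} and the regularity hypothesis (2), these lie in the right weighted $L^2$-space with norm controlled by $m$. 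Regular-hyperbolicity conditions (1) and (2) of Definition \ref{regular hyperbolicity} make the flux through $E_\tau$ coercive, dominating $\|u\|^2_{H_{1,\de}(E_\tau,V_{\theta,\la})}$; condition (4) makes the lateral flux across $L^\pm_{\theta,\la}$ have the favorable sign, so it is discarded. A Gr\"onwall argument in $\tau$ produces the slab estimate, and the factor $\theta^{1/2}$ arises from Cauchy-Schwarz applied to the temporal $L^2$-integral of $\be$ over the thin slab.

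To upgrade to general $s\le s'$, differentiate $Lu=\be$ iteratively and obtain $L(D^\al u)=D^\al\be + [L,D^\al]u$ for each multi-index $|\al|\le s-1$. The commutator is a sum of products of $D^j a_k$ with $D^l u$ for $j+l\le |\al|+k$, and the multiplication Lemma \ref{properties of weighted sobolev case3} combined with Hypothesis (2) places each term in $H_{s-1-|\al|, \de+|\al|+2}(V_{\theta,\la})$ with norm controlled by $m$ times a lower-order norm of $u$. Induction on $|\al|$ gives the full estimate (\ref{estimates for linear equation}). For existence, truncate to $V_{\theta,\la}\cap\{|\bar x|\le R_j\}$, mollify coefficients and data to be smooth, and solve the Cauchy problem on each truncated causal subset using the classical hyperbolic theory (Theorem 7.4, Chap.\ 6 of \cite{Cho}, i.e. the end-based analogue of Theorem \ref{evolution on compact set}); the spacelike-ingoing property of $L^\pm_{\theta,\la}$ from condition (4) means no lateral boundary data is required. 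The uniform a priori estimate lets us extract a weakly convergent subsequence whose limit is the desired solution in $H_{s,\de}(V_{\theta,\la})$. Uniqueness follows from the energy estimate applied to the difference of two solutions with zero data.

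The main technical obstacle is the careful bookkeeping of weights through the iterated energy estimates -- specifically, verifying that every commutator produced when $D^\al$ passes through the weight $\bar\si^{2\de}$ and through the coefficients $a_k$ lands in the right weighted space with norm controlled by $\mu$ or $m$. Because $\de$ is allowed to be an arbitrary real number, one cannot use decay of $\bar\si^{2\de}$ at spatial infinity to suppress boundary contributions at $|\bar x|\to\infty$, which is why the truncation-and-weak-limit step in the existence argument is essential. A secondary point is that the lateral-boundary condition (4) must be quantitative in $h$ (as in Remark \ref{remark of space like and ingoing}, which forces $\la$ to be chosen large depending only on $h$) so that the boundary flux can be dropped uniformly along the approximation sequence.
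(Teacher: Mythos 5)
Your proposal follows essentially the same route as the paper's: a weighted stress-energy-tensor/Gr\"onwall argument over the $\tau$-slabs for the basic ($s=2$) estimate, iterated differentiation of the equation together with the multiplication lemma for higher $s$, regular hyperbolicity (coercivity of the flux through $E_\tau$ and favorable sign of the lateral flux, with $\la$ chosen large depending on $h$), and an approximation argument (mollify/truncate, invoke the compact causal theory, pass to the limit via the uniform a priori bound) for existence. The paper relegates all this to Appendix A.2 (the tensor $T^{\mu\nu}=G^{\mu\nu\rho\si}D_\rho u\, D_\si u$, the momentum $P^\mu=T^{\mu\nu}\tilde n_\nu$ weighted by $\si^{2(\de+3/2)}$, Lemmas on first and higher-order estimates, then Theorem A.9).

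Two points you gloss over, one cosmetic and one substantive. The cosmetic one: your multiplier weight $\bar\si^{2\de}$ is not the weight that makes the flux produce exactly the restriction norm $\|\cdot\|_{H_{1,\de+1/2}(E_\tau,V)}$; the correct weight on the momentum vector is $\si^{2(\de+3/2)}$, and without this the $\theta^{1/2}$-bookkeeping in the slab integral does not close. The substantive one: the iterated energy estimate only controls $\|u\|_{H_{s,\de+1/2}(E_\tau,V)}$ in terms of the \emph{full} restriction norm at $\tau=0$, i.e. $\|u\|_{H_{s,\de+1/2}(E,V)}$, which contains the $t$-derivatives $D_t^p u|_E$ for $0\le p\le s$. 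Your claimed estimate (\ref{estimates for linear equation}) has only $\phi$, $\psi$, and $\be$ on the right-hand side, so one must separately show these higher $D_t^p u|_E$ are algebraically controlled by $\phi$, $\psi$ and $\be$ through the equation $Lu=\be$; this requires inverting $\ga^{00}$ on $E$ in the Banach algebra, which the paper handles in Lemma~\ref{reciprocal} and the lemma preceding Theorem~\ref{boost estimates for linear system}. Your induction step on $|\al|$ does not account for this when $\al$ contains time derivatives, so without it the proof does not reach the stated conclusion.
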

\begin{proof}
It follows from the energy estimates Theorem \ref{boost estimates for linear system} in Appendix \ref{Linear boost estimates}, and similar approximation argument as in the proof of Theorem 5.1 in \cite{Ch1} and Theorem 4.1 in \cite{ChOM}.
\end{proof}

Now we extend the existence theory for the boost problem in \cite{ChOM} to $\Si$. Let $\Om_{\theta}$ be the boost region based on $\Si$ as defined in (\ref{Omega2}). We will construct a solution to the reduced EVE(\ref{reduced EVE}) in $\Om_{\theta}$ under the harmonic gauge. We deal with the boost evolution separately on the interior region $\Si_{int}$ and on each end $E_{i}$. On compact set $\Si_{int}$, we can use Theorem \ref{evolution on compact set}. On each end $E$, we can complete the initial data $(g, k)|_{E}$ to $\R^{3}$ and apply the boost theory in \cite{ChOM} to get existence. Then we can cut off the solution in the causal set based on the end $E$ by our linear Theorem \ref{estimates and existence for linear system}. Causal uniqueness(see Corollary 4.8 of Appendix 3 in \cite{Cho}) tells us that the solutions we got based on $\Si_{int}$ and $E_{i}$'s match together to a global solution.

\begin{theorem}\label{boost theorem}
For $s\geq 4$, $\de>-2$. Given vacuum data $(g, k)\in\VC_{s, \de+\frac{1}{2}}(\Si)$, with $g\geq\la_{0} e$ for some $\la_{0}>0$, there exits a $\theta\in(0, 1)$ and a $C_{0}>0$ depending only on $\la_{0}$, $\|g-e\|_{H_{s, \de+\frac{1}{2}}(\Si)}+\|k\|_{H_{s-1, \de+\frac{3}{2}}(\Si)}$, and a unique Lorentzian metric $\ga$ solving the reduced EVE (\ref{reduced EVE}) on $\Om_{\theta}$, which has Cauchy data $(\phi, \psi)$ on  $\Si$ given by $(g, k)$ in (\ref{initial data1})(\ref{initial data2}), such that $(\ga-\ti{\eta})\in H_{s, \de}(\Om_{\theta})$, and $\|\ga-\ti{\eta}\|_{H_{s, \de}(\Om_{\theta})}\leq C_{0}$. Furthermore $\ga$ is the solution to EVE (\ref{EVE}) under harmonic gauge.
\end{theorem}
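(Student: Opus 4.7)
Following the sketch indicated right before the statement, the idea is to construct $\ga$ on $\Om_\theta$ by solving the reduced EVE \eqref{reduced EVE} separately on a (slightly enlarged) interior region and on each end, and then glue the pieces using causal uniqueness of hyperbolic systems. The proof breaks naturally into a compact part (which is the classical Choquet-Bruhat local existence) and an end part (which requires the boost theory of \cite{ChOM} localized in a causal subset), with the harmonic gauge verified at the end in a single pass.

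\textbf{Interior piece.} Enlarge $\Si_{int}$ to absorb the annulus $B_{2R}\setminus B_R$ of each end. Since the enlarged $\Si_{int}$ is compact and $(g-e, k)|_{\Si_{int}} \in H_s(\Si_{int}) \times H_{s-1}(\Si_{int})$ with norms controlled by the weighted norms of the hypothesis, Theorem \ref{evolution on compact set} produces a unique regularly sliced Lorentzian metric $\ga^{int}$ on the causal set $(V_{int})_{\theta_1, \la_1}$ of \eqref{Vint theta} solving the reduced EVE with Cauchy data \eqref{initial data1}--\eqref{initial data2}, with $\theta_1, \la_1, C_1$ depending only on $\la_0$ and the listed data.

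\textbf{End piece.} For each end $E \cong \R^3 \setminus B_R$, smoothly extend $(g-e, k)|_E$ to data $(\bar g - e, \bar k)$ on all of $\R^3$ by a cutoff that agrees with the original on $\R^3 \setminus B_{R+1}$; the extension lies in $H_{s, \de+\frac{1}{2}}(\R^3) \times H_{s-1, \de+\frac{3}{2}}(\R^3)$ with norm controlled by the weighted norm of the original data on $E$. The boost existence theory of \cite{ChOM}, which for self-containment can be redone via a Picard iteration driven by the linear Theorem \ref{estimates and existence for linear system}, then produces a solution $\ga^E$ of the reduced EVE on a boost region $V_{\theta_E, \la_E}$ as in \eqref{Vtheta}, with $(\ga^E - \ti{\eta}) \in H_{s, \de}(V_{\theta_E, \la_E})$ and bound depending only on the listed data.

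\textbf{Patching, harmonic gauge, and the main obstacle.} Set $\theta = \min\{\theta_1, \theta_E : E \text{ an end}\}$ and pick $\la$ large enough that the lateral boundaries of $(V_{int})_{\theta, \la}$ and each $V_{\theta_E, \la_E}$ are spacelike and ingoing/outgoing w.r.t.\ the constructed metrics; by Remark \ref{remark of space like and ingoing} this is possible because the coefficient of regular hyperbolicity is uniformly controlled via Lemmas \ref{properties of weighted sobolev case2}--\ref{properties of weighted sobolev case3}. On each overlap $(V_{int})_{\theta, \la} \cap V_{\theta_E, \la_E} \subset E \times \R$, the pieces $\ga^{int}$ and $\ga^E$ satisfy the same reduced EVE with identical Cauchy data, so causal uniqueness (Corollary 4.8 of Appendix 3 in \cite{Cho}) forces them to agree there; the union defines a single metric $\ga$ on $\Om_\theta$ with $\|\ga - \ti{\eta}\|_{H_{s, \de}(\Om_\theta)} \leq C_0$ by summing local bounds. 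For the harmonic gauge, the gauge vector $f^\mu$ obeys the linear wave equation \eqref{harmonic gauge evolution}, and the choice \eqref{initial data1}--\eqref{initial data2} together with the VCE \eqref{VCE} on $(g,k)$ yield $f^\mu|_\Si = \partial_t f^\mu|_\Si = 0$, so the uniqueness half of Theorem \ref{estimates and existence for linear system} gives $f^\mu \equiv 0$ and hence $\ga$ solves \eqref{EVE}. The main technical obstacle is precisely in the end construction: the cutoff data $(\bar g, \bar k)$ need not satisfy the VCE on the region where it differs from $(g,k)$, so a priori $\partial_t f^\mu$ could be nonzero somewhere on $\R^3 \times \{0\}$. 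The remedy is that inside the causal set $V_{\theta_E, \la_E}$ based on $E$, the solution depends only on the untouched data on $E$, where the VCE do hold; causal uniqueness for \eqref{harmonic gauge evolution} then forces $f^\mu = 0$ throughout $V_{\theta_E, \la_E}$. Making all the causal and Sobolev estimates combine uniformly across the finitely many ends and the interior, with $\theta$ and $C_0$ depending only on $\la_0$ and the listed weighted norms, is the key technical point.
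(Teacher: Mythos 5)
Your proposal is correct and follows essentially the same route as the paper's proof: extend the data on each end to all of $\R^3$ by cutoff, invoke the boost theory of \cite{ChOM} and restrict to the causal set $V_{\theta, \la}$ based on the end, combine with the compact result Theorem \ref{evolution on compact set} on the enlarged interior via causal uniqueness (Corollary 4.8 of Appendix 3 in \cite{Cho}), and verify the harmonic gauge via the linear equation \eqref{harmonic gauge evolution} and Theorem \ref{estimates and existence for linear system}. You correctly identify the key subtlety — that the cutoff data fails the constraints off $E$, so the gauge vanishing must be argued inside the causal set where only the untouched data matters — which is precisely the point the paper makes; the only cosmetic difference is that the paper runs an explicit energy estimate to show the restricted end solution is independent of the chosen extension, whereas you fold that into the blanket appeal to causal uniqueness, which amounts to the same thing.
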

\begin{proof}
We first focus on a fixed end $E$. In fact, we can extend $(g, k)|_{E}$ to $(\bar{g}, \bar{k})$ on $\R^{3}$ by a cut and paste method, such that $(\bar{g}, \bar{k})=(g, k)$ on $E$ with $\bar{g}\geq\bar{\la}\de$, where $\bar{\la}\geq c^{-1}\la_{0}$ and $\|\bar{g}-\de\|_{H_{s, \de+\frac{1}{2}}(\R^{3})}+\|\bar{k}\|_{H_{s-1, \de+\frac{3}{2}}(\R^{3})}\leq c(\|g-e\|_{H_{s, \de+\frac{1}{2}}(E)}+\|k\|_{H_{s-1, \de+\frac{3}{2}}(E)})$ for some fixed $c>1$. By Lemma 5.1 and Theorem 6.1 in \cite{ChOM}, there exist $C_{1}>0$ and $\theta_{1}\in(0, 1)$ depending only on $\bar{\la}$ and $\|\bar{g}-\de\|_{H_{s, \de+\frac{1}{2}}(\R^{3})}+\|\bar{k}\|_{H_{s-1, \de+\frac{3}{2}}(\R^{3})}$, and a unique solution $\bar{\ga}$ to the reduce EVE (\ref{reduced EVE}) on $\Om_{\theta_{1}}$, taking on $\R^{3}$ the Cauchy data $(\bar{\phi}, \bar{\psi})$ given by $(\bar{g}, \bar{k})$ as in (\ref{initial data1})(\ref{initial data2}) where the Christoffel symbol for $\R^{3}$ is $\ti{\Ga}|_{\R^{3}}=0$, and $\|\bar{\ga}-\eta\|_{H_{s, \de}(\Om_{\theta_{1}})}<C_{1}$. Here $\Om_{\theta_{1}}$ is the boost region (\ref{Omega2}) when $\Si=R^{3}$. Furthermore, $\bar{\ga}$ is regularly hyperbolic\footnote{Here regularly hyperbolicity is given in Definition 4.1 in \cite{ChOM}, which only requires the first three conditions in Definition  \ref{regular hyperbolicity}.}, with the coefficient of regularly hyperbolicity $h_{1}$ depending only on  $\bar{\la}$ and $\|\bar{g}-\de\|_{H_{s, \de+\frac{1}{2}}(\R^{3})}+\|\bar{k}\|_{H_{s-1, \de+\frac{3}{2}}(\R^{3})}$.

We claim that there exists a $\la_{1}>2$ depending only on $h_{1}$, such that $\bar{\ga}$ is regularly hyperbolic on $V_{\theta_{1}, \la_{1}}$. The first three conditions in Definition \ref{regular hyperbolicity} are naturally satisfied since $\bar{\ga}$ is regularly hyperbolic in $\Om_{\theta_{1}}$(See Definition 4.1 in \cite{ChOM}). Condition (4) is true if we take take $\la_{1}$ large enough depending only on the regularly hyperbolicity $h_{1}$ of $\bar{\ga}$ as discussed in Remark \ref{remark of space like and ingoing}.

Then we claim that $\bar{\ga}$ is a solution of (EVE)(\ref{EVE}) in harmonic gauge inside the causal set $V_{\theta_{1}, \la_{1}}$. In fact, since $(g, k)$ is a solution of (VCE)(\ref{VCE}) on $E$, the harmonic gauge condition $f^{\mu}=\Ga^{\mu}_{\bar{\ga}}=0$ and $\partial_{t}f^{\mu}=0$ on $E$ are satisfied by the choice of initial conditions (\ref{initial data1})(\ref{initial data2}). Notice that $f$ satisfies a linear equation (\ref{harmonic gauge evolution}), which satisfies the requirement of Theorem \ref{estimates and existence for linear system} by argument on page 293 in \cite{ChOM}. Hence the harmonic gauge vector $f=0$ on $V_{\theta_{1}, \la_{1}}$ by the estimate (\ref{estimates for linear equation}) in Theorem \ref{estimates and existence for linear system}, hence $\bar{\ga}$ is a solution of EVE (\ref{EVE}) on $V_{\theta_{1}, \la_{1}}$.

Now denote the restriction $\bar{\ga}$ to $V_{\theta_{1}, \la_{1}}$ by $\ga$. We claim that $(V_{\theta_{1}, \la_{1}}, \ga)$ is uniquely determined by $(g, k)|_{E}$ when $\ga$ is regularly hyperbolic on $V_{\theta_{1}, \la_{1}}$.
Suppose $\ga_{1}$ and $\ga_{2}$ are two such solutions of reduced EVE (\ref{reduced EVE}) as above with initial value given by (\ref{initial data1})(\ref{initial data2}) from vacuum data $(g_{1}, k_{1})$ and $(g_{2}, k_{2})$ respectively. Then $\|\ga_{i}-\eta\|_{H_{s, \de}(V)}$ are uniformly bounded by the corresponding norm of $(g_{i}-\eta, k_{i})$. Now subtract the reduced EVE (\ref{reduced EVE}) satisfied by $\ga_{1}$ and $\ga_{2}$:
\begin{equation}\label{subtraction of ga1 and ga2}
\ga_{1}^{\al\be}D_{\al}D_{\be}(\ga_{1}^{\mu\nu}-\ga_{2}^{\mu\nu})-(D^{2}\ga_{2})(\ga_{2}-\ga_{1})-\big(B(\ga_{1}, D\ga_{1})-B(\ga_{2}, D\ga_{2})\big)=0,
\end{equation}
where (see equations (4.4)(4.5) in \cite{ChOM})
$$B(\ga_{1}, D\ga_{1})-B(\ga_{2}, D\ga_{2})=P(\ga_{1})(D\ga_{1})^{2}-P(\ga_{2})(D\ga_{2})^{2}$$
$$=(P(\ga_{1})-P(\ga_{2}))(D\ga_{1})^{2}+P(\ga_{2})(D\ga_{1}+D\ga_{2})(D\ga_{1}-D\ga_{2}).$$
Here $P$ is a rational function of $\ga$. Using the multiplication lemma \ref{properties of weighted sobolev space},  $(D\ga_{1})^{2},\ P(\ga_{2})(D\ga_{1}+D\ga_{2})\in H_{s-1, \de+1}(V)$. Using the mean value inequality, and the Soblev embedding lemma \ref{properties of weighted sobolev space}, we have the pointwise estimates:
$$|P(\ga_{1})-P(\ga_{2})|\leq C|\ga_{1}-\ga_{2}|,$$
where $C$ depends only on $\|\ga_{i}-\eta\|_{H_{s, \de}(V)}$, $i=1, 2$.
Now viewing equation (\ref{subtraction of ga1 and ga2}) as a differential equation for $(\ga_{1}-\ga_{2})$, and using the first energy estimate Lemma \ref{fundamental energy estimates2} in Appendix \ref{Linear boost estimates}, we have
$$\|\ga_{1}-\ga_{2}\|_{H_{1, \de+\frac{1}{2}}(E_{\tau}, V)}\leq C\|\ga_{1}-\ga_{2}\|_{H_{1, \de+\frac{1}{2}}(E, V)}\leq C(\|g_{1}-g_{2}\|_{H_{1, \de+\frac{1}{2}}(E)}+\|k_{1}-k_{2}\|_{H_{0, \de+\frac{3}{2}}(E)}).$$
Hence the uniqueness is true.

Combing all the above, we get a unique regularly hyperbolic solution $\ga$ to the (EVE) under harmonic gauge on $V_{\theta_{1}, \la_{1}}$, where $\theta_{1}$, $\la_{1}$ and $\|\ga-\eta\|_{H_{s, \de}(V)}$\footnote{The bound for $(\ga-\eta)$ also comes directly by Theorem \ref{estimates and existence for linear system}.} depend only on $\la_{0}$ and $\|g-e\|_{H_{s, \de+\frac{1}{2}}(E)}+\|k\|_{H_{s-1, \de+\frac{3}{2}}(E)}$.

Now extend $\Si_{int}$ to include annuli $B_{r}\setminus B_{R}\subset E_{i}$, and take the solution $\ga$ inside the causal set $(V_{int})_{\theta_{0}, \la_{0}}$ based on $\Si_{int}$ by Theorem \ref{evolution on compact set}. We can combine it with all the solutions $(V_{\theta_{i}, \la_{i}}, \ga)$ on each end $E_{i}$. Now causal uniqueness(See Corollary 4.8 of Appendix 3 in \cite{Cho}) implies that they coincide in the intersection of $(V_{int})_{\theta_{0}, \la_{0}}$ and $V_{\theta_{i}, \la_{i}}$, since $(V_{int})_{\theta_{0}, \la_{0}}\cap V_{\theta_{i}, \la_{i}}$ is a causal set based on $\Si_{int}\cap E$ w.r.t. $\ga$ by our construction. So by choosing the smallest $\theta$, such that $\Om_{\theta}\subset (V_{int})_{\theta_{0}, \la_{0}}\cup_{i=1}^{l}V_{\theta_{i}, \la_{i}}$, we get the conclusion.
\end{proof}


\section{Perturbation method}\label{perturbatoin method}

Here we will apply the Inverse Function Theorem(See \cite{Cha}\cite{R}) to get maximal graphs in the spacetime evolution of given AF vacuum data sets with small trace. Fix a 3-manifold $(\Si, e)$ which is Euclidean at infinity. We always assume $s\in\N$, $s\geq 4$, and $\de>-2$. Consider the vacuum data sets $(\Si, g, k)$, with $(g, k)\in\VC_{s+1, \de+\frac{1}{2}}(\Si)$. Let $(\V, \ga)$ be the boost evolution of $(g, k)$ given by Theorem \ref{boost theorem}, then we will study the graph of given function $u$ in the spacetime $(\V, \ga)$. We will take $(g, k)$ as parameters, and study the perturbation problem for the mean curvature $H_{u}$ of this graph. We will show that for appropriately chosen weighted Sobolev spaces, the linearization of $H_{u}$ with respect to $u$ is invertible in certain sense.

\subsection{Differentiability of mean curvature operator}

Given a vacuum data set $(g, k)\in\VC_{s+1, \de+\frac{1}{2}}(\Si)$, with $g\geq \la e$ for some $\la>0$. By Theorem \ref{boost theorem}, there exists a uniform $\theta\in(0, 1)$ and a uniform $C>0$, depending only on $\la$ and $\|g-e\|_{H_{s+1, \de+\frac{1}{2}}(\Si)}+\|k\|_{H_{s, \de+\frac{3}{2}}(\Si)}$, and a unique Lorentzian solution $\ga$ of the reduced EVE (\ref{reduced EVE}) on $\Om_{\theta}$, taking $(g, k)$ as initial data, and $\|\ga-\ti{\eta}\|_{H_{s+1, \de}(\Om_{\theta})}\leq C$. Moreover, from the proof of Theorem \ref{boost theorem}, the regularly hyperbolic coefficient $h$ of $\ga$ in each boost end $V_{\theta_{i}, \la_{i}}$, and the regularly sliced coefficient\footnote{See the constant $N$, $A$ and $B$ in Definition 11.8 in page 397 in \cite{Cho}} of $\ga$ in $(V_{int})_{\theta_{0}, \la_{0}}$ are all uniformly bounded by a constant depending only on $\la$ and the norm of $(g, k)$. Hence the determinant of $\ga^{\mu\nu}$ is bounded away from $0$ by a constant depending only on $\la$ and the norm of $(g, k)$.

Now let us summarize some properties of metric components of $\ga$.
\begin{lemma}\label{sobolev bound for metric coefficient}
For $s\geq 3$, $\de>-2$. Given a $(3+1)$ Lorentz metric $\ga^{\mu\nu}$ of form (\ref{inverse for gamma}) in $\Om_{\theta}$ with $(\ga-\ti{\eta})^{\mu\nu}\in H_{s, \de}(\Om_{\theta})$, if the determinant $det(\ga^{\mu\nu})\leq-\ti{\la}$ for some $\ti{\la}>0$, then $(\ga-\ti{\eta})_{\mu\nu}$ lies in $H_{s, \de}(\Om_{\theta})$, and in the metric form (\ref{metric form for gamma})(\ref{inverse for gamma}) of $\ga$, the components
$(\al^{-2}-1), (\al-1), \be^{i}, \be_{i}, g^{ij}-e^{ij}, g_{ij}-e_{ij}$ all lie in $H_{s, \de}(\Om_{\theta})$. Furthermore, their norms are all bounded by a constant depending only on $\ti{\la}$ and $\|\ga-\ti{\eta}\|_{H_{s, \de}(\Om_{\theta})}$.
\end{lemma}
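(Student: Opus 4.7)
The plan is to extract each component of the ADM decomposition (\ref{metric form for gamma})--(\ref{inverse for gamma}) algebraically from the given $(\ga - \ti{\eta})^{\mu\nu}$ and then propagate regularity through these algebraic relations using the Banach-algebra structure of $H_{s, \de}(\Om_{\theta})$ together with a composition estimate. Here I use that the reference Lorentzian metric $\ti{\eta} = -dt^{2} + e$ has $\ti{\eta}^{00} = -1$, $\ti{\eta}^{0i} = 0$, $\ti{\eta}^{ij} = e^{ij}$.

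First, from (\ref{inverse for gamma}) I read directly $\al^{-2} - 1 = -(\ga - \ti{\eta})^{00}$, which already lies in $H_{s, \de}(\Om_{\theta})$ with the norm controlled by $\|\ga - \ti{\eta}\|_{H_{s, \de}(\Om_{\theta})}$. The Sobolev embedding Lemma \ref{properties of weighted sobolev case3}(i), valid since $s \geq 3 > (n+1)/2$ and $\de + 2 > 0$ with $n = 3$, places $(\ga - \ti{\eta})^{\mu\nu}$ in $C^{0}_{\de^{\pr}}(\Om_{\theta})$ for some $0 < \de^{\pr} < \de + 2$, so $\ga^{\mu\nu} \to \ti{\eta}^{\mu\nu}$ at infinity. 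Combined with $\det \ga^{\mu\nu} \leq -\ti{\la}$, this forces uniform pointwise bounds $c_{1} \leq \al^{-2} \leq c_{2}$ and uniform ellipticity of the spatial matrix $g^{ij}$, with constants depending only on $\ti{\la}$ and $\|\ga - \ti{\eta}\|_{H_{s, \de}(\Om_{\theta})}$.

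Next I invoke a Moser-type composition estimate: for any smooth $F$ with $F(0) = 0$ and any $u \in H_{s, \de}(\Om_{\theta}) \cap L^{\infty}$ whose range lies in a compact subset of the domain of $F$, iteration of the Banach-algebra property in Lemma \ref{properties of weighted sobolev case3}(ii) produces $F(u) \in H_{s, \de}(\Om_{\theta})$ with a norm estimate depending only on $\|u\|_{L^{\infty}}$ and $\|u\|_{H_{s, \de}}$. Applied to $u = \al^{-2} - 1$ with $F_{1}(y) = (1+y)^{-1/2} - 1$ and $F_{2}(y) = (1+y)^{-1} - 1$, this yields $\al - 1$ and $\al^{2} - 1$ in $H_{s, \de}(\Om_{\theta})$. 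Reading the $(0,i)$ entry of (\ref{inverse for gamma}) gives $\be^{i} = \al^{2}(\ga - \ti{\eta})^{0i}$, so multiplication (Lemma \ref{properties of weighted sobolev case3}(ii)) places $\be^{i} \in H_{s, \de}(\Om_{\theta})$; the $(i,j)$ entry then produces $g^{ij} - e^{ij} = (\ga - \ti{\eta})^{ij} + \al^{-2}\be^{i}\be^{j} \in H_{s, \de}(\Om_{\theta})$, again by multiplication.

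To reach the covariant components I invert the $3 \times 3$ symmetric positive-definite matrix $g^{ij}$: Cramer's rule writes $g_{ij} - e_{ij}$ as a smooth rational function of $g^{ij} - e^{ij}$ vanishing at the origin, with denominator $\det(g^{ij})$ uniformly bounded below from the previous step, so the composition estimate again gives $g_{ij} - e_{ij} \in H_{s, \de}(\Om_{\theta})$. Multiplication yields $\be_{i} = g_{ij}\be^{j} \in H_{s, \de}(\Om_{\theta})$, and a final algebraic substitution into (\ref{metric form for gamma}) places every entry of $(\ga - \ti{\eta})_{\mu\nu}$ in $H_{s, \de}(\Om_{\theta})$, with each norm tracked in terms of $\ti{\la}$ and $\|\ga - \ti{\eta}\|_{H_{s, \de}}$ through these finitely many operations. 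The main technical obstacle is formulating and justifying the Moser-type composition estimate in the weighted Sobolev scale $H_{s, \de}(\Om_{\theta})$, together with verifying that the uniform pointwise lower bound on $\al^{-2}$ and on the eigenvalues of $g^{ij}$ really does follow from the determinant hypothesis and the Sobolev decay; once that ingredient is in place, the remaining argument is purely algebraic.
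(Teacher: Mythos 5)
Your proposal follows essentially the same strategy as the paper: read the ADM components off $\ga^{\mu\nu}$ algebraically, use the Sobolev embedding to get pointwise control, and propagate $H_{s,\de}$ regularity through products and reciprocals via the Banach-algebra property (Lemma \ref{properties of weighted sobolev case3}) — exactly the combination of Taylor expansion and iterated multiplication the paper uses for $\al-1$, and which it packages elsewhere as Lemma \ref{reciprocal}. The one structural difference is the route to the covariant components: you invert the $3\times3$ spatial matrix $g^{ij}$ by Cramer's rule, whereas the paper first inverts the full $4\times4$ matrix via $\ga_{\mu\nu}=adj(\ga^{\mu\nu})/\det(\ga^{\mu\nu})$ (so that $g_{ij}=\ga_{ij}$ and $\be_i=\ga_{0i}$ come for free and only one reciprocal, that of $\det\ga^{\mu\nu}$, is needed). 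The $4\times4$ route is marginally cleaner because the hypothesis is a bound on $\det\ga^{\mu\nu}$, not on $\det g^{ij}$; your route needs the extra observation $\det g^{ij}=-\al^2\det\ga^{\mu\nu}\geq\al^2\ti{\la}$. Both proofs rest on the uniform pointwise lower bound for $\al^{-2}$, which you rightly flag as the point requiring verification — the paper is equally terse there ("$\al^2$ is bounded both from below and above by certain constant"), so your treatment is on par with the original.
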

\begin{proof}
The inverse matrix $\ga_{\mu\nu}=det(\ga^{\mu\nu})adj(\ga^{\mu\nu})$, where $adj(\ga^{\mu\nu})$ is the adjoint matrix of $\ga^{\mu\nu}$. Since $det(\ga^{\mu\nu})$ is bounded away from $0$ by $\ti{\la}$, the Banach algebra property(Lemma \ref{properties of weighted sobolev case3}) of $H_{s, \de}(\Om_{\theta})$ implies that $\ga_{\mu\nu}-\ti{\eta}_{\mu\nu}$ also lies in $H_{s, \de}(\Om_{\theta})$, with $\|\ga_{\mu\nu}-\ti{\eta}_{\mu\nu}\|_{H_{s, \de}(\Om_{\theta})}$ bounded by a constant depending only on $\ti{\la}$ and $\|(\ga-\ti{\eta})^{\mu\nu}\|_{H_{s, \de}(\Om_{\theta})}$. From the expression (\ref{metric form for gamma})(\ref{inverse for gamma}) of $\ga$ and the fact that $(\ga-\ti{\eta})^{\mu\nu},\ (\ga-\ti{\eta})_{\mu\nu}\in H_{s, \de}(\Om_{\theta})$, we know that $(\al^{2}-1),\ (\frac{1}{\al^{2}}-1),\ \be^{i},\ \frac{\be^{i}}{\al^{2}},\ (g_{ij}-e_{ij}),\ (g^{ij}-\frac{\be^{i}\be^{j}}{\al^{2}}-e^{ij})\in H_{s, \de}(\Om_{\theta})$ with their norms bounded by $\|(\ga-\ti{\eta})\|_{H_{s, \de}(\Om_{\theta})}$. So $\al^{2}$ is bounded both from below and above by certain constant. By Taylor's expansion $|\al-1|=|\sqrt{1+(\al^{2}-1)}-1)|\leq C|\al^{2}-1|$, hence is $L^{2}_{\de}$ integrable. For higher order derivatives of $(\al-1)$, we can use the multiplication Lemma \ref{properties of weighted sobolev case3} and the bound of $(\al^{2}-1)$ to show that $D^{\mu}(\al-1)$ lies in $L_{s-|\mu|, \de+|\mu|}(\Om_{\theta})$. So $(\al-1)$ lies in $H_{s, \de}(\Om_{\theta})$ and has the norm bounded by a constant depending only on $\ti{\la}$ and $\|(\ga-\ti{\eta})^{\mu\nu}\|_{H_{s, \de}(\Om_{\theta})}$.
\end{proof}

So the metric coefficients of out boost solution $\ga$ satisfy that $\{(\al-1), \be^{i}, \be_{i}, g^{ij}-e^{ij}, g_{ij}-e_{ij}\}\in H_{s+1, \de}(\Om_{\theta})$ with norms bounded by a constant depending only on the elliptic constant $\la$ of $g$ and $\|g-e\|_{H_{s+1, \de+\frac{1}{2}}(\Si)}+\|k\|_{H_{s, \de+\frac{3}{2}}(\Si)}$. By the Soblev embedding $H_{s+1, \de}(\Om_{\theta})\subset C^{2}_{\ka}(\Om_{\theta})$ for some $0<\ka<\de+2$, all the terms above are uniformly bounded.

Given $s_{1}\geq 3$ and $\de_{1}>-2$. Let $\B_{\rho}$ be a ball of radius $\rho$ containing scalar functions in $H_{s_{1}+1, \de_{1}-\frac{1}{2}}(\Si)$ with $\|u\|_{H_{s_{1}+1, \de_{1}-\frac{1}{2}}(\Si)}\leq\rho$. We can choose $\rho$ small enough, such that after embedding $\|u\|_{C^{2}_{\ka}(\Si)}\leq C\rho\leq\theta/2$ for some $-1<\ka<\de_{1}+1$, and,
\begin{equation}\label{condition (A)}
\textrm{Condition (A):}\qquad \|u(x)\|_{L^{\infty}}\leq (\theta/2)(\si(x))^{-\ka}<(\theta/2)\si(x).
\end{equation}
So $Graph_{u}=\{(x, u(x)): x\in\Si\}$ is a submanifold in $\Om_{\theta}$. Furthermore, $|Du|_{e}\leq C\rho(\si(x))^{-(\ka+1)}$. As $(\al-1), \be, (g-e)$ are all uniformly bounded, we can then choose $\rho$ small enough satisfying:
\begin{equation}\label{condition (B)}
\textrm{Condition (B):}\quad |Du|_{e}\leq\frac{1}{100},\ |\lan\be, Du\ran_{g}|\leq\frac{1}{2},\ |U|=|\frac{\al|Du|_{g}}{1+\lan\be, Du\ran_{g}}|\leq\frac{1}{2},
\end{equation}
where $U$ is defined in (\ref{U}). Then $Graph_{u}$ is spacelike and $\nu=\sqrt{1-|U|^{2}}$ is well-defined. So we can study the operator
\begin{equation}\label{mean curvature operator}
\mH: u\rightarrow H_{u},
\end{equation}
where $H_{u}$ is the mean curvature of $Graph_{u}$ given by (\ref{Hu1}).

Now we will show that composition is continuous as follows,
\begin{lemma}\label{composition at x u}
Given $s_{1}\geq 3$, $\de_{1}>-2$ and $\theta\in(0, 1)$. Consider $\B_{\rho}\subset H_{s_{1}+1, \de_{1}-\frac{1}{2}}(\Si)$ with $\rho$ small enough satisfying Conditions (A) as above for the $\theta$. Then the composition map:
$$(f, u)\rightarrow \ti{f}=f(x, u(x)+t),$$
is a continuous map $H_{s^{\pr}, \de^{\pr}}(\Om_{\theta})\times \B_{\rho}\rightarrow H_{s^{\pr}, \de^{\pr}}(\Om_{\theta/2})$, for $s^{\pr}\leq s_{1}+1$ and $\de^{\pr}\in\R$. Furthermore, when restricted to $Graph_{u}$,
$$(f, u)\rightarrow f(x, u(x))$$
is a continuous map $H_{s^{\pr}, \de^{\pr}}(\Om_{\theta})\times \B_{\rho}\rightarrow H_{s^{\pr}-1, \de^{\pr}+\frac{1}{2}}(\Si, \Om_{\theta/2})$.
\end{lemma}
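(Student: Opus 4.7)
The plan is to realize the map $(f, u)\mapsto \ti{f}$ as composition with the diffeomorphism
\begin{equation*}
\Phi_{u}:\Om_{\theta/2}\to\Om_{\theta},\quad \Phi_{u}(x, t)=(x, u(x)+t),
\end{equation*}
and then to invoke the composition Lemma \ref{composition2}. First I would check that $\Phi_{u}$ does map $\Om_{\theta/2}$ into $\Om_{\theta}$: for $(x, t)\in\Om_{\theta/2}$, Condition (A) gives $|u(x)+t|\leq |u(x)|+|t|\leq(\theta/2)\si_{e}(x)+(\theta/2)\si_{e}(x)=\theta\si_{e}(x)$. The Jacobian is $D\Phi_{u}=I+du\otimes\partial_{t}$, so Condition (B), together with the smallness of $\rho$, makes $|D\Phi_{u}|_{\ti{e}}$ bounded below by a uniform positive constant.

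Next I would identify $\Phi_{u}-id$ with the function $\hat{u}(x, t)=u(x)$ on $\Om_{\theta/2}$ and keep track of weights. A direct computation, using that $t$ is integrated over an interval of length $\sim \si_{e}(x)$ in $\Om_{\theta/2}$ and that $\hat{u}$ is independent of $t$, yields
\begin{equation*}
\|\hat{u}\|_{H_{s, \de}(\Om_{\theta/2})}\leq C\sqrt{\theta}\,\|u\|_{H_{s, \de+\frac{1}{2}}(\Si)}
\end{equation*}
for every $s\in\N$ and $\de\in\R$. Applied with $s=s_{1}+1$ and $\de=\de_{1}-1$ to $u\in H_{s_{1}+1, \de_{1}-\frac{1}{2}}(\Si)\cap\B_{\rho}$, this shows $\Phi_{u}-id\in H_{s_{1}+1, \de_{1}-1}(\Om_{\theta/2})$ with norm uniformly controlled by $\rho$. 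Since $s_{1}\geq 3>\frac{n+1}{2}$ and $\de_{1}>-2=-\frac{n+1}{2}$, the hypotheses of Lemma \ref{composition2} are satisfied, and for every fixed $u\in\B_{\rho}$ the composition $f\mapsto f\circ\Phi_{u}$ is a continuous isomorphism $H_{s^{\pr}, \de^{\pr}}(\Phi_{u}(\Om_{\theta/2}))\to H_{s^{\pr}, \de^{\pr}}(\Om_{\theta/2})$ for every $s^{\pr}\leq s_{1}+1$ and $\de^{\pr}\in\R$, with operator norm bounded uniformly in $u\in\B_{\rho}$.

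Joint continuity in the pair $(f, u)$ is the main technical point. Given sequences $(f_{n}, u_{n})\to (f, u)$ I would split
\begin{equation*}
\ti{f}_{n}-\ti{f}=(f_{n}-f)\circ\Phi_{u_{n}}+\big(f\circ\Phi_{u_{n}}-f\circ\Phi_{u}\big).
\end{equation*}
The first piece is controlled by the uniform composition estimate above. For the second, since smooth compactly supported functions are dense in $H_{s^{\pr}, \de^{\pr}}(\Om_{\theta})$, I would approximate $f$ by a smooth $f_{\ep}$, use the chain rule plus the pointwise bound $|\Phi_{u_{n}}-\Phi_{u}|\leq |u_{n}-u|$ and the Sobolev embedding of $\B_{\rho}$ into $C^{1}$ to obtain smallness for $f_{\ep}\circ\Phi_{u_{n}}-f_{\ep}\circ\Phi_{u}$, then use the uniform composition bound once more for the tail $(f-f_{\ep})$. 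The obstacle here is precisely the nonlinear dependence of $\Phi_{u}$ on $u$; the density-plus-uniform-bound scheme is the standard way around it.

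Finally, for the second assertion I would apply the restriction Lemma \ref{restriction} to $\ti{f}\in H_{s^{\pr}, \de^{\pr}}(\Om_{\theta/2})$ along the slice $\Si_{\tau}|_{\tau=0}=\Si$, obtaining $\ti{f}|_{t=0}\in H_{s^{\pr}-1, \de^{\pr}+\frac{1}{2}}(\Si, \Om_{\theta/2})$ with continuous inclusion. Since $\ti{f}(x, 0)=f(x, u(x))$ by definition, combining the restriction bound with the joint continuity of $(f, u)\mapsto \ti{f}$ just established gives the required continuity of $(f, u)\mapsto f(x, u(x))$ into $H_{s^{\pr}-1, \de^{\pr}+\frac{1}{2}}(\Si, \Om_{\theta/2})$.
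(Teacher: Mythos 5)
Your proposal follows essentially the same route as the paper: realize the map as composition with the diffeomorphism $(x,t)\mapsto(x,u(x)+t)$, verify it satisfies the hypotheses of Lemma \ref{composition2}, handle joint continuity via a density/approximation argument, and finish the second assertion with Lemma \ref{restriction}. You have tidied up two small points where the paper is loose — you orient the diffeomorphism as $\Phi_u:\Om_{\theta/2}\to\Om_\theta$ rather than $\Om_\theta\to\Om_{3\theta/2}$ and then shrinking the domain, and you correctly compute that the trivial extension $\hat u(x,t)=u(x)$ lies in $H_{s_1+1,\de_1-1}(\Om_{\theta/2})$ (the paper writes $\de_1$) — but both variants satisfy the hypotheses of Lemma \ref{composition2}, so the substance is the same.
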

\begin{proof}
Condition(A) (\ref{condition (A)}) implies that $|u(x)|\leq(\theta_{0}/2)\si(x)^{-\ka}$ for some $-1<\ka<\de_{1}+1$, so we can consider a well-defined map $F:\Om_{\theta}\rightarrow\Om_{\frac{3}{2}\theta}$, where $F: (x, t)\rightarrow (x, u(x)+t)$. Then $|DF|=1$, so $F$ is a diffeomorphism $\Om_{\theta}\rightarrow F(\Om_{\theta})$. Furthermore, $(F-id)(x, t)=(0, u(x))\in H_{s_{1}+1, \de_{1}}(\Om_{\theta})$. Now we can apply lemma \ref{composition2} to the mapping $F$, so $f\rightarrow\ti{f}=f\circ F$ is an isomorphism $H_{s^{\pr}, \de^{\pr}}(\Om_{\theta})\rightarrow H_{s^{\pr}, \de^{\pr}}(F(\Om_{\theta}))$. In fact, by the bound of $u$, we know that $F(\Om_{\theta})$ contains $\Om_{\theta/2}$, so clearly $\ti{f}$ lies in $H_{s^{\pr}, \de^{\pr}}(\Om_{\theta/2})$, and we have the continuity for the first factor $f$. For the second factor $u$, we only need to show that $u\rightarrow f(x, u(x)+t)$ is continuous $H_{s_{1}+1, \de_{1}-\frac{1}{2}}(\Si)\rightarrow L^{2}_{\de^{\pr}}(\Om_{\theta/2})$ for fixed $f\in L^{2}_{\de^{\pr}}(\Om_{\theta})$. Using multiplication lemma \ref{properties of weighted sobolev case3} recursively to higher derivatives as in the proof of Theorem 2.3 in \cite{ChOM} gives the continuity in $H_{s^{\pr}, \de^{\pr}}$. Suppose $u_{n}\rightarrow u$ in $H_{s_{1}+1, \de_{1}-\frac{1}{2}}$, hence $u_{n}\rightarrow u$ in $C^{0}_{\ka}$ for some $-1<\ka<\de_{1}+1$. To show the $L^{2}_{\de^{\pr}}$ continuity, we can approximate $f$ by compactly supported smooth function $g$ in $L^{2}_{\de^{\pr}}$, then $|f(x, u_{n}(x)+t)-f(x, u(x)+t)|\leq|f(x, u_{n}(x)+t)-g(x, u_{n}(x)+t)|+|g(x, u(x)+t)-f(x, u(x)+t)|+|g(x, u_{n}(x)+t)-g(x, u(x)+t)|$. The first and second terms can be chosen very small in $L^{2}_{\de^{\pr}}$, and the third one converge to $0$ in $L^{2}_{\de^{\pr}}$. So we get the continuity. For the restriction, we can directly apply the restriction lemma \ref{restriction} to $\ti{f}$.
\end{proof}

Moreover, we also have the differentiability w.r.t. $u$.
\begin{lemma}\label{differentiable}
Given $s_{1}\geq 3$, $\de_{1}>-2$, $\theta\in(0, 1)$, $\de^{\pr}\in\R$ and $f\in H_{s_{1}+1, \de^{\pr}}(\Om_{\theta})$. Consider $\B_{\rho}\subset H_{s_{1}+1, \de_{1}-\frac{1}{2}}(\Si)$ with $\rho$ chosen to satisfy Condition(A) in (\ref{condition (A)}) for the $\theta$. Then
\begin{equation}
\F: u\rightarrow f(x, u(x)),
\end{equation}
is continuous Fr\'echet differentiable as a map $\B_{\rho}\rightarrow H_{s_{1}-1, \de^{\pr}+\frac{1}{2}}(\Si)$. Furthermore, the Fr\'echet derivative is given by formal derivatives,
\begin{equation}
D_{u}\F(v)=\partial_{t}f(x, u(x))\cdot v,
\end{equation}
where $v\in H_{s_{1}+1, \de_{1}-\frac{1}{2}}(\Si)$.
\end{lemma}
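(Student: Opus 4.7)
The plan is to use the fundamental theorem of calculus combined with the composition continuity from Lemma \ref{composition at x u} and the multiplication property of weighted Sobolev spaces, both to identify the derivative and to control the remainder. For $u\in\B_{\rho}$ and $v\in H_{s_{1}+1,\de_{1}-\frac{1}{2}}(\Si)$ of sufficiently small norm that $u+sv\in\B_{\rho}$ for every $s\in[0,1]$, I would write
\begin{equation*}
\F(u+v)-\F(u)=\int_{0}^{1}\partial_{t}f\bigl(x,u(x)+sv(x)\bigr)\,v(x)\,ds,
\end{equation*}
so that the candidate derivative $L_{u}v=\partial_{t}f(x,u(x))\cdot v$ and the remainder
\begin{equation*}
R(u,v)=\int_{0}^{1}\bigl[\partial_{t}f(x,u(x)+sv(x))-\partial_{t}f(x,u(x))\bigr]v(x)\,ds
\end{equation*}
appear naturally.

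First I would verify that $L_{u}$ is a bounded linear operator into $H_{s_{1}-1,\de^{\pr}+\frac{1}{2}}(\Si)$. Since $f\in H_{s_{1}+1,\de^{\pr}}(\Om_{\theta})$ gives $\partial_{t}f\in H_{s_{1},\de^{\pr}+1}(\Om_{\theta})$, Lemma \ref{composition at x u} (applied with $s^{\pr}=s_{1}$ and weight $\de^{\pr}+1$) places $\partial_{t}f(x,u(x))\in H_{s_{1}-1,\de^{\pr}+\frac{3}{2}}(\Si)$, with continuous dependence on $u$. The multiplication Lemma \ref{properties of weighted sobolev case2} with $n=3$ then yields
\begin{equation*}
\|\partial_{t}f(x,u(x))\,v\|_{H_{s_{1}-1,\de^{\pr}+\frac{1}{2}}(\Si)}\leq C\|\partial_{t}f(x,u(x))\|_{H_{s_{1}-1,\de^{\pr}+\frac{3}{2}}(\Si)}\|v\|_{H_{s_{1}+1,\de_{1}-\frac{1}{2}}(\Si)},
\end{equation*}
the weight condition reducing exactly to $\de_{1}>-2$, which is precisely the hypothesis of the lemma. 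Next, the same bound applied pointwise in $s$ together with Minkowski's integral inequality gives
\begin{equation*}
\|R(u,v)\|_{H_{s_{1}-1,\de^{\pr}+\frac{1}{2}}}\leq C\|v\|_{H_{s_{1}+1,\de_{1}-\frac{1}{2}}}\sup_{s\in[0,1]}\|\partial_{t}f(x,u(x)+sv(x))-\partial_{t}f(x,u(x))\|_{H_{s_{1}-1,\de^{\pr}+\frac{3}{2}}}.
\end{equation*}
Because $u+sv\to u$ in $H_{s_{1}+1,\de_{1}-\frac{1}{2}}(\Si)$ uniformly in $s\in[0,1]$ as $\|v\|\to 0$, the continuity statement in Lemma \ref{composition at x u} (applied to $\partial_{t}f$) forces the supremum to vanish. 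Hence $\|R(u,v)\|=o(\|v\|)$, proving Fr\'echet differentiability at $u$ with derivative $L_{u}$.

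Finally, continuity of the map $u\mapsto L_{u}$ into the space of bounded linear operators from $H_{s_{1}+1,\de_{1}-\frac{1}{2}}(\Si)$ to $H_{s_{1}-1,\de^{\pr}+\frac{1}{2}}(\Si)$ will follow from the same multiplication estimate, since the operator norm of $L_{u_{1}}-L_{u_{2}}$ is controlled by $\|\partial_{t}f(x,u_{1}(x))-\partial_{t}f(x,u_{2}(x))\|_{H_{s_{1}-1,\de^{\pr}+\frac{3}{2}}(\Si)}$, which tends to zero as $u_{1}\to u_{2}$ in $H_{s_{1}+1,\de_{1}-\frac{1}{2}}$ by Lemma \ref{composition at x u}. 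The main delicacy is the bookkeeping of Sobolev indices and weights needed to close the multiplication estimate at the target space $H_{s_{1}-1,\de^{\pr}+\frac{1}{2}}$; once this is arranged, the rest is the standard IFT-style differentiability argument.
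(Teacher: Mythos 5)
Your argument is correct and rests on the same three ingredients as the paper's proof: the Newton--Leibniz formula, the composition continuity from Lemma \ref{composition at x u}, and the multiplication Lemma \ref{properties of weighted sobolev case2}, with the weight bookkeeping closing at exactly $\de_{1}>-2$ just as you compute. The one structural difference is that the paper first shows Gateaux differentiability along a fixed direction $v$ (taking $\tau\to 0$), and then upgrades to Fr\'echet differentiability by verifying continuity of $u\mapsto L_{u}$ and invoking the Gateaux-plus-continuity criterion from \cite{Cha}; you instead bound the full Fr\'echet remainder $R(u,v)$ directly and show it is $o(\|v\|)$ uniformly over directions, which avoids the external citation at the cost of needing the uniform-in-$s$ continuity observation you supply. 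Both routes are valid and amount to the same estimate; yours is marginally more self-contained.
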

\begin{proof}
Using lemma \ref{composition at x u}, we know that $f(x, u(x)+t)$ lies in $H_{s_{1}+1, \de^{\pr}}(\Om_{\theta/2})$, and $f(x, u(x))\in H_{s_{1}, \de^{\pr}+\frac{1}{2}}(\Si, \Om_{\theta/2})$. Hence $\partial_{t}f(x, t)\in H_{s_{1}, \de^{\pr}+1}(\Om_{\theta/2})$ and $\partial_{t}f(x, u(x))\in H_{s_{1}-1, \de^{\pr}+\frac{3}{2}}(\Si)$. To show that $\F$ is Fr\'echet differentiable(See Definition 1.1.1 in \cite{Cha}), we can first show Gateaux differentiable(See Definition 1.1.2 in \cite{Cha}), i.e.
\begin{equation}
\lim_{\tau\rightarrow 0}\frac{\|f(x, u(x)+\tau v(x))-f(x, u(x))-\partial_{t}f(x, u(x))(\tau v(x))\|_{H_{s_{1}-1, \de^{\pr}+\frac{1}{2}}(\Si)}}{\tau\|v(x)\|_{H_{s_{1}+1, \de_{1}-\frac{1}{2}}(\Si)}}=0,
\end{equation}
for any $v\in H_{s_{1}+1, \de_{1}-\frac{1}{2}}(\Si)$. Using Newton-Leibniz formula,
\begin{equation}
f(x, u(x)+\tau v(x))-f(x, u(x))=(\int_{s=0}^{1}\partial_{t}f(x, u(x)+s\tau v(x))ds)(\tau v(x)),
\end{equation}
Using the multiplication lemma (\ref{multiplication inequality}) in the case $H_{s_{1}-1, \de^{\pr}+\frac{3}{2}}(\Si)\times H_{s_{1}+1, \de_{1}-\frac{1}{2}}(\Si)\rightarrow H_{s_{1}-1, \de^{\pr}+\frac{1}{2}}(\Si)$, we only need to show,
$$\lim_{\tau\rightarrow 0}\|\partial_{t}f(x, u(x)+\tau v(x))-\partial_{t}f(x, u(x))\|_{H_{s_{1}-1, \de^{\pr}+\frac{3}{2}}(\Si)}=0.$$
This convergence follows from the continuity of $(\partial_{t}f, u)\rightarrow \partial_{t}f(x, u(x))$ as a map $H_{s_{1}, \de^{\pr}+1}(\Om_{\theta})\times H_{s_{1}+1, \de_{1}-\frac{1}{2}}(\Si)\rightarrow H_{s_{1}-1, \de^{\pr}+\frac{3}{2}}(\Si)$ in lemma \ref{composition at x u}. Now the multiplication operator $L_{u}: v\rightarrow \partial_{t}f(x, u(x))\cdot v$ is a bounded linear operator $L(H_{s_{1}+1, \de_{1}-\frac{1}{2}}(\Si), H_{s_{1}-1, \de^{\pr}+\frac{1}{2}}(\Si))$ with
$$\|L_{u}\|_{L(H_{s_{1}+1, \de_{1}-\frac{1}{2}}(\Si), H_{s_{1}-1, \de^{\pr}+\frac{1}{2}}(\Si))}\leq C\|\partial_{t}f(x, u(x))\|_{H_{s_{1}-1, \de^{\pr}+\frac{3}{2}}(\Si)}$$
by inequality (\ref{multiplication inequality}). The operator $L_{u}$ is also continuous w.r.t $u$ by lemma \ref{composition at x u}, so we know that $\F$ is Fr\'echet differentiable by Theorem 1.1.3 in \cite{Cha}, and $D_{u}\F(v)=\partial_{t}f(x, u(x))\cdot v$.
\end{proof}

Now we can prove the differentiability of $H_{u}$ w.r.t. $u$.
\begin{proposition}\label{differentiability w.r.t u}
For $s\geq 4$, $\de>-2$. Given a vacuum data $(g, k)\in\VC_{s+1, \de+\frac{1}{2}}(\Si)$ and $\theta$ the boost ratio as in the beginning of this section. If $\B_{\rho}\subset H_{s, \de-\frac{1}{2}}(\Si)$ with $\rho$ satisfying Conditions (A)(B) as in (\ref{condition (A)})(\ref{condition (B)}) for the $\theta$, then the mean curvature operator (\ref{mean curvature operator}) $\mH: \B_{\rho}\rightarrow H_{s-2, \de+\frac{3}{2}}(\Si)$ is continuous differentiable w.r.t. $u$, i.e. $(D_{u}\mH)\in C\big(\B_{\rho}, L(H_{s, \de-\frac{1}{2}}(\Si), H_{s-2, \de+\frac{3}{2}}(\Si))\big)$. Furthermore, $D_{u}\mH$ is given by the formal variational formula.
\end{proposition}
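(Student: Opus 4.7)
The plan is to treat $H_u$ as a polynomial-rational combination of $u$, $Du$, $D^{2}u$, and metric quantities of $\ga$ evaluated along $Graph_{u}$, and then to apply Lemmas \ref{sobolev bound for metric coefficient}, \ref{composition at x u}, \ref{differentiable} and the multiplication/restriction theory of Section \ref{weighted sobolev space} term by term.

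First I would unpack (\ref{Hu1}) using (\ref{inverse graph metric}), (\ref{mean curvature for level surface}), (\ref{U}), and (\ref{local frame}), expanding the covariant derivative $\nabla_{\al_{i}}N$ via the Christoffel symbols of $\ga$. Each Christoffel symbol is a rational function of $(\al, \be, g, g^{-1})$ times a linear combination of first derivatives $\partial(\al, \be, g)$; the factor $\nu=(1-|U|_{g}^{2})^{-1/2}$ is a smooth function of $Du$ and the metric whose denominator is uniformly bounded below on $\B_{\rho}$ by Condition (B). After cancellations, $H_{u}$ is a finite sum of terms of the two schematic types
\begin{equation*}
P(\al, \be, g, g^{-1}, Du)\bigl|_{(x, u(x))}\cdot\bigl(\partial\al, \partial\be, \partial g\bigr)\bigl|_{(x, u(x))}\cdot Q(Du), \qquad \widetilde{P}(\al, \be, g, g^{-1}, Du)\bigl|_{(x, u(x))}\cdot D^{2}u,
\end{equation*}
with $P, \widetilde{P}, Q$ smooth rational expressions with bounded denominators.

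Second I would establish the target space. Lemma \ref{sobolev bound for metric coefficient} places the metric components $(\al-1, \be, g-e, g^{-1}-e^{-1})$ in $H_{s+1, \de}(\Om_{\theta})$, hence their first derivatives in $H_{s, \de+1}(\Om_{\theta})$. Applying Lemma \ref{composition at x u} with $s_{1}=s-1\geq 3$ and $\de_{1}=\de$, the restrictions along $Graph_{u}$ lie in $H_{s-1, \de+\frac{1}{2}}(\Si)$ for the metric components and in $H_{s-1, \de+\frac{3}{2}}(\Si)$ for their first derivatives. Combining these with $Du\in H_{s-1, \de+\frac{1}{2}}(\Si)$ and $D^{2}u\in H_{s-2, \de+\frac{3}{2}}(\Si)$ through the multiplication Lemma \ref{properties of weighted sobolev case2}, each term of the expansion lies in $H_{s-2, \de+\frac{3}{2}}(\Si)$; the algebra inequalities $s-2>3/2$ and $\de+\frac{3}{2}<2(\de+\frac{3}{2})+\frac{3}{2}$ are ensured by $s\geq 4$ and $\de>-2$. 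Hence $\mH:\B_{\rho}\rightarrow H_{s-2, \de+\frac{3}{2}}(\Si)$ is continuous.

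Third, for differentiability, the dependence of $H_{u}$ on $u$ splits into: (i) appearances of $u$, $Du$, $D^{2}u$ as algebraic factors, which are bounded linear maps from $H_{s, \de-\frac{1}{2}}(\Si)$ into $H_{s, \de-\frac{1}{2}}$, $H_{s-1, \de+\frac{1}{2}}$ and $H_{s-2, \de+\frac{3}{2}}$ respectively, so trivially Fr\'echet differentiable; and (ii) compositions $f(x, u(x))$ where $f$ is either a metric component or a first derivative thereof, to which Lemma \ref{differentiable} applies with $s_{1}=s-1$ and the appropriate $\de^{\pr}$, giving continuous Fr\'echet differentiability with derivative $v\mapsto\partial_{t}f(x, u(x))\cdot v$. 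Since $H_{s-2, \de+\frac{3}{2}}(\Si)$ is a Banach module over the relevant restricted metric spaces by the multiplication lemma, the product and chain rules assemble these pieces into the formal variational formula for $D_{u}\mH$, and the continuity of $u\mapsto D_{u}\mH$ in $L\bigl(H_{s, \de-\frac{1}{2}}(\Si), H_{s-2, \de+\frac{3}{2}}(\Si)\bigr)$ follows from the continuity clauses of Lemmas \ref{composition at x u} and \ref{differentiable}. The main difficulty is purely bookkeeping: after the one-derivative loss from composing with $u(x)$ (via the restriction Lemma \ref{restriction}) and the further derivative lost when differentiating in $u$ as in Lemma \ref{differentiable}, one must verify that every factor of $H_{u}$ and of its $u$-derivative still lies in the Banach algebra range of the multiplication lemma; the threshold $s\geq 4$, $\de>-2$ is precisely what guarantees this.
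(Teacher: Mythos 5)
Your proposal is correct and follows essentially the same route as the paper: expand $H_u$ in local coordinates via the explicit formulas, identify the two schematic term types (compositions of $(\ga-\ti{\eta})$ and $\partial\ga$ with $(x,u(x))$, and algebraic factors of $Du, D^{2}u$ with the uniformly controlled denominator from Condition (B)), and then assemble continuity and Fr\'echet differentiability from Lemmas \ref{sobolev bound for metric coefficient}, \ref{composition at x u}, \ref{differentiable} and the weighted multiplication lemma. The only slight difference is that the paper addresses $\nu$ explicitly by the $\nu^{2}-1=\frac{|U|^{2}}{1-|U|^{2}}$ trick while you absorb it into the rational-denominator bookkeeping; the substance is identical.
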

\begin{proof}
By the choice of $\rho$, $\mH$ is well-defined. Write out the expression for $H_{u}$ in (\ref{Hu1}) in local coordinates $\{(t, x^{i}):\ i=1, 2, 3\}$ of $\Om_{\theta}$ as follows:
\begin{equation}\label{Hu1 in local coordinates}
\begin{split}
H_{u} &=(g_{M})^{ij}\lan\nabla_{\al_{i}}N, \al_{j}\ran_{\ga}=\nu\cdot(g_{M})^{ij}\lan\nabla_{\partial_{i}+u_{i}\partial_{t}}(U+T), \partial_{j}+u_{j}\partial_{t}\ran_{\ga}\\
      &=\nu\cdot(g_{M})^{ij}\big\{(\partial_{i}+u_{i}\partial_{t})(U+T)^{\mu}\lan\partial_{\mu}, \partial_{j}+u_{j}\partial_{t}\ran_{\ga}+(U+T)^{\mu}\lan\nabla_{\partial_{i}+u_{i}\partial_{t}}\partial_{\mu}, \partial_{j}+u_{j}\partial_{t}\ran_{\ga}\big\}\\
      &=\nu\cdot(\ga^{ij}+\frac{\nu^{2}}{\al^{2}}(\be^{i}-\al U^{i})(\be^{j}-\al U^{j}))\big\{(\partial_{i}+u_{i}\partial_{t})(U^{\mu}+T^{\mu})\cdot(\ga_{\mu j}+u_{j}\ga_{\mu t})\\
      &+(U^{\mu}+T^{\mu})(\Ga_{i\mu, j}+u_{i}\Ga_{t\mu, j}+u_{j}\Ga_{i\mu, t}+u_{i}u_{j}\Ga_{t\mu, t})\big\},
\end{split}
\end{equation}
where $\Ga_{\mu\nu, \si}$ is the Christoffel symbol for $\ga$, and all coefficients of $\ga$ are evaluated at $(x, u(x))$. Except for the term $\nu$, $H_{u}$ is an algebraic expression containing two type of terms in ($\ref{Hu1 in local coordinates}$). One type of terms are the composition of the coefficients of $(\ga-\ti{\eta})$ and $\partial\ga$ with $(x, u(x))$, and the other terms contains $\partial u$ and $\partial^{2}u$. The only term appears in the denominator is $1+\lan\be, Du\ran_{g}$, and $|\lan\be, Du\ran_{g}|\leq\frac{1}{2}$ by the choice of $\rho$ as in Condition(B).

Since $(\ga-\ti{\eta})\in H_{s+1, \de}(\Om_{\theta})$, the composition of the metric coefficients of $(\ga-\ti{\eta})$ with $(x, u(x))$, i.e. $\{(\ga^{\mu\nu}-\ti{\eta}^{\mu\nu}),\ (\ga_{\mu\nu}-\ti{\eta}_{\mu\nu}),\ (\al-1), \be^{i}, \be_{i}, (g^{ij}-e^{ij}), (g_{ij}-e_{ij})\}(x, u(x))$ are continuous differentiable w.r.t. $u$ as maps $H_{s, \de-\frac{1}{2}}(\Si)\rightarrow H_{s-2, \de+\frac{1}{2}}(\Si)$ by lemma \ref{differentiable}. Similarly the composition of the coefficients of $\partial\ga$ with $(x, u(x))$, i.e. $(\partial\ga)(x, u(x))$ are also continuous differentiable w.r.t. $u$ as maps $H_{s, \de-\frac{1}{2}}(\Si)\rightarrow H_{s-2, \de+\frac{3}{2}}(\Si)$. The terms $\partial u$ and $\partial^{2}u$ are trivially continuous differentiable w.r.t. $u$ as maps $H_{s, \de-\frac{1}{2}}(\Si)\rightarrow H_{s-1, \de+\frac{1}{2}}(\Si)$ and $H_{s, \de-\frac{1}{2}}(\Si)\rightarrow H_{s-2, \de+\frac{3}{2}}(\Si)$ respectively. So $U=\frac{\al Du}{1+\lan\be, Du\ran}\in H_{s-1, \de+\frac{1}{2}}(\Si)$ and is continuous differentiable w.r.t. $u$, hence is $\nu^{2}-1=\frac{|U|^{2}}{1-|U|^{2}}\in H_{s-1, \de+\frac{3}{2}}(\Si)$, since $|U|\leq\frac{1}{2}$. So by similar argument as that for $\al$ in lemma \ref{sobolev bound for metric coefficient}, $(\nu-1)$ is also continuous differentiable w.r.t. $u$ as $H_{s, \de-\frac{1}{2}}(\Si)\rightarrow H_{s-2, \de+\frac{3}{2}}(\Si)$. Combing all them together, $H_{u}$ is continuously differentiable w.r.t. $u$ by the multiplication lemma \ref{properties of weighted sobolev case3}.
\end{proof}


\subsection{Linear theory}

Given a 3-dimensional manifold $(\Si, e)$ which is Euclidean at infinity. Let us give some results about linear elliptic operators which are asymptotic to the Laplacian $\lap_{e}$ on $(\Si, e)$. Such type of elliptic operators have been widely studied in \cite{Ba2}\cite{ChoCH}\cite{ChOM}\cite{L}.

Let $L$ be an operator on $(\Si, e)$ of form:
$$Lu=\Si_{k=0}^{2}a_{k}\partial^{k}u,$$
with $u$ and $Lu$ functions on $\Si$, satisfying:
\begin{equation}\label{hypothesis for L}
\begin{split}
& \la e\leq a_{2}\leq \la^{-1}e\ \textrm{as metrics, with $\la$ the \emph{elliptic coefficient};}\\
& (a_{2}-e)\in H_{s_{0}+1, \de_{0}}(\Si),\ a_{1}\in H_{s_{0}, \de_{0}+1}(\Si),\ a_{0}\in H_{s_{0}-1, \de_{0}+2}(\Si),
\end{split}
\end{equation}
where $s_{0}\geq 4$, $\de_{0}>-\frac{3}{2}$. We will show that in certain weighted spaces, such $L$ have uniformly bounded inverse on the orthogonal compliment of $ker(L)$ depending only on the norms of the coefficients. First we have,
\begin{lemma}\label{L2 estimate for L}
Given $s\leq s_{0}$, $-\frac{3}{2}<\de<-\frac{1}{2}$. There exists a constant $C$ and a large $r>R$, depending only on $s_{0}$, $\de_{0}$, the elliptic coefficient $\la$ and the norms $\|a_{2}-e\|_{H_{s_{0}, \de_{0}}(\Si)}$, $\|a_{1}\|_{H_{s_{0}-1, \de_{0}+1}(\Si)}$ and $\|a_{0}\|_{H_{s_{0}-2, \de_{0}+2}(\Si)}$, such that for any $u\in H_{s, \de-1}(\Si)$,
\begin{equation}\label{estimates for L}
\|u\|_{H_{s, \de-1}(\Si)}\leq C(\|Lu\|_{H_{s-2, \de+1}(\Si)}+\|u\|_{H_{s-2}(\Si_{int, 2r})}),
\end{equation}
where $\Si_{int, 2r}$ is the union of $\Si_{int}$ with all the annuli $B_{2r}\setminus B_{R}$ inside each end $\Si_{i}$, and $H_{s-2}$ is the standard $L^{2}$ Sobolev space on $\Si_{int, 2r}$.
\end{lemma}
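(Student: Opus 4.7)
The proof will rest on the weighted $L^{2}$ elliptic theory for operators asymptotic to the Laplacian on an asymptotically Euclidean $3$-manifold, combined with interior elliptic regularity. The key observation is that the hypothesis $-\tfrac{3}{2}<\de<-\tfrac{1}{2}$ places the weight $\de-1$ in the open interval $(-\tfrac{5}{2},-\tfrac{3}{2})$, which sits strictly between two successive exceptional weights of $\lap_{e}$ on $\R^{3}$: the exceptional set is $\{-\tfrac{3}{2}+j:j\in\mZ\}$, corresponding to the growth rates $r^{-(\de-1+3/2)}=r^{-\de-1/2}$ of harmonic spherical harmonics.

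The plan is, first, to invoke Bartnik's weighted estimate (see \cite{Ba2},\cite{ChoCH}) for the reference Laplacian $\lap_{e}$ on $(\Si,e)$: for $\de-1$ in the non-exceptional interval above, there is a constant $C_{0}=C_{0}(\Si,e)$ such that
$$\|u\|_{H_{s,\de-1}(\Si)}\leq C_{0}\bigl(\|\lap_{e}u\|_{H_{s-2,\de+1}(\Si)}+\|u\|_{L^{2}(\Si_{int})}\bigr).$$
This is obtained by localizing to each end with a partition of unity, applying the $\R^{3}$ estimate of McOwen/Bartnik on the exterior Euclidean pieces, and patching with interior elliptic regularity on $\Si_{int}$.

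Next, I would decompose $L=\lap_{e}+T$, where $Tu=((a_{2})^{ij}-e^{ij})\partial^{2}_{ij}u+\ti{a}_{1}^{i}\partial_{i}u+a_{0}u$ and $\ti{a}_{1}$ absorbs the Christoffel-type difference between $div_{a_{2}}grad_{a_{2}}$ and $\lap_{e}$. By the multiplication Lemma \ref{properties of weighted sobolev case2} applied with the coefficient weights $\de_{0},\de_{0}+1,\de_{0}+2$ from (\ref{hypothesis for L}) against the natural weights $\de+1,\de,\de-1$ of $\partial^{2}u,\partial u,u$, the fact that $\de_{0}>-\tfrac{3}{2}=-\tfrac{n}{2}$ gives strict positivity $\de_{0}+\tfrac{3}{2}>0$ needed for the product to land in $H_{s-2,\de+1}$, so $T:H_{s,\de-1}(\Si)\to H_{s-2,\de+1}(\Si)$ is bounded with norm controlled by the data in the statement. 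Crucially, the coefficient norms are integrable tails, so $\|a_{2}-e\|_{H_{s_{0}+1,\de_{0}}(\{|x|>r\})}\to 0$ as $r\to\infty$ on each end, and similarly for $a_{1},a_{0}$; thus for $r$ sufficiently large (depending only on the quantities in the statement) the operator norm of $T$ restricted to $H_{s,\de-1}(\{|x|>2r\})\to H_{s-2,\de+1}(\{|x|>2r\})$ can be made smaller than $1/(2C_{0})$.

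To finish, apply the Laplacian estimate to $u$, bound $\|\lap_{e}u\|\leq\|Lu\|+\|Tu\|$, and split $\|Tu\|_{H_{s-2,\de+1}(\Si)}$ into an exterior piece on $\{|x|>2r\}$ which is absorbed into the left-hand side via the smallness above, and an interior piece on $\Si_{int,2r}$ which is controlled by $\|u\|_{H^{s}(\Si_{int,2r})}$. Standard interior elliptic regularity applied to the uniformly elliptic operator $L$ yields $\|u\|_{H^{s}(\Si_{int,2r})}\leq C(\|Lu\|_{H^{s-2}(\Si_{int,3r})}+\|u\|_{H^{s-2}(\Si_{int,3r})})$; the first term is dominated by $\|Lu\|_{H_{s-2,\de+1}(\Si)}$, and the second gives exactly the compact correction $\|u\|_{H_{s-2}(\Si_{int,2r})}$ on the right-hand side of (\ref{estimates for L}) after slightly enlarging $r$. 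The main obstacle is the first step: one must verify that the non-exceptional character of the weight $\de-1$ allows a globally valid Laplacian estimate on the multi-ended manifold $\Si$, which amounts to a careful partition-of-unity argument invoking Lockhart--McOwen/Bartnik theory on each Euclidean end. Once Step 1 is granted, the remaining perturbation-and-absorption argument is routine given the multiplication lemma and interior elliptic regularity.
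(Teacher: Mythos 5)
Your proposal is correct and follows essentially the same route as the paper: Bartnik's $\R^{3}$ estimate for the Laplacian in the non-exceptional weight range, a perturbation argument exploiting the decay of the coefficients (so that $L-\lap$ becomes a small perturbation past a large radius $r$), and a compact-core estimate to close the argument. The only organizational difference is that the paper applies the flat-Laplacian estimate to cut-off pieces $\chi_{i,r}u$ on each end and then handles $L$ directly (using an interpolation inequality on the annulus $A_{r}$ to drop to $H_{s-2}$), whereas you first assert the global $\lap_{e}$ estimate on $\Si$ and then perturb to $L$, invoking interior elliptic regularity on $\Si_{int,2r}$; these are equivalent in substance and both rely on the same ingredients.
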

\begin{proof}
Let $\Si=\Si_{int}\cup_{i=1}^{l}E_{i}$. Given a function $\chi\in C^{\infty}_{c}(\R^{3}\setminus B_{1})$, such that $0\leq\chi\leq 1$ and $\chi=1$ on $\R^{3}\setminus B_{2}$. We can find a partition of unity $\{\chi_{i, r}\}_{i=0}^{l}$ of $\Si$ for $r>R$, with $\chi_{i, r}(x)=\chi(|x|/r)$ for $x\in E_{i}\cong\R^{3}\setminus B_{R}$, and $\chi_{i, r}(x)=0$ for $x\in\Si\setminus E_{i}$, and $\chi_{0, r}(x)=1-\Si_{i=1}^{l}\chi_{i, r}(x)$. Then $u=\Si_{i=1}^{l}u_{i, r}$, with $u_{i, r}=\chi_{i, r}u$. Let us fix an end $E_{i}$ and $u_{i, r}$ and forget the sub-index $i$ now. Since $-\frac{3}{2}<\de<-\frac{1}{2}$ corresponds to non-exceptional value in \cite{Ba2}, we can apply Theorem 1.7 in \cite{Ba2} with $p=2$ here,
\begin{equation}\label{estimates for L step 1}
\begin{split}
\|u_{r}\|_{H_{s, \de-1}(\R^{3})} &\leq C_{1}\|\lap u_{r}\|_{H_{s-2, \de+1}(\R^{3})}\\
                                 &\leq C_{1}\big\{\|Lu_{r}\|_{H_{s-2, \de+1}(E)}+\|(L-\lap)u_{r}\|_{H_{s-2, \de+1}(E)}\big\},
\end{split}
\end{equation}
where $\lap$ is the laplacian operator w.r.t. $\de_{ij}$ and $C_{1}$ a uniform constant.
\begin{equation}
\begin{split}
\|Lu_{r}\|_{H_{s-2, \de+1}(E)} &\leq\|\chi_{r}Lu\|_{H_{s-2, \de+1}(E)} +\|2a_{2}^{ij}\partial_{i}u\partial_{j}\chi_{r}+(a_{2}^{ij}\partial^{2}\chi_{r}+a_{1}^{i}\partial\chi_{r})u\|_{H_{s-2, \de+1}(E)}\\
                               &\leq C_{2}(r)(\|Lu\|_{H_{s-2, \de+1}(E)}+\|u\|_{H_{s-1}(A_{r})}),
\end{split}
\end{equation}
with $A_{r}=B_{2r}\setminus B_{r}$, and $C_{2}(r)$ is a constant depending only on $r$ and $\|a_{2}-e\|_{H_{s_{0}, \de_{0}}(A_{r})}$, $\|a_{1}\|_{H_{s_{0}-1, \de_{0}+1}(A_{r})}$. Since $\de_{0}>-\frac{3}{2}$, we can assume $\de_{0}-\ep=\de_{1}>-\frac{3}{2}$ for some $\ep>0$. Using multiplication lemma \ref{properties of weighted sobolev case2},
\begin{equation}\label{estimates for L-lap on end}
\begin{split}
\|(L&-\lap)u_{r}\|_{H_{s-2, \de+1}(E_{r})}=\|(a_{2}^{ij}-\de^{ij})\partial^{2}u_{r}+a_{1}^{i}\partial u_{r}+a_{0}u_{r}\|_{H_{s-2, \de+1}(E_{r})}\\
    &\leq C_{3}\big(\|a_{2}-e\|_{H_{s, \de_{1}}(E_{r})}+\|a_{1}\|_{H_{s-1, \de_{1}+1}(E_{r})}+\|a_{0}\|_{H_{s-2, \de_{1}+2}(E_{r})}\big)\|u_{r}\|_{H_{s, \de-1}(E_{r})},
\end{split}
\end{equation}
where $E_{r}=\R^{3}\setminus B_{r}$ and $C_{3}$ a uniform constant. Now $\|a_{2}-e\|_{H_{s, \de_{1}}(E_{r})}+\|a_{1}\|_{H_{s-1, \de_{1}+1}(E_{r})}+\|a_{0}\|_{H_{s-2, \de_{1}+2}(E_{r})}\leq\big(\|a_{2}-e\|_{H_{s, \de_{0}}(E_{r})}+\|a_{1}\|_{H_{s-1, \de_{0}+1}(E_{r})}+\|a_{0}\|_{H_{s-2, \de_{0}+2}(E_{r})}\big)r^{-\ep}$ for $r$ large enough. So we can always choose a $r>R$, depending only on $\de_{0}$ and $\big(\|a_{2}-e\|_{H_{s, \de_{0}}(E_{r})}+\|a_{1}\|_{H_{s-1, \de_{0}+1}(E_{r})}+\|a_{0}\|_{H_{s-2, \de_{0}+2}(E_{r})}\big)$, such that $\|(L-\lap)u_{r}\|_{H_{s-2, \de+1}(E_{r})}\leq \frac{1}{2C_{1}}\|u_{r}\|_{H_{s, \de-1}(E_{r})}$. Putting them back to inequality (\ref{estimates for L step 1}),
\begin{equation}
\|u_{r}\|_{H_{s, \de-1}}\leq C_{4}\{\|Lu\|_{H_{s-2, \de+1}(E)}+\|u\|_{H_{s-1}(A_{r})}\},
\end{equation}
where $C_{4}$ depends only on $C_{2}(r)$. Using an interpolation inequality(see Lemma 2.2 in \cite{ChoCH}) to $\|u\|_{H_{s-1}(A_{r})}$, we can get the estimate of (\ref{estimates for L}) on each end. Applying the standard $L^{2}$ estimates to $u_{0, r}$ on $\Si_{int, 2r}$(See Corollary 2.2 on page 547 in \cite{Cho}),
\begin{equation}
\|u_{0, r}\|_{H_{s}(\Si_{int, 2r})}\leq C_{5}\{\|Lu_{0, r}\|_{H_{s-2}(\Si_{int, 2r})}+\|u_{0, r}\|_{H_{s-2}(\Si_{int, 2r})}\},
\end{equation}
where $C_{5}$ depends only on $s_{0}$, the elliptic coefficient $\la$ and the norms $\|a_{2}-e\|_{H_{s}(V_{int, 2r})}$, $\|a_{1}\|_{H_{s-1}(V_{int, 2r})}$ and $\|a_{0}\|_{H_{s-2}(V_{int, 2r})}$.
Combing results on all ends $E_{i, r}$ and $V_{int, 2r}$ together, we can get (\ref{estimates for L}) with $r$ and constant $C$ satisfying the requirement.
\end{proof}

Now we can prove a lemma similar to Theorem 1.10 in \cite{Ba2} and Theorem 5.6 in \cite{L}.
\begin{lemma}\label{Fredholmness of L}
Given $s\leq s_{0}$, $-\frac{3}{2}<\de<-\frac{1}{2}$, the operator $L$ is a Fredholm operator:
$$H_{s, \de-1}(\Si)\rightarrow H_{s-2, \de+1}(\Si),$$
i.e. $L$ has finite-dimensional kernel $ker(L, \de-1)=\{v\in H_{s, \de-1}(\Si):\ Lv=0\}$, and finite-dimensional co-kernel $coker(L, \de-1)$.
\end{lemma}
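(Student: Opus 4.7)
The plan is to deduce Fredholmness from the semi-Fredholm estimate in Lemma~\ref{L2 estimate for L} together with an analogous estimate for the formal adjoint, using the standard Peetre-type compactness trick twice. Throughout I will exploit that the a priori estimate (\ref{estimates for L}) controls the global weighted norm by the image norm plus an unweighted norm on the precompact piece $\Si_{int,2r}$, where classical Rellich is available.

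\emph{Finite kernel and closed range.} For $u \in \ker(L,\de-1)$, (\ref{estimates for L}) reduces to $\|u\|_{H_{s,\de-1}(\Si)} \leq C\|u\|_{H_{s-2}(\Si_{int,2r})}$; restriction to $\Si_{int,2r}$ followed by the classical Rellich--Kondrachov embedding gives a compact map $H_{s,\de-1}(\Si) \to H_{s-2}(\Si_{int,2r})$, so the unit ball of the kernel is relatively compact in $H_{s,\de-1}(\Si)$, forcing finite-dimensionality. To see the range is closed, fix a closed topological complement $X$ of $\ker(L,\de-1)$. If the sharper estimate $\|u\|_{H_{s,\de-1}} \leq C'\|Lu\|_{H_{s-2,\de+1}}$ on $X$ failed, I would take $u_n \in X$ with $\|u_n\|_{H_{s,\de-1}} = 1$ and $Lu_n \to 0$; compact extraction in $H_{s-2}(\Si_{int,2r})$ together with re-applying (\ref{estimates for L}) to differences would make $\{u_n\}$ Cauchy in $H_{s,\de-1}(\Si)$, and its limit would lie in $X \cap \ker(L,\de-1) = \{0\}$ yet have unit norm, a contradiction. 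This yields a bounded inverse on $X$ and hence closedness of the range of $L$.

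\emph{Finite cokernel.} I would then pass to the formal $L^2(\Si,e)$-adjoint $L^*$. Integration by parts shows $L^*$ has the same principal symbol $a_2$ (so the same ellipticity constant $\la$) and lower-order coefficients built algebraically from $a_0, a_1, a_2$ and their first and second derivatives; these still satisfy (\ref{hypothesis for L}) with $s_0$ replaced by $s_0 - 1$, which is admissible since $s_0 \geq 4$. The weight interval $-\frac{3}{2} < \de < -\frac{1}{2}$ is the symmetric non-exceptional interval about the critical weight $-1$ for $\lap_e$ on an end modelled on $\R^3$, so by the weighted $L^2$ duality of \cite{Ba2} the weight dual to $\de - 1$ again lies in this range. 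Consequently Lemma~\ref{L2 estimate for L} applies to $L^*$ on the appropriate dual weighted space, the argument above produces a finite-dimensional kernel of $L^*$ there, and distributional regularity bootstrapping (as in \cite{Ba2}\cite{L}) identifies that kernel with $\mathrm{coker}(L,\de-1)$.

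The main difficulty will be making the duality identification precise: one must verify that the formal adjoint $L^*$, defined a priori only distributionally on the dual weighted space, actually falls under hypothesis (\ref{hypothesis for L}) in a form allowing a uniform version of Lemma~\ref{L2 estimate for L}, and that elements of $\mathrm{coker}(L,\de-1)$ are regular enough to be recognised as classical kernel elements of $L^*$. Both points are standard consequences of the indicial-root analysis for asymptotically Euclidean Laplacians in \cite{Ba2}\cite{L}, whose framework I would adapt; once they are in place, the remainder of the proof is a direct transcription of the Peetre--Rellich argument above.
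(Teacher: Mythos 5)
Your treatment of the semi-Fredholm half matches the paper: finite-dimensional kernel and closed range both follow from estimate (\ref{estimates for L}) combined with Rellich compactness on $\Si_{int,2r}$, exactly as the paper cites from Theorem 1.10 of \cite{Ba2}. For the finite cokernel, however, you take a genuinely different route. The paper constructs a right parametrix: since $\|(L-\lap)\|_{H_{s,\de-1}(E_r)\to H_{s-2,\de+1}(E_r)}\to 0$ as $r\to\infty$, the operators $L_i=\lap+\chi_{i,r}(L-\lap)$ are Fredholm on $\R^3$ for $r$ large, and gluing their Fredholm inverses with a Fredholm inverse of $L$ on the compact piece $\Si_{int,8r}$ via cutoffs yields $S$ with $LS=\mathrm{id}+K$, $K$ compact, following Theorem 5.6 of \cite{L}. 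You instead pass to the formal $L^2(dx)$-adjoint $L^*$ and argue that $\ker(L^*)$ is finite-dimensional on a dual weighted space.

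The key gap is the self-duality claim. You assert that $-\frac{3}{2}<\de<-\frac{1}{2}$ is symmetric about the critical weight for $\lap_e$ and that the weight dual to $\de-1$ lies again in this range, but this is false in the paper's convention. Tracking through: $H_{0,\mu}$ corresponds to decay rate $r^{-\mu-3/2}$ on an end, so the paper's domain weight $\mu=\de-1\in(-5/2,-3/2)$ corresponds to Bartnik decay rate $\delta=-\mu-\frac{3}{2}\in(0,1)$, whereas the self-dual Bartnik interval for $n=3$ is $(2-n,0)=(-1,0)$, i.e.\ Sobolev weight $\mu\in(-3/2,-1/2)$ — a different interval. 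Under the $L^2(dx)$ pairing the annihilator of $\mathrm{range}(L)\subset H_{s-2,\de+1}$ lives (after elliptic regularity bootstrap) at Sobolev weight $-\de-1\in(-1/2,1/2)$, which is outside the range where Lemma~\ref{L2 estimate for L} is stated; writing it as $\de^*-1$ gives $\de^*=-\de\in(1/2,3/2)$, not in $(-3/2,-1/2)$. So you cannot "apply Lemma~\ref{L2 estimate for L} to $L^*$ on the appropriate dual weighted space" as stated. The duality approach can be salvaged — the dual weight is non-exceptional (since non-exceptional weights are closed under $\delta\mapsto 2-n-\delta$), and a version of Lemma~\ref{L2 estimate for L} holds there by the same argument using Theorem 1.7 of \cite{Ba2} at the dual weight — but this must be proved, not inherited, and the symmetry rationale you give does not support it. The paper's parametrix construction sidesteps this bookkeeping entirely, which is one advantage of that route.
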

\begin{proof}
From the multiplication lemma \ref{properties of weighted sobolev case2}, we know that $L$ is a bounded linear map $H_{s, \de-1}(\Si)\rightarrow H_{s-2, \de+1}(\Si)$. Standard argument using inequality (\ref{estimates for L}) as in Theorem 1.10 in \cite{Ba2} shows that $N(L)$ is finite-dimensional and $L$ has close range. So $L$ is semi-Fredholm.

To show that $L$ has finite-dimensional co-kernel, we will borrow the techniques in Theorem 5.6 of \cite{L}. First, inequality (\ref{estimates for L-lap on end}) shows that the operator norm of $(L-\lap): H_{s, \de-1}(E_{r})\rightarrow H_{s-2, \de+1}(E_{r})$ is $o(1)$ as $r\rightarrow\infty$. So for large enough $r$, the fact that $\lap$ is Fredholm by Theorem 1.7 in \cite{Ba2} and that the Fredholm property is open w.r.t operator norms show that $L_{i}=\lap+\chi_{i, r}(L-\lap)$ is Fredholm on $\R^{3}$, where clearly $L_{i}=L$ on $E_{2r}$. So there exists a bounded linear operator $S_{i}: H_{s-2, \de+1}(\R^{3})\rightarrow H_{s, \de-1}(\R^{3})$, such that $L_{i}S_{i}=id+K_{i}$ with $K_{i}$ a compact operator. Now $L: H_{s, \de-1}(\Si_{int, 8r})\rightarrow H_{s-2, \de+1}(\Si_{int, 8r})$ is Fredholm since $\Si_{int, 8r}$ is compact, so there exists a Fredholm inverse $S_{0}: H_{s-2, \de+1}(\Si_{int, 8r})\rightarrow H_{s, \de-1}(\Si_{int, 8r})$, such that $LS_{0}=id+K_{0}$, for $K_{0}$ compact operator. Define
\begin{equation}
Su=\chi_{0, 4r}S_{0}u_{0, 8r}+\Si_{i=1}^{l}\chi_{i, 2r}S_{i}u_{i, r},
\end{equation}
which is a bounded linear operator $H_{s-2, \de+1}(\Si)\rightarrow H_{s, \de-1}(\Si)$. Then
a calculation as in (5.6.5) of \cite{L} shows that $LS=id+K$ for $K$ compact operator. So $L$ has finite-dimensional co-kernel.
\end{proof}

The Fredholm index of $L$ is defined to be:
$$i(L, \de-1)=dim\ ker(L, \de-1)-dim\ coker(L, \de-1).$$
By comparing the index of $L$ to the laplacian $\lap_{e}$ of $e$, we can show that $L$ is surjective when $a_{0}\leq 0$.
\begin{lemma}\label{surjectivity of L}
Given $s\leq s_{0}$, $-\frac{3}{2}<\de<-\frac{1}{2}$. Suppose $a_{0}\leq 0$, then $L$ is surjective. Furthermore, $dim\ ker(L, \de-1)=d_{\de-1}=dim\ ker(\lap_{e}, \de-1)$. If we denote $ker(L, \de-1)^{\perp}$ to be the orthogonal compliment of $ker(L, \de-1)$ w.r.t the $L^{2}_{\de-1}$ inner product $\lan\cdot, \cdot\ran_{L^{2}_{\de-1}(\Si)}$ as in definition \ref{def for weighted sobolev space2}, then:
$$L: ker(L, \de-1)^{\perp}\rightarrow H_{s-2, \de+1}(\Si),$$
is an isomorphism.
\end{lemma}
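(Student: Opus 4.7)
The strategy is to compute the Fredholm index of $L$ by homotoping to $\Delta_e$, then rule out a nontrivial cokernel using the sign condition $a_0 \leq 0$, and finally read off the orthogonal-complement statement from general Fredholm theory.

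The plan is to consider the one-parameter family $L_t = (1-t)\Delta_e + tL$ for $t\in[0,1]$. The leading coefficient $a_2^{(t)} = (1-t)e + ta_2$ is uniformly elliptic with constant at least $\min(1,\lambda)$, and the lower-order coefficients $ta_1, ta_0$ satisfy the regularity assumption (\ref{hypothesis for L}) uniformly in $t$. Thus Lemmas \ref{L2 estimate for L} and \ref{Fredholmness of L} apply uniformly and each $L_t:H_{s,\delta-1}(\Si)\to H_{s-2,\delta+1}(\Si)$ is Fredholm. The map $t\mapsto L_t$ is continuous in operator norm via the multiplication estimate (\ref{multiplication inequality}), so by homotopy invariance of the Fredholm index,
\[
i(L,\delta-1) = i(\Delta_e,\delta-1).
\]
Since $\delta-1\in(-5/2,-3/2)$ is a non-exceptional weight for the flat Laplacian, Theorem 1.7 of \cite{Ba2} (applied end-by-end, patched to $\Si_{int}$ as in the proof of Lemma \ref{Fredholmness of L}) shows that $\Delta_e$ is surjective onto $H_{s-2,\delta+1}(\Si)$, so the index equals $d_{\delta-1}=\dim\ker(\Delta_e,\delta-1)$.

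Next I would show that $\mathrm{coker}(L)=0$. Since the range is closed (Lemma \ref{Fredholmness of L}), any element of the cokernel corresponds via the $L^2$ pairing to a distributional solution $\phi$ of the formal adjoint equation $L^{*}\phi = 0$ in the appropriate dual-weight Sobolev space. Elliptic regularity for $L^{*}$ (same principal symbol $a_2^{ij}\partial_i\partial_j$) gives enough smoothness to justify integration by parts. I would then test $L\phi = 0$ against $\phi$ cut off by a radial cutoff $\chi_R$ supported in $\{\si_e\le 2R\}$ together with an appropriate power of $\si_e$, producing, after two integrations by parts, the weighted identity
\[
\int \chi_R^{2}\,\si_e^{2(\delta-1)}\bigl(a_2^{ij}\phi_i\phi_j - a_0\phi^{2}\bigr) + \text{(first-order and cutoff terms)} = 0.
\]
The $a_1$ contribution is absorbed into the elliptic term by Cauchy--Schwarz using the decay $a_1\in H_{s_0,\delta_0+1}$; the cutoff-derivative terms vanish as $R\to\infty$ from the decay of $\phi$ in the dual weight. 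Since $a_0\le 0$ and $a_2$ is uniformly elliptic, both surviving terms are non-negative, forcing $\phi\equiv 0$.

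With surjectivity in hand, $\dim\ker(L,\delta-1) = i(L,\delta-1) = d_{\delta-1}$. The orthogonal complement $\ker(L,\delta-1)^{\perp}$ is a closed complement in $H_{s,\delta-1}(\Si)$ by the finite-dimensionality of the kernel, and the restriction $L:\ker(L,\delta-1)^{\perp}\to H_{s-2,\delta+1}(\Si)$ is a continuous bijection, hence an isomorphism by the open mapping theorem. The main obstacle I anticipate is the weighted-adjoint argument in the surjectivity step: the zero-order coefficient of $L^{*}$ has the form $a_0 + \partial_i\partial_j a_2^{ij} - \partial_i a_1^{i}$, which need not be non-positive even when $a_0\le 0$, so one cannot simply apply a maximum principle to $L^{*}$. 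The weighted energy identity must be arranged carefully so that $a_0\le 0$ provides the sign-definite control while the cutoff boundary contributions vanish in the limit, which is the delicate part.
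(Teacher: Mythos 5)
Your opening step (homotopy $L_t = (1-t)\Delta_e + tL$, homotopy invariance of the Fredholm index, surjectivity of $\Delta_e$ at the weight $\de-1$) matches the paper's argument. After that, however, you diverge: you attempt to kill the cokernel by analyzing the formal adjoint $L^{*}$ directly with a weighted energy identity, whereas the paper instead bounds $\dim\ker(L,\de-1)\leq\dim\ker(\lap_e,\de-1)$ and lets the index equality do the rest. The paper's route uses the asymptotic expansion of kernel elements from \cite{Ba2} (Theorem 1.17): a solution $u\in\ker(L,\de-1)$ has on each end a leading harmonic polynomial of degree $\leq k(\de)=0$, i.e.\ a constant; if those constants all vanish, $u=o(1)$ at infinity, and then the strong maximum principle (this is exactly where $a_0\leq 0$ enters, applied to $L$ itself, not to $L^{*}$) forces $u\equiv 0$. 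So the "leading constant" map $\ker(L,\de-1)\to\R^{l}$ is injective, giving $\dim\ker(L,\de-1)\leq l=\dim\ker(\lap_e,\de-1)$, hence $\mathrm{coker}=0$.

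Your adjoint-energy plan has a genuine gap that the maneuvers you describe will not repair. First, there is an internal inconsistency: you introduce $\phi$ as a solution of $L^{*}\phi=0$ but then propose to test $L\phi=0$; the resulting energy identity you write, $\int\chi_R^2\si_e^{2(\de-1)}(a_2^{ij}\phi_i\phi_j-a_0\phi^2)+\cdots=0$, is the identity one would attempt for $L$, not $L^{*}$, and $\phi$ does not satisfy $L\phi=0$. Second, and more seriously, cokernel representatives live in the dual weighted space $H_{2-s,-\de-4}(\Si)$ under the $L^2$ pairing, whose elements need only decay like $o(r^{\de+5/2})$; for $\de\in(-\frac{3}{2},-\frac{1}{2})$ this permits growth up to $o(r^{2})$, so the cutoff-boundary terms in your $R\to\infty$ limit do \emph{not} vanish for general $\phi$ in that space. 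Third, as you yourself note, the zero-order coefficient of $L^{*}$ is $a_0+\partial_i\partial_j a_2^{ij}-\partial_i a_1^{i}$ and has no sign; you state that the identity "must be arranged carefully so that $a_0\leq 0$ provides the sign-definite control," but give no mechanism for absorbing the second-derivative-of-coefficient terms, and none is apparent. The paper sidesteps all three difficulties by never touching $L^{*}$: the maximum principle is invoked on $L$, for \emph{decaying} kernel elements, where $a_0\leq 0$ applies cleanly and there is no growth or adjoint issue. To repair your argument you would essentially have to rediscover the paper's kernel-dimension count; as written, the surjectivity step is not established.
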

\begin{proof}
Since $L$ can be joint continuously to $\lap_{e}$ by $L_{t}=tL+(1-t)\lap_{e}$, we know $i(L, \de-1)=i(\lap_{e}, \de-1)$. Theorem 6.2 in \cite{L} says that $\lap_{e}$ is surjective when $\de-1<-\frac{1}{2}$. In order to show $L$ is surjective, or equivalently $dim\ coker(L, \de-1)=0$, we only need to show $dim\ ker(L, \de-1)\leq dim\ ker(\lap_{e}, \de-1)$. This comes from the asymptotical expansion given in \cite{Ba2}. For $u\in ker(L, \de-1)$, by Theorem 1.17 in \cite{Ba2}, $Lu=0$ implies that on each end $E_{i}$, there exists a harmonic homogenous function $h_{k}$ of order $k\leq k(\de)$, where $k(\de)=max\{k\in\mZ:\ k\leq -(\de+\frac{3}{2})\}$\footnote{See the definition for $k(\de)$ in \cite{Ba2}. Their $\de$ is the same as $-(\de+\frac{3}{2})$ here.}, such that $u=h_{k}+o(r^{k-\be})$ for $0<\be<\de+\frac{3}{2}$. In our case, $k(\de)=0$. In fact, if $u\neq 0$, there must exist at one end, on which $k\geq 0$. Or the decay implies $u=o(1)$ at infinity on $\Si$, so $u=0$ by maximum principle since $a_{0}\leq 0$. So $dim\ ker(L, \de-1)$ is less or equal to the the number of linearly independent harmonic polynomials of order $\leq k(\de)$ multiplied with the number of ends. It is easy to see that the basis of $ker(\lap_{e}, \de-1)$ is consisted just by functions which have main part the harmonic polynomial on one end, and $O(1/r)$ parts in other ends. So the leading terms shows that $dim\ ker(L, \de-1)\leq dim\ ker(\lap_{e}, \de-1)$. The isomorphism on orthogonal compliment is direct when $L$ is surjective.
\end{proof}

In fact, we can show a uniform norm bound for the inverse of $L$ on $ker(L, \de-1)^{\perp}$.
\begin{lemma}\label{inverse bound for L}
Given $s\leq s_{0}$, $-\frac{3}{2}<\de<-\frac{1}{2}$. Suppose $a_{0}\leq 0$. Denote the inverse of $L: ker(L, \de-1)^{\perp}\rightarrow H_{s-2, \de+1}(\Si)$ by $L^{-1}$, then there exists a constant $C$ depending only on $s_{0}$, $\de_{0}$, the elliptic coefficient $\la$ and the norms $\|a_{2}-e\|_{H_{s_{0}+1, \de_{0}}(\Si)}$, $\|a_{1}\|_{H_{s_{0}, \de_{0}+1}(\Si)}$ and $\|a_{0}\|_{H_{s_{0}-1, \de_{0}+2}(\Si)}$, such that for any $v\in H_{s-2, \de+1}(\Si)$,
\begin{equation}
\|L^{-1}v\|_{H_{s, \de-1}(\Si)}\leq C\|v\|_{H_{s-2, \de+1}(\Si)}.
\end{equation}
\end{lemma}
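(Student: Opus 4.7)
The plan is to deduce the uniform inverse bound by combining the semi-local a priori estimate of Lemma \ref{L2 estimate for L} with a standard compactness/contradiction argument on the orthogonal complement of the kernel.

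First I would argue by contradiction: suppose no such $C$ exists. Then there is a sequence $v_{n}\in H_{s-2,\de+1}(\Si)$ and $u_{n}=L^{-1}v_{n}\in\ker(L,\de-1)^{\perp}$ with
$$\|u_{n}\|_{H_{s,\de-1}(\Si)}=1,\qquad \|v_{n}\|_{H_{s-2,\de+1}(\Si)}\longrightarrow 0.$$
Applying the a priori inequality (\ref{estimates for L}) of Lemma \ref{L2 estimate for L} to $u_{n}$, with the constant $C_{0}$ and radius $r$ furnished there (both depending only on the listed quantities), gives
$$1=\|u_{n}\|_{H_{s,\de-1}(\Si)}\leq C_{0}\bigl(\|v_{n}\|_{H_{s-2,\de+1}(\Si)}+\|u_{n}\|_{H_{s-2}(\Si_{int,2r})}\bigr).$$
Since the first term vanishes in the limit, the strategy is to show that the second term also tends to zero, obtaining a contradiction.

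Next I would pass to weak limits in $H_{s,\de-1}(\Si)$. Because this is a Hilbert space and $\|u_{n}\|_{H_{s,\de-1}}=1$, after extracting a subsequence $u_{n}\rightharpoonup u_{\infty}$ weakly in $H_{s,\de-1}(\Si)$. Two properties of $u_{\infty}$ are needed. First, the orthogonal complement $\ker(L,\de-1)^{\perp}$ is defined by the finite family of continuous linear conditions $\langle\cdot,\phi\rangle_{L^{2}_{\de-1}}=0$ for $\phi\in\ker(L,\de-1)$, hence it is weakly closed, so $u_{\infty}\in\ker(L,\de-1)^{\perp}$. Second, $L:H_{s,\de-1}(\Si)\to H_{s-2,\de+1}(\Si)$ is bounded and linear (by the multiplication Lemma \ref{properties of weighted sobolev case2} applied to the coefficients), hence weakly continuous, and $Lu_{n}=v_{n}\to 0$ strongly, so $Lu_{\infty}=0$. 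Combining, $u_{\infty}\in\ker(L,\de-1)\cap\ker(L,\de-1)^{\perp}=\{0\}$.

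Finally, I would invoke Rellich compactness on the fixed bounded Lipschitz domain $\Si_{int,2r}$: weak convergence $u_{n}\rightharpoonup 0$ in $H_{s,\de-1}(\Si)$ implies weak convergence in $H^{s}(\Si_{int,2r})$, and therefore strong convergence in $H^{s-2}(\Si_{int,2r})$. Hence $\|u_{n}\|_{H_{s-2}(\Si_{int,2r})}\to 0$, and the estimate above yields $1\leq 0$, the desired contradiction. This produces some $C$ with $\|L^{-1}v\|_{H_{s,\de-1}}\leq C\|v\|_{H_{s-2,\de+1}}$, and since every constant appearing in Lemma \ref{L2 estimate for L} depends only on $s_{0}$, $\de_{0}$, the ellipticity $\la$, and the listed norms of $a_{2}-e$, $a_{1}$, $a_{0}$, the resulting $C$ does as well. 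The main subtlety to verify carefully is the weak closedness of $\ker(L,\de-1)^{\perp}$ and the weak continuity of $L$ in weighted spaces; once these are granted, the argument is entirely standard and the rest of the estimate is delivered by Lemmas \ref{L2 estimate for L} and \ref{surjectivity of L}.
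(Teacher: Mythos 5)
Your argument has a genuine gap: it establishes that a finite inverse bound $C(L)$ exists for a \emph{fixed} operator $L$, but it does not deliver the uniformity that the lemma claims, namely that a single $C$ works for \emph{all} operators in the class with the given bounds on the coefficient norms, elliptic coefficient, $s_{0}$ and $\de_{0}$. In your contradiction argument the operator $L$ is held fixed while only the sequence $u_{n}$ varies; what comes out is $C(L)<\infty$ for each $L$, but nothing in the argument controls $\sup_{L}C(L)$ as $L$ ranges over the family. The final sentence, ``since every constant appearing in Lemma \ref{L2 estimate for L} depends only on \ldots, the resulting $C$ does as well,'' is where the reasoning breaks: the constant $C_{0}$ and radius $r$ from Lemma \ref{L2 estimate for L} are indeed uniform, but the constant produced by the compactness step is not quantitatively determined by them — it also encodes the rate at which the compact term $\|u_{n}\|_{H_{s-2}(\Si_{int,2r})}$ is absorbed, which is specific to the fixed $L$ and a priori could degenerate as $L$ varies.

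The paper's proof closes exactly this gap. It assumes no uniform constant exists, which produces a sequence of \emph{operators} $L_{i}$ (with uniformly bounded coefficients and elliptic constant) together with normalized $u_{i}\in\ker(L_{i},\de-1)^{\perp}$ and $\|L_{i}u_{i}\|\to 0$. One then extracts a weak limit $L_{\infty}$ of the coefficients and a strong $H_{s,\de-1}$ limit $u_{\infty}$ of $u_{i}$ (using the a priori estimate, compact embeddings of the coefficients, and the compactness of $H_{s,\de-1}(\Si)\subset H_{s-2}(\Si_{int,2r})$), showing $L_{\infty}u_{\infty}=0$ and $\|u_{\infty}\|=1$. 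The remaining — and essential — subtlety, which your proof does not address because you never vary $L$, is to verify that $u_{\infty}\perp\ker(L_{\infty},\de-1)$: this is done by taking orthonormal bases $\{v_{i,a}\}$ of $\ker(L_{i},\de-1)$, using Lemma \ref{surjectivity of L} to know $\dim\ker(L_{i},\de-1)\equiv d_{\de-1}$ for all $i$, and showing these bases converge to a basis of $\ker(L_{\infty},\de-1)$, so that orthogonality survives passage to the limit. Without this kernel-convergence step the contradiction does not go through even if you ran the argument over a sequence of operators. So the missing ingredients in your proof are (i) varying the operator along the contradicting sequence, and (ii) tracking the kernels of the varying operators (via the constancy of their dimension) so that the orthogonality constraint passes to the limit.
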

\begin{proof}
We only need to show that for any $u\in ker(L, \de-1)^{\perp}$,
$$\|u\|_{H_{s, \de-1}(\Si)}\leq C_{1}\|Lu\|_{H_{s-2, \de+1}(\Si)}$$
for a uniform constant $C_{1}$ depending only on $s_{0}$, $\de_{0}$, the elliptic coefficient $\la$ and the norms $\|a_{2}-e\|_{H_{s_{0}+1, \de_{0}}(\Si)}$, $\|a_{1}\|_{H_{s_{0}, \de_{0}+1}(\Si)}$ and $\|a_{0}\|_{H_{s_{0}-1, \de_{0}+2}(\Si)}$. By contradiction argument, suppose that the statement is wrong, which means that there exists a sequence of operators $L_{i}$ with $a_{i, 0}\leq 0$, uniformly bounded elliptic coefficient $\la_{i}\geq\la_{0}>0$ and uniformly bounded coefficients $\|a_{i, 2}-e\|_{H_{s_{0}+1, \de_{0}}(\Si)}$, $\|a_{i, 1}\|_{H_{s_{0}, \de_{0}+1}(\Si)}$, $\|a_{i, 0}\|_{H_{s_{0}-1, \de_{0}+2}(\Si)}\leq C_{0}$, and a sequence of functions $u_{i}\in ker(L_{i}, \de-1)^{\perp}$, such that $\|u_{i}\|_{H_{s, \de-1}(\Si)}\geq i\|L_{i}u_{i}\|_{H_{s-2, \de+1}(\Si)}$. By re-normalizing, we get a sequence of functions $u_{i}$, with $\|u_{i}\|_{H_{s, \de-1}(\Si)}=1$, while $\|L_{i}u_{i}\|_{H_{s-2, \de+1}(\Si)}\rightarrow 0$. By weak compactness, there exists a subsequence, which we still denote by $L_{i}$, such that the coefficients of $L_{i}$ converges weakly to that of a linear operator $L_{\infty}$ with $\la_{0}e\leq a_{\infty, 2}\leq\la_{0}^{-1}e$, $a_{\infty, 0}\leq 0$ and $\|a_{\infty, 2}-e\|_{H_{s_{0}+1, \de_{0}}(\Si)}$, $\|a_{\infty, 1}\|_{H_{s_{0}, \de_{0}+1}(\Si)}$, $\|a_{\infty, 0}\|_{H_{s_{0}-1, \de_{0}+2}(\Si)}\leq C_{0}$. Using inequality (\ref{estimates for L}), there is a uniform constant $C_{2}$,
\begin{equation}\label{ui-uj}
\begin{split}
\|u_{i}-u_{j}\|_{H_{s, \de-1}(\Si)} &\leq C_{2}(\|L_{i}(u_{i}-u_{j})\|_{H_{s-2, \de+1}(\Si)}+\|u_{i}-u_{j}\|_{H_{s-2}(\Si_{int, 2r})})\\
                                    &\leq C_{2}(\|L_{i}u_{i}\|_{H_{s-2, \de+1}(\Si)}+\|(L_{i}-L_{j})u_{j}\|_{H_{s-2, \de+1}(\Si)}\\
                                    &+\|L_{j}u_{j}\|_{H_{s-2, \de+1}(\Si)}+\|u_{i}-u_{j}\|_{H_{s-2}(\Si_{int, 2r})}).
\end{split}
\end{equation}
Now $\|(L_{i}-L_{j})u_{j}\|_{H_{s-2, \de+1}(\Si)}\leq C(\|a_{2, i}-a_{2, j}\|_{H_{s_{0}, \de^{\pr}-1}(\Si)}+\|a_{1, i}-a_{1, j}\|_{H_{s_{0}-1, \de^{\pr}}(\Si)}+\|a_{0, i}-a_{0, j}\|_{H_{s_{0}-2, \de^{\pr}+1}(\Si)})$ for some $\de_{0}>\de^{\pr}>-\frac{3}{2}$ by multiplication lemma \ref{properties of weighted sobolev case3}. The compact embedding (Lemma 2.1 in \cite{ChoCH}) of $H_{s_{0}+1-i, \de_{0}-1+i}(\Si)\subset H_{s_{0}-i, \de^{\pr}-1+i}(\Si)$ for $i=0,1,2$ imply that $\|(L_{i}-L_{j})u_{j}\|_{H_{s-2, \de+1}(\Si)}\rightarrow 0$ for a subsequence of $\{L_{i}\}$. Together with the compactness of $H_{s, \de-1}(\Si)\subset H_{s-2}(\Si_{int, 2r})$, there exists a subsequence, which we still denote by $u_{i}$, such that $u_{i}$ converge strongly in $H_{s, \de-1}(\Si)$ to a function $u_{\infty}$, with $\|u_{\infty}\|_{H_{s, \de-1}(\Si)}=1$. Furthermore we have that $L_{\infty}u_{\infty}=0$ weakly by the weak convergence, and hence strongly in $H_{s-2, \de+1}(\Si)$ by elliptic regularity.

By Lemma \ref{surjectivity of L}, we know that $dim ker(L_{i}, \de-1)\equiv d_{\de-1}$. We claim that $ker(L_{i}, \de-1)$ converge to a $d_{\de-1}$ dimensional linear subspace of $ker(L_{\infty}, \de-1)$. Let $\{v_{i, a}\}_{a=1}^{d_{\de-1}}$ be an $L^{2}_{\de-1}$ orthogonal basis for $ker(L_{i}, \de-1)$, with $\|v_{i, a}\|_{H_{s, \de-1}(\Si)}=1$. By equation (\ref{ui-uj}),
$$\|v_{i, a}-v_{j, a}\|_{H_{s, \de-1}(\Si)}\leq C(\|(L_{i}-L_{j})v_{j, a}\|_{H_{s-2, \de+1}(\Si)}+\|v_{i, a}-v_{j, a}\|_{H_{s-2}(\Si_{int, 2r})}).$$
Similar argument as above implies that a subsequence of $v_{i, a}$ converge strongly in $H_{s, \de-1}(\Si)$ to some $v_{\infty, a}$. Hence $v_{\infty, a}\in ker(L_{\infty}, \de-1)$, and $\{v_{\infty, a}\}_{a=1}^{d_{\de-1}}$ are also orthogonal in $L^{2}_{\de-1}$ with $\|v_{\infty, a}\|_{H_{s, \de-1}(\Si)}=1$. Since $L_{\infty}$ satisfies all the requirement of Lemma \ref{surjectivity of L}, $dim ker(L_{\infty}, \de-1)=d_{\de-1}$. Hence the limit of $ker(L_{i}, \de-1)$ is exactly the entire $ker(L_{\infty}, \de-1)$. As $u_{i}$ is perpendicular to $ker(L_{i}, \de-1)$ in $L^{2}_{\de-1}$, passing to the limit, we know that $u_{\infty}$ is perpendicular to $ker(L_{\infty}, \de-1)$ in $L^{2}_{\de-1}$ too, which is a contradiction to that $\|u_{\infty}\|_{H_{s, \de-1}(\Si)}=1$ and $L_{\infty}u_{\infty}=0$. So we finish the proof.
\end{proof}


\subsection{Existence of maximal data}

Now let us calculate the linearization of $\mH$ with respect to $u$ at $(g, k, 0)$. Fix a vacuum data $(g, k)\in\VC_{s+2, \de+\frac{1}{2}}(\Si)$ with the unique boost solution $(\V, \ga)$ given by Theorem \ref{boost theorem}. Recall the form (\ref{metric form for gamma}) of $\ga$ in local coordinates $(x^{i}, t)$ of $\Om_{\theta}$. According to the initial data equations (\ref{initial data1})(\ref{initial data2}) for $\ga$, the coefficients restricted to $t=0$ slice are given by:
\begin{equation}
\al|_{\Si}\equiv 1;\ \ \be|_{\Si}\equiv 0.
\end{equation}
In fact, our choice of $\al|_{\Si}$ and $\be|_{\Si}$ implies that $\partial_{t}|_{\Si}$ is the unit normal of $\Si$. Now recall the second variational formula for the mean curvature in section 2 of \cite{Ba1}. Let $X$ be a vector field in a neighborhood of $\Si$ with associated flow $\phi_{s}: \V\rightarrow\V$. Denote $H(s)$ by the mean curvature of $\phi_{s}(\Si)$, then
\begin{equation}\label{2nd variation of mean curvature}
\partial_{s}(H(s))|_{s=0}=-\lap_{g}\lan X, N\ran+\lan X, N\ran(|k|_{g}^{2}+Ric_{\ga}(N, N))+\lan X, \nabla_{g}H\ran,
\end{equation}
where $N$ is the unit normal of $\Si$ insider $\V$, and $Ric_{\ga}$ the Ricci curvature of $\ga$. In our case, $Ric_{\ga}\equiv 0$ by (\ref{EVE}) since our $(\V, \ga)$ is vacuum, and the unit normal $N=\partial_{t}$ on $\Si$. We can choose the vector field to be $X=v\partial_{t}$, where $v$ is a compactly supported smooth scalar function, so $\lan X, \nabla_{g}H\ran=0$. Then $\partial_{s}H(s)|_{s=0}$ is the linearization of $H$ w.r.t $u$, and $\lan X, N\ran=-v$. Now combining all and using Proposition \ref{differentiability w.r.t u}, we have,
\begin{lemma}\label{first variation for Hu at maximal data}
Using notations in Proposition \ref{differentiability w.r.t u}, the Fr\'echet derivative of $\mH(g, k, u)$ with respect to factor $u$ at a vacuum data $(g, k, 0)$ is a linear operator $L_{0}: H_{s, \de-\frac{1}{2}}(\Si)\rightarrow H_{s-2, \de+\frac{3}{2}}(\Si)$ given by:
\begin{equation}
(D_{v}\mH)_{(g, k, 0)}=L_{0}v=(\lap_{g}-|k|_{g}^{2})v.
\end{equation}
\end{lemma}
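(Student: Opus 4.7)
The plan is to combine the general second variational formula for the mean curvature of a hypersurface in a Lorentzian spacetime (equation (\ref{2nd variation of mean curvature})) with the specific properties of the boost evolution at the initial slice, and then invoke Proposition \ref{differentiability w.r.t u} to identify the resulting formal derivative with the Fr\'echet derivative.

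First I would choose the variational vector field $X = v\partial_{t}$ for $v$ a compactly supported smooth function on $\Si$, whose associated flow is $\phi_{s}(x,t) = (x, t+sv(x))$. Under this flow $\phi_{s}(\Si) = Graph_{sv}$, so $H(s) = H_{sv}$ and hence $\partial_{s}H(s)|_{s=0}$ is precisely the directional derivative $(D_{u}\mH)_{(g,k,0)}(v)$. By Proposition \ref{differentiability w.r.t u}, $\mH$ is continuously Fr\'echet differentiable in $u$ on a neighborhood of $0$, so this directional derivative coincides with the Fr\'echet derivative acting on $v$, and once the identity is established for smooth compactly supported $v$, it extends to all of $H_{s,\de-\frac{1}{2}}(\Si)$ by continuity in $v$ of both sides (using the multiplication lemma \ref{properties of weighted sobolev case2} to bound $L_{0}v$ in $H_{s-2,\de+\frac{3}{2}}(\Si)$).

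Next I would evaluate each term in (\ref{2nd variation of mean curvature}) at $s=0$ on the slice $\Si$. The initial conditions (\ref{initial data1})(\ref{initial data2}) give $\al|_{\Si}\equiv 1$ and $\be|_{\Si}\equiv 0$, so by (\ref{unit normal for constant slice}) the future-directed unit normal is $N = T = \partial_{t}$ along $\Si$. Then $\lan X, N\ran_{\ga} = v\ga(\partial_{t},\partial_{t})|_{\Si} = -v$. Since $(\V,\ga)$ solves (EVE), $Ric_{\ga}\equiv 0$. Finally, because $\nabla_{g}H$ is tangent to $\Si$ while $X=v\partial_{t}$ is normal to $\Si$ at $s=0$ by the previous remark, $\lan X, \nabla_{g}H\ran_{\ga} = 0$ on $\Si$. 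Substituting into (\ref{2nd variation of mean curvature}) yields
\begin{equation*}
(D_{u}\mH)_{(g,k,0)}(v) \;=\; -\lap_{g}(-v) + (-v)|k|_{g}^{2} \;=\; (\lap_{g} - |k|_{g}^{2})v,
\end{equation*}
which is the claimed formula.

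The main obstacle here is not the computation but the book-keeping: verifying that $N|_{\Si}=\partial_{t}$ requires using the specific Gaussian-type initial data for $\ga$ (which is what makes the shift and lapse trivial on $\Si$), and justifying that the variational formula (\ref{2nd variation of mean curvature}), which is stated for smooth deformations, transfers to the weighted Sobolev setting requires the continuity of $D_{u}\mH$ established in Proposition \ref{differentiability w.r.t u}. Once these two points are in place the identification of $L_{0}$ is immediate.
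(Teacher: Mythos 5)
Your proposal matches the paper's proof essentially verbatim: both apply the second variational formula (\ref{2nd variation of mean curvature}) with $X = v\partial_{t}$, use the Gaussian initial conditions $\al|_{\Si}\equiv 1$, $\be|_{\Si}\equiv 0$ to identify $N=\partial_{t}$ on $\Si$, invoke $Ric_{\ga}\equiv 0$ from (EVE), and observe that $\lan X, \nabla_{g}H\ran=0$ because $X$ is purely normal while $\nabla_{g}H$ is tangential. You have merely made explicit the flow $\phi_{s}(x,t)=(x,t+sv(x))$ identifying $\partial_{s}H(s)|_{s=0}$ with the directional derivative, and the density argument extending from compactly supported smooth $v$ to all of $H_{s,\de-\frac{1}{2}}(\Si)$ via the continuity established in Proposition \ref{differentiability w.r.t u} --- both of which the paper leaves implicit in its closing phrase ``combining all and using Proposition \ref{differentiability w.r.t u}''.
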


Now let us focus on the operator $L_{0}$. $L_{0}$ is in fact Fredholm and surjective by Lemma \ref{Fredholmness of L} and Lemma \ref{surjectivity of L}. By making use the fact that $L_{0}$ has finite-dimensional kernel and is surjective, we can get the existence of solutions of $\mH(g, k, u)=0$ for $(g, k)$ with small trace $tr_{g}k$ by a perturbation method, but no uniqueness due to the existence of non-trivial kernel $ker(L_{0}, \de-\frac{1}{2})$. We will give an existence and uniqueness theorem in the orthogonal compliment of the kernel in order to find symmetry preserving solutions in the following section. Let us first give a Quantitative Inverse Function Theorem motivated by \cite{R}.
\begin{theorem}\label{quantitative inverse function theorem}
Let $X$, $Y$ be Banach spaces, and $U\subset X$ an open set. Suppose $F: U\rightarrow Y$ is a continuous map, and has Fr\'echet derivative w.r.t $x$, such that $\frac{\partial F}{\partial x}$ is continuous. For a point $x_{0}\in U$, with $F(x_{0})=y_{0}$. Suppose $\frac{\partial F}{\partial x}(x_{0}): X\rightarrow Y$ is invertible, and $\|\big[\frac{\partial F}{\partial x}(x_{0})\big]^{-1}\|\leq C$. Assume that we can find $r_{0}>0$, such that for any $x\in B_{r_{0}}(x_{0})\subset U$,
\begin{equation}\label{condition for IFT}
\|\frac{\partial F}{\partial x}(x)-\frac{\partial F}{\partial x}(x_{0})\|\leq\frac{1}{2C}.
\end{equation}
Then for any $y\in Y$ with
$$|y-y_{0}|_{Y}<\frac{r_{0}}{2C},$$
there exist a unique $x\in B_{r_{0}}(x_{0})$, such that $F(x)=y$.
\end{theorem}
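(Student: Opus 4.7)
The plan is to reformulate $F(x) = y$ as a fixed-point problem and apply the Banach contraction mapping principle, making all constants explicit. Let $L_0 = \frac{\partial F}{\partial x}(x_0)$ and define the map
\begin{equation*}
T: \bar{B}_{r_0}(x_0) \to X, \qquad T(x) = x - L_0^{-1}\big(F(x) - y\big).
\end{equation*}
By construction, $T(x) = x$ if and only if $F(x) = y$, so it suffices to produce a unique fixed point of $T$ in $\bar{B}_{r_0}(x_0)$.

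The first key step is to verify that $T$ is a $\tfrac{1}{2}$-contraction on $\bar{B}_{r_0}(x_0)$. For $x_1, x_2 \in \bar{B}_{r_0}(x_0)$, I would write
\begin{equation*}
T(x_1) - T(x_2) = L_0^{-1} \int_0^1 \Big(L_0 - \frac{\partial F}{\partial x}(x_2 + t(x_1 - x_2))\Big)(x_1 - x_2)\, dt,
\end{equation*}
using the fundamental theorem of calculus for Fr\'echet derivatives (which is valid because $\frac{\partial F}{\partial x}$ is continuous). Since $B_{r_0}(x_0)$ is convex, the segment stays inside the ball, so hypothesis (\ref{condition for IFT}) together with $\|L_0^{-1}\| \leq C$ yields $\|T(x_1) - T(x_2)\| \leq C \cdot \tfrac{1}{2C} \|x_1 - x_2\| = \tfrac{1}{2}\|x_1 - x_2\|$.

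The second step is to check the self-mapping property. From $F(x_0) = y_0$,
\begin{equation*}
\|T(x_0) - x_0\| = \|L_0^{-1}(y - y_0)\| \leq C\|y - y_0\| < \tfrac{r_0}{2},
\end{equation*}
by the hypothesis on $y$. Combining this with the contraction estimate, for any $x \in \bar{B}_{r_0}(x_0)$,
\begin{equation*}
\|T(x) - x_0\| \leq \|T(x) - T(x_0)\| + \|T(x_0) - x_0\| \leq \tfrac{1}{2}\|x - x_0\| + \tfrac{r_0}{2} \leq r_0,
\end{equation*}
so $T$ maps $\bar{B}_{r_0}(x_0)$ into itself.

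Finally, since $X$ is a Banach space and $\bar{B}_{r_0}(x_0)$ is a complete metric space, the Banach fixed point theorem yields a unique $x \in \bar{B}_{r_0}(x_0)$ with $T(x) = x$, i.e. $F(x) = y$. In fact the estimate $\|T(x_0) - x_0\| < \tfrac{r_0}{2}$ combined with the contraction constant $\tfrac{1}{2}$ shows the fixed point lies in the open ball $B_{r_0}(x_0)$, matching the statement. There is no real obstacle here; the only subtlety is keeping track of the constants so that the hypothesis $|y - y_0| < r_0/(2C)$ is exactly what is needed for both the contraction to close up and the iterates to remain in the domain of validity of (\ref{condition for IFT}).
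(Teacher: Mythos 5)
Your proof is correct and follows essentially the same route as the paper: reformulate $F(x)=y$ as a fixed point of the map $T(x)=x-L_0^{-1}(F(x)-y)$, verify the contraction and self-mapping properties using hypothesis (\ref{condition for IFT}), and apply the Banach fixed point theorem. The only cosmetic differences are that you center the ball at $x_0$ rather than $0$, you use the integral form of the mean value inequality (which is in fact the cleaner and rigorously valid version in Banach spaces, as the pointwise mean value theorem does not hold for vector-valued maps), and you derive the self-mapping property by combining $\|T(x_0)-x_0\|<r_0/2$ with the contraction estimate rather than estimating $\|T(x)-x_0\|$ directly; all of these are mathematically equivalent to what the paper does.
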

\begin{proof}
Fix a $y\in B_{r_{0}/2C}(y_{0})\subset Y$. Let us consider the map $T: B_{r_{0}}(0)\subset X\rightarrow Y$, defined by
$$T(x)=x-[\frac{\partial F}{\partial x}(x_{0})\big]^{-1}(F(x_{0}+x)-y).$$
$x$ is a fixed point if and only if $F(x_{0}+x)=y$. So let us use the fixed point theorem to find a fixed point for $T$ on $B_{r_{0}}(0)$. first, for any $x_{1}, x_{2}\in B_{r_{0}}(0)$,
\begin{equation}
\begin{split}
|T(x_{1})-T(x_{2})|_{X} &=|(x_{1}-x_{2})-[\frac{\partial F}{\partial x}(x_{0})\big]^{-1}(F(x_{0}+x_{1})-F(x_{0}+x_{2}))|_{X}\\
                        &\leq\|[\frac{\partial F}{\partial x}(x_{0})\big]^{-1}\|\cdot|\frac{\partial F}{\partial x}(x_{0})(x_{1}-x_{2})-\frac{\partial F}{\partial x}(x_{0}+\bar{x})(x_{1}-x_{2})|_{Y}\\
                        &\leq C\|\frac{\partial F}{\partial x}(x_{0})-\frac{\partial F}{\partial x}(x_{0}+\bar{x})\|\cdot|x_{1}-x_{2}|_{X}\\
                        &\leq C\frac{1}{2C}|x_{1}-x_{2}|_{X}\leq\frac{1}{2}|x_{1}-x_{2}|_{X},
\end{split}
\end{equation}
where we used the mean value theorem in the first $``\leq"$, and condition (\ref{condition for IFT}) in the third $``\leq"$. So $T$ is a contraction map on $B_{r_{0}}(0)$. Next, for any $x\in B_{r_{0}}(0)$, and $|y-y_{0}|_{Y}<\frac{r_{0}}{2C}$,
\begin{equation}
\begin{split}
|T(x)|_{X} & \leq\|[\frac{\partial F}{\partial x}(x_{0})\big]^{-1}\|\cdot|\frac{\partial F}{\partial x}(x_{0})x-(F(x_{0}+x)-F(x_{0}))-(y-F(x_{0}))|_{Y}\\
           & \leq C(|\big(\frac{\partial F}{\partial x}(x_{0})-\frac{\partial F}{\partial x}(x_{0}+\bar{x})\big)x|_{X}+|y-F(x_{0})|_{Y})\\
           & \leq C(\|\frac{\partial F}{\partial x}(x_{0})-\frac{\partial F}{\partial x}(x_{0}+\bar{x})\|\cdot|x|_{X}+|y-F(x_{0})|_{Y})\\
           & < C(\frac{1}{2C}r_{0}+\frac{r_{0}}{2C}) < r_{0},
\end{split}
\end{equation}
where we use condition (\ref{condition for IFT}) in the last $``<"$. So $T$ maps $B_{r_{0}}(0)$ to $B_{r_{0}}(0)$. By applying the Contraction Mapping Theorem to $T: B_{r_{0}}(0)\rightarrow B_{r_{0}}(0)$, we finish the proof.
\end{proof}
\begin{remark}
This can be viewed as a carefully reworking of the proof of Theorem 1.2.1 in \cite{Cha}. Theorem 3.1 and Theorem 3.2 in \cite{R} also gave a proof about the quantitative inverse function theorem.
\end{remark}

\begin{theorem}\label{main theorem first part}
For $s\geq 4$, $-2<\de<-1$. Fix a 3-manifold $(\Si, e)$ which is Euclidean at infinity and a $\la>0$. Given a vacuum data $(g, k)\in\VC_{s+2, \de+\frac{1}{2}}(\Si)$, with $g\geq \la e$, there is an $\ep>0$ and a $\rho^{\pr}>0$ small enough, depending only on the norms $\|g-e\|_{H_{s+2, \de+\frac{1}{2}}(\Si)}+\|k\|_{H_{s+1, \de+\frac{3}{2}}(\Si)}$ and the elliptic constant $\la$, such that if $\|tr_{g}k\|_{H_{s-2, \de+\frac{3}{2}}(\Si)}\leq\ep$, there exists a unique function $u\in ker(L_{0}, \de-\frac{1}{2})^{\perp}$ with $\|u\|_{H_{s, \de-\frac{1}{2}}(\Si)}\leq\rho^{\pr}$, such that $u$ is a solution of the maximal surface equation (\ref{maximal surface equation}).
\end{theorem}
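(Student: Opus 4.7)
The plan is to apply the Quantitative Inverse Function Theorem (Theorem \ref{quantitative inverse function theorem}) to the map
\begin{equation*}
F : u \longmapsto \mH(g,k,u),
\end{equation*}
viewed as a map between the Banach spaces $X := \ker(L_0,\de-\tfrac{1}{2})^{\perp}\subset H_{s,\de-\frac{1}{2}}(\Si)$ and $Y := H_{s-2,\de+\frac{3}{2}}(\Si)$, with base point $x_0=0$. First I fix $\rho>0$ small enough so that Conditions (A),(B) of (\ref{condition (A)})(\ref{condition (B)}) hold on $\B_\rho\subset H_{s,\de-\frac{1}{2}}(\Si)$; Proposition \ref{differentiability w.r.t u} then guarantees that $F$ is $C^1$ on $\B_\rho\cap X$ into $Y$. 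At $u=0$ the graph is just the initial slice, and the Cauchy data (\ref{initial data1})(\ref{initial data2}) force $\al|_\Si=1$ and $\be|_\Si=0$, so that the formula (\ref{Hu1 in local coordinates}) for the mean curvature collapses to $F(0)=y_0=tr_g k$, whose norm is exactly the quantity controlled by the hypothesis.

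Next I need $D_uF(0):X\to Y$ to be a bounded isomorphism with an explicit bound on the inverse. By Lemma \ref{first variation for Hu at maximal data}, $D_uF(0)=L_0=\lap_g-|k|_g^2$. Using the multiplication Lemma \ref{properties of weighted sobolev case2}, the coefficients of $L_0$ satisfy (\ref{hypothesis for L}) with $s_0=s+1$, $\de_0=\de+\frac{1}{2}$: the leading term $g^{-1}$ lies in $e^{-1}+H_{s+2,\de+\frac{1}{2}}(\Si)$ with elliptic constant controlled by $\la$ and $\|g-e\|_{H_{s+2,\de+\frac{1}{2}}}$, the first-order terms lie in $H_{s+1,\de+\frac{3}{2}}(\Si)$, and the zero-order term $-|k|_g^2$ lies in $H_{s,\de+\frac{5}{2}}(\Si)$, where the last inclusion uses precisely $\de>-2$. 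Because $-|k|_g^2\leq 0$ and $\de+\frac{1}{2}\in(-\frac{3}{2},-\frac{1}{2})$, Lemma \ref{surjectivity of L} shows that $L_0:X\to Y$ is an isomorphism, and Lemma \ref{inverse bound for L} provides a uniform bound $\|L_0^{-1}\|_{L(Y,X)}\leq C$ depending only on $\la$, $\|g-e\|_{H_{s+2,\de+\frac{1}{2}}(\Si)}$, and $\|k\|_{H_{s+1,\de+\frac{3}{2}}(\Si)}$.

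With $C$ in hand, Proposition \ref{differentiability w.r.t u} supplies the continuity of $u\mapsto D_uF(u)$ into $L(H_{s,\de-\frac{1}{2}}(\Si),H_{s-2,\de+\frac{3}{2}}(\Si))$, so there is $r_0\in(0,\rho]$, depending only on the same quantities, with
\begin{equation*}
\|D_uF(u)-D_uF(0)\|_{L(X,Y)}\leq\frac{1}{2C}\qquad\text{for all }u\in B_{r_0}(0)\cap X.
\end{equation*}
This is exactly hypothesis (\ref{condition for IFT}). Setting $\ep:=r_0/(2C)$ and $\rho':=r_0$, Theorem \ref{quantitative inverse function theorem} applied with $x_0=0$ and $y_0=tr_g k$ yields: whenever $\|tr_g k\|_{H_{s-2,\de+\frac{3}{2}}(\Si)}\leq\ep$, the target $y=0$ lies in $B_{\ep}(y_0)\subset Y$, hence there exists a unique $u\in B_{r_0}(0)\cap X$ with $F(u)=0$, i.e.\ a solution of (\ref{maximal surface equation}) in the orthogonal complement of $\ker(L_0,\de-\frac{1}{2})$.

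The main obstacle is the structural one already handled by Section \ref{perturbatoin method}: the linearized maximal surface operator $L_0$ on $(\Si,g)$ generically has a nontrivial finite-dimensional kernel on the decay class $H_{s,\de-\frac{1}{2}}(\Si)$ (corresponding to growth like a harmonic polynomial of degree zero on each end), so the naive IFT on the full space fails; one must restrict to $X$ and exploit both surjectivity (Lemma \ref{surjectivity of L}) and the uniform control on $L_0^{-1}$ (Lemma \ref{inverse bound for L}) in order to obtain estimates depending only on $\la$ and the data norms, not on $tr_g k$ itself. Aside from that, the verification amounts to carefully tracking weights through the multiplication and composition lemmas already developed.
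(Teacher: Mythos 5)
Your proposal follows essentially the same route as the paper: restrict the mean curvature map to the orthogonal complement of $\ker(L_0,\de-\tfrac12)$, identify $D_u\mH(g,k,0)=L_0=\lap_g-|k|_g^2$ via Lemma \ref{first variation for Hu at maximal data}, invoke Lemma \ref{surjectivity of L} (surjectivity since $-|k|_g^2\le 0$) and Lemma \ref{inverse bound for L} for a uniform bound on $L_0^{-1}$, and then close with Theorem \ref{quantitative inverse function theorem} using $\mH(g,k,0)=tr_g k$ as the small data.

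There is one place where you gloss over the step that actually carries the content of the ``uniformity'' claim. You write that continuity of $u\mapsto D_u F(u)$ (Proposition \ref{differentiability w.r.t u}) yields an $r_0\in(0,\rho]$ ``depending only on the same quantities'' such that $\|D_uF(u)-D_uF(0)\|\le \tfrac{1}{2C}$ on $B_{r_0}(0)$. Continuity alone gives the existence of \emph{some} $r_0>0$, but not that it can be taken to depend only on $\la$ and $\|g-e\|_{H_{s+2,\de+1/2}}+\|k\|_{H_{s+1,\de+3/2}}$ — that uniformity is precisely what the theorem statement asserts and is where the paper does its remaining work. The paper establishes it quantitatively: for each coefficient $a$ of $\ga,\partial\ga,\partial^2\ga$ appearing in $D_u(H_u)$, Newton--Leibniz gives $a(x,u(x))-a(x,0)=\bigl(\int_0^1\partial_t a(x,\tau u(x))\,d\tau\bigr)u(x)$, and Lemma \ref{composition at x u} together with the multiplication lemma bounds $\|a(\cdot,u)-a(\cdot,0)\|$ by $C_3\|u\|_{H_{s,\de-1/2}}$ with $C_3$ controlled by $\|\ga-\ti\eta\|_{H_{s+2,\de}(\Om_\theta)}$ and $\rho$; this Lipschitz-type estimate for the derivative is what makes $r_0$ (hence $\rho'$ and $\ep$) depend only on the stated data. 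Your proposal should replace the appeal to bare continuity with this quantitative modulus. Also, since Theorem \ref{quantitative inverse function theorem} requires the strict inequality $|y-y_0|<\tfrac{r_0}{2C}$, set $\ep<\tfrac{r_0}{2C}$ rather than $\ep=\tfrac{r_0}{2C}$ so that $\|tr_g k\|\le\ep$ suffices.
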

\begin{proof}
For the given $(g, k)\in\VC_{s+2, \de+\frac{1}{2}}(\Si)$ with $\theta$ the boost ratio, we can choose a $\rho$ ball $\B_{\rho}\subset H_{s, \de-\frac{1}{2}}(\Si)$, with $\rho$ small enough depending only on $\theta$, $\|g-e\|_{H_{s+2, \de+\frac{1}{2}}}+\|k\|_{H_{s+1, \de+\frac{3}{2}}}$ and $\la$ as in Proposition \ref{differentiability w.r.t u}. Then the map $\mH$ is continuously differentiable w.r.t. $u$ as a map $\B_{\rho}\cap ker(L_{0}, \de-\frac{1}{2})^{\perp}\rightarrow H_{s-2, \de+\frac{3}{2}}(\Si)$, and the Fr\'echet derivative is $(D_{u}\mH)_{(g_, k, 0)}=L_{0}v=(\lap_{g}-|k|_{g}^{2})v$ by Lemma \ref{first variation for Hu at maximal data}. The coefficient of $L_{0}$ satisfies the hypothesis (\ref{hypothesis for L}), where $s_{0}=s+1$ and $\de_{0}=\de+\frac{1}{2}$ by multiplication lemma \ref{properties of weighted sobolev case2}, the elliptic constant equals to $\la$ and $\|a_{0, 2}-e\|_{H_{s+2, \de+\frac{1}{2}}(\Si)}$, $\|a_{0, 1}\|_{H_{s+1, \de+\frac{3}{2}}(\Si)}$, $\|a_{0, 0}\|_{H_{s, \de+\frac{5}{2}}(\Si)}$ are bounded from above by a constant depending only on $\|g-e\|_{H_{s+2, \de+\frac{1}{2}}(\Si)}$ and $\|k\|_{H_{s+1, \de+\frac{3}{2}}(\Si)}$. So $(D_{u}\mH)_{(g, k, 0)}$ is an isomorphism $ker(L_{0}, \de-\frac{1}{2})^{\perp}\rightarrow H_{s-2, \de+\frac{3}{2}}(\Si)$ by Lemma \ref{surjectivity of L}, since $a_{0, 0}=-|k|_{g}^{2}\leq 0$. Now we will show that the conditions in the quantitative Inverse Function Theorem \ref{quantitative inverse function theorem} are satisfied. By Lemma \ref{inverse bound for L}, there exists a constant $C_{0}$ depending only on $\la$, $\|a_{0, 2}-e\|_{H_{s+2, \de+\frac{1}{2}}(\Si)}$, $\|a_{0, 1}\|_{H_{s+1, \de+\frac{3}{2}}(\Si)}$, $\|a_{0, 0}\|_{H_{s, \de+\frac{5}{2}}(\Si)}$, such that,
$$\|L_{0}^{-1}\|_{L(H_{s-2, \de+\frac{3}{2}}(\Si),\ ker(L_{0}, \de-\frac{1}{2})^{\perp})}\leq C_{0}.$$
Abbreviate the operator norm $\|\cdot\|_{L(H_{s, \de-\frac{1}{2}}(\Si),\ H_{s-2, \de+\frac{3}{2}}(\Si))}=\|\cdot\|$.
Let us study $\|D_{u}\mH(g, k, u)-D_{u}\mH(g, k, 0)\|$. Fix the boost evolution $(\Om_{\theta}, \ga)$ of $(g, k)$, with $\|\ga-\ti{\eta}\|_{H_{s+2, \de}(\Om_{\theta})}$ uniformly bounded by a constant depending only on $\la$ and $\|g-e\|_{H_{s+2, \de+\frac{1}{2}}(\Si)}+\|k\|_{H_{s+1, \de+\frac{3}{2}}(\Si)}$. Then $D_{u}\mH(g, k, u)$ is the first variation $D_{u}(H_{u})$ of $H_{u}$ w.r.t. $u$ inside $(\Om_{\theta}, \ga)$. From the formula of $H_{u}$ in (\ref{Hu1 in local coordinates}), we know that $D_{u}(H_{u})$ is a second order differential operator. The coefficients of $D_{u}(H_{u})$ are constituted by algebraic expressions of $\partial u$, $\partial^{2}u$ and components of $\ga$, $\partial\ga$, $\partial^{2}\ga$ evaluated at $(x, u(x))$. Let $a$ be any component of $\partial^{2}\ga$(similar for $\ga$ and $\partial\ga$), using the Newton-Leibniz formula,
$$a(x, u(x))-a(x, 0)=\big(\int_{\tau=0}^{1}\partial_{t}a(x, \tau u(x))d\tau\big)u(x),$$
where $\partial_{t}a(x, u(x))$ has uniform $H_{s-2, \de+\frac{7}{2}}(\Si)$ norm depending only on $\|\partial^{3}\ga\|_{H_{s-1, \de+3}(\Om_{\theta})}$ and $\rho$ by Lemma \ref{composition at x u}. So $\|a(x, u(x))-a(x, 0)\|_{H_{s-2, \de+\frac{5}{2}}(\Si)}\leq C_{3}\|u\|_{H_{s, \de-\frac{1}{2}}(\Si)}$ by multiplication lemma \ref{properties of weighted sobolev case3}, where $C_{3}$ depends only on $\|\ga-\ti{\eta}\|_{H_{s+2, \de}(\Om_{\theta})}$ and $\rho$. Hence by comparing the coefficients of $D_{u}\mH(g, k, u)$ and $D_{u}\mH(g, k, 0)$, we can choose $\|u\|_{H_{s, \de-\frac{1}{2}}(\Si)}\leq\rho^{\pr}$ with $\rho^{\pr}$ small enough, depending only on $\|\ga-\ti{\eta}\|_{H_{s+2, \de}(\Om_{\theta})}$ and $C_{0}$, such that,
\begin{equation}
\|D_{u}\mH(g, k, u)-D_{u}\mH(g, k, 0)\|\leq\frac{1}{2 C_{0}}.
\end{equation}
For the $\rho^{\pr}$ chosen above, if we take $\ep<\frac{\rho^{\pr}}{2C_{0}}$, then
$$\|0-\mH(g, k, 0)\|_{H_{s-2, \de+\frac{3}{2}}(\Si)}=\|tr_{g}k\|_{H_{s-2, \de+\frac{3}{2}}(\Si)}<\frac{\rho^{\pr}}{2 C_{0}}.$$
Now by the Quantitative Inverse Function Theorem \ref{quantitative inverse function theorem}, if we choose the $\ep$ and $\rho^{\pr}$ as above, where $\ep$ and $\rho^{\pr}$ depend only on $\la$, $\|g-e\|_{H_{s+2, \de+\frac{1}{2}}(\Si)}$ and $\|k\|_{H_{s+1, \de+\frac{3}{2}}(\Si)}$, there exists a unique $u\in\B_{\rho^{\pr}}\cap ker(L_{0}, \de-\frac{1}{2})^{\perp}$, such that $u$ solves $\mH(u)=0$.
\end{proof}


\subsection{Proof of the main Theorems}\label{Proof of main theorem}

Here we will study the properties of the maximal graph gotten above. We will improve the regularity of the solution using a bootstrap argument, and show that the ADM mass of the maximal graph is the same as the given data. Moreover the maximal graph can be chosen to be axisymmetric if $(g, k)$ is axisymmetric, and the angular momentum of the maximal graph is the same as $(g, k)$.

In Theorem \ref{main theorem first part}, the solution $u$ has only $s$ weak derivatives due to the contraction mapping principal. In fact, by exploring the structure of the mean curvature operator (\ref{Hu1 in local coordinates}), we can gain more regularity for $u$.

\begin{lemma}\label{regularity of the maximal surface}
\emph{(Regularity analysis)}. In Theorem \ref{main theorem first part}, the solution $u\in H_{s+2, \de-\frac{1}{2}}(\Si)$. Denote $M=Graph_{u}$, and let $g_{M}$ be the metric and $k_{M}$ the second fundamental form induced by $M\subset(\V, \ga)$, then $(g_{M}, k_{M})\in\VC_{s+1, \de+\frac{1}{2}}(\Si)$.
\end{lemma}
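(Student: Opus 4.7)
The plan is an elliptic bootstrap on the quasilinear PDE $\mH(g, k, u) = 0$ to gain two derivatives on $u$, followed by a direct reading of the regularity of $(g_M, k_M)$ from the graph formulas, with the vacuum constraints coming for free via Gauss--Codazzi since $(\V, \ga)$ is Ricci-flat by Theorem \ref{boost theorem}.

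First I would isolate the principal part of (\ref{Hu1 in local coordinates}) and rewrite the equation in the quasilinear form
\begin{equation}
A_u^{ij}\,\partial_i\partial_j u = G_u(x, u, \partial u),
\end{equation}
where $A_u^{ij}$ is an algebraic combination of $(g_M)^{ij}$, $\al$ and $\partial u$ on $Graph_u$, and $G_u$ gathers the lower-order contributions built from $u$, $\partial u$ and from $\ga$, $\partial\ga$ evaluated at $(x, u(x))$. Condition (B) of (\ref{condition (B)}) together with the lower bound $g \geq \la e$ ensures that $A_u^{ij}$ is uniformly elliptic with constant controlled only by $\la$ and the norms of $(g, k)$. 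Using $(\ga - \ti{\eta}) \in H_{s+2, \de}(\Om_\theta)$ from Theorem \ref{boost theorem}, Lemma \ref{composition at x u} chained with the restriction Lemma \ref{restriction} places the coefficients of $\ga$ and $\partial\ga$ restricted to $Graph_u$ into $H_{s-1, \de+\frac{1}{2}}(\Si)$ and $H_{s-1, \de+\frac{3}{2}}(\Si)$ respectively (the cap $s^{\pr} \leq s_1+1$ in Lemma \ref{composition at x u} being saturated by the starting regularity $u \in H_{s, \de-\frac{1}{2}}$). Multiplication Lemma \ref{properties of weighted sobolev case2} then places $(A_u - e)$ and $G_u$ into the regularity class required to apply the elliptic estimate underlying Lemma \ref{L2 estimate for L}, and algebraically solving for $\partial_i\partial_j u$ yields $u \in H_{s+1, \de-\frac{1}{2}}(\Si)$. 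A second iteration, now with the improved $u$ raising the cap in Lemma \ref{composition at x u}, produces the target $u \in H_{s+2, \de-\frac{1}{2}}(\Si)$.

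Once $u \in H_{s+2, \de-\frac{1}{2}}(\Si)$, the graph metric $g_M$ from (\ref{graph metric}) is a polynomial expression in $\partial u \in H_{s+1, \de+\frac{1}{2}}(\Si)$ and in the components of $\ga$ restricted to $Graph_u$, which now sit in $H_{s+1, \de+\frac{1}{2}}(\Si)$ by the same composition/restriction chain; the Banach algebra property of Lemma \ref{properties of weighted sobolev case2} then gives $(g_M - e) \in H_{s+1, \de+\frac{1}{2}}(\Si)$. The second fundamental form $k_M$ involves in addition one derivative of the unit normal of $Graph_u$, hence $\partial^2 u \in H_{s, \de+\frac{3}{2}}(\Si)$ together with $\partial\ga(x, u(x)) \in H_{s, \de+\frac{3}{2}}(\Si)$, so multiplication gives $k_M \in H_{s, \de+\frac{3}{2}}(\Si)$. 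Since $(\V, \ga)$ satisfies (EVE), the Gauss--Codazzi identities for the spacelike $M = Graph_u$ reduce to precisely the vacuum constraint equations (\ref{VCE}), and therefore $(g_M, k_M) \in \VC_{s+1, \de+\frac{1}{2}}(\Si)$.

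The most delicate point is the interplay between the cap $s^{\pr} \leq s_1+1$ in Lemma \ref{composition at x u} and the number of bootstrap iterations required: because restriction to $Graph_u$ loses one full order of regularity and one cannot exceed the regularity of $u$ itself when composing, only the two available orders of regularity of $\ga$ above the threshold can actually be exploited, which is why the bootstrap terminates at $H_{s+2, \de-\frac{1}{2}}(\Si)$ and the final induced data sits one order below $(g, k)$, as anticipated in the remark following Theorem \ref{main theorem}.
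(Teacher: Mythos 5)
Your argument follows the same route as the paper's: rewrite $\mH(g,k,u)=0$ in the form $(g_M)^{ij}(x,u(x))\,u_{ij}=f(x)$ with $f$ built from $\partial u$ and from $\ga$, $\partial\ga$ restricted to the graph, establish ellipticity from the spacelike condition, apply weighted elliptic regularity to gain one order, and bootstrap once more with the ceiling dictated by the restriction of $\partial\ga$; the reading of $(g_M,k_M)$ from (\ref{graph metric}) and (\ref{ku}) via multiplication is likewise the paper's step, and your explicit Gauss--Codazzi remark is a correct justification that the paper leaves implicit. One caution on the key step: Lemma \ref{L2 estimate for L} is only an a priori estimate, valid for $u$ already known to lie in $H_{s,\de-1}(\Si)$, and its underlying Theorem 1.7 of \cite{Ba2} concerns the flat Laplacian on $\R^{3}$; what actually closes the regularity gain is the weighted elliptic regularity theorem for variable-coefficient operators on manifolds Euclidean at infinity, namely Theorem 6.1 in \cite{ChoCH}, which is what the paper cites. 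Also, the phrase ``algebraically solving for $\partial_i\partial_j u$'' is not what happens: $A_u^{ij}\partial_i\partial_j u = G_u$ is a single scalar equation and the full Hessian is recovered by elliptic regularity, not by algebra. With those two references corrected, your proposal matches the paper's proof.
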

\begin{proof}
In the local coordinates formula (\ref{Hu1 in local coordinates}), we can collect together all the terms containing $\partial^{2}_{ij}u$, then the maximal surface equation $\mH(u)=0$ can be rewritten as a linear second order elliptic equation for $u$ with $\partial u$ and $u$ terms as coefficients:
$$(g^{M})^{ij}(x, u(x))u_{ij}=f(x),$$
where $f(x)$ is a polynomial of $g^{M}(x, u(x))$, $\partial u$, $\ga(x, u(x))$ and $(\partial\ga)(x, u(x))$. First the spacelike property of $M=Graph_{u}$ implies that $(g^{M})^{ij}$ is elliptic. Furthermore, $(g^{M})^{ij}(x, u(x))-e^{ij}(x)=\ga^{ij}-e^{ij}+\frac{\nu^{2}}{\al^{2}}(\be^{i}-\al U^{i})(\be^{j}-\al U^{j})\in H_{s-1, \de+\frac{1}{2}}(\Si)$, $f(x)\in H_{s-1, \de+\frac{3}{2}}(\Si)$ by the argument in the proof of Proposition \ref{differentiability w.r.t u}, Lemma \ref{composition at x u} and the Banach algebra property in Lemma \ref{properties of weighted sobolev case3}. Since $(g^{M})^{ij}$ lie in $C^{0}$ and $H_{s-1}$ locally, $u\in (H_{s+1})_{loc}(\Si)$ by standard elliptic regularity theory. Furthermore, the linear operator $Lu=(g^{M})^{ij}\partial^{2}_{ij}u$ satisfies the hypothesis of the weighted elliptic regularity Theorem 6.1 in \cite{ChoCH} since $s\geq 4$, hence $u\in H_{s+1, \de-\frac{1}{2}}(\Si)$ by Theorem 6.1 in \cite{ChoCH}. Now we can bootstrap this process. In fact, by the composition Lemma \ref{composition at x u}, the right hand side $f(x)$ lies in at most $H_{s, \de+\frac{3}{2}}(\Si)$ since there are $\partial\ga(x, u(x))$ terms. So bootstrap ends when $u\in H_{s+2, \de-\frac{1}{2}}(\Si)$. 

On the graph $M$, $(g_{M})_{ij}=(g_{ij}+\be_{i}u_{j}+\be_{j}u_{i}-(\al^{2}-\be^{2})u_{i}u_{j})(x, u(x))$ by (\ref{graph metric}),
\begin{equation}\label{ku}
\begin{split}
(k_{M})_{ij} &=\nu\cdot\big\{(\partial_{i}+u_{i}\partial_{t})(U^{\mu}+T^{\mu})\cdot(\ga_{\mu j}+u_{j}\ga_{\mu t})\\
             &+(U^{\mu}+T^{\mu})(\Ga_{i\mu, j}+u_{i}\Ga_{t\mu, j}+u_{j}\Ga_{i\mu, t}+u_{i}u_{j}\Ga_{t\mu, t})\big\},
\end{split}
\end{equation}
by formula (\ref{Hu1 in local coordinates}). So by the proof of Proposition \ref{differentiability w.r.t u}, $((g_{M})_{ij}-e_{ij})\in H_{s+1, \de+\frac{1}{2}}(\Si)$ and $(k_{M})_{ij}\in H_{s, \de+\frac{3}{2}}(\Si)$.
\end{proof}

In order to define the ADM mass and linear momentum, we need to assume $-\frac{3}{2}<\de<-1$, then by the embedding lemma \ref{properties of weighted sobolev case2}, $(g_{M}-e)\in C^{s-1}_{\be}(\Si)$ and $k_{M}\in C^{s-2}_{\be+1}(\Si)$ for some $\frac{1}{2}<\be<1$, which satisfy the conditions (\ref{decay of g and k}). Similar conditions are also satisfied by $(g-e, k)$. We can defined the ADM mass $m$, $m_{M}$ for $(g, k)$ and $(g_{M}, k_{M})$ respectively.
\begin{lemma}\label{conservation of mass}
For $-\frac{3}{2}<\de<-1$, in Theorem \ref{main theorem first part}, $m=m_{M}$.
\end{lemma}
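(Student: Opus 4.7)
The plan is to compare the boundary integrals defining $m$ and $m_M$ directly using the explicit formula \eqref{graph metric} for the graph metric, and to show that the difference tends to zero as $r\to\infty$ on each Euclidean end. Writing out
\[ (g_M - g)_{ij}(x) = [g_{ij}(x,u(x)) - g_{ij}(x,0)] + \be_i u_j + u_i \be_j - (\al^2 - \be^2)\, u_i u_j, \]
where the spacetime coefficients $g, \al, \be$ are evaluated at $(x, u(x))$, the key structural fact is that $\al|_\Si \equiv 1$ and $\be|_\Si \equiv 0$ by \eqref{initial data1}--\eqref{initial data2}, so each summand either carries an explicit $u$ (via Taylor expansion in the $t$-direction for the first bracket) or an explicit $Du$.

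By Theorem \ref{boost theorem}, Lemma \ref{sobolev bound for metric coefficient}, and Lemma \ref{regularity of the maximal surface}, we have $\ga - \ti{\eta}\in H_{s+2,\de}(\Om_\theta)$ and $u \in H_{s+2,\de-\frac{1}{2}}(\Si)$. The Sobolev embeddings (Lemmas \ref{properties of weighted sobolev case2} and \ref{properties of weighted sobolev case3}) then give, on each end and for any small $\ep > 0$ and $|\al|\leq 2$, the pointwise bounds
\[ |D^\al u| \leq C\, r^{-(\de+1+|\al|-\ep)}, \qquad |D^\al(\ga - \ti{\eta})| \leq C\, r^{-(\de+2+|\al|-\ep)}. \]
Crucially, $u$ itself need not decay (the weight $\de+1$ is negative), but $Du = O(r^{-(\de+2-\ep)})$ does decay since $\de+2 > \tfrac12$.

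A term-by-term chain-rule expansion of $\partial_k(g_M - g)_{ij}$ then gives, for the first bracket, $|\partial_t\partial_x g|\cdot|u|$ and $|\partial_t g|\cdot|Du|$, both of size $r^{-(2\de+5)+2\ep}$; for the shift term $\be_i u_j$, the pieces $(\partial_x\be) u_j$ and $\be\cdot u_{jk}$ are again $O(r^{-(2\de+5)+2\ep})$, while $(\partial_t\be)u_k u_j$ is smaller at $O(r^{-(3\de+7)+3\ep})$; and for the quadratic term, the dominant piece $(\al^2-\be^2) Du \cdot D^2u$ is also $O(r^{-(2\de+5)+2\ep})$. Combined with $d\sigma \sim r^2$ on $S_r$, this gives
\[ \Bigl|\int_{S_r}\bigl[(g_M - g)_{ij,i} - (g_M - g)_{ii,j}\bigr]\nu^j\, d\sigma(r)\Bigr| \leq C\, r^{-(2\de+3)+2\ep}, \]
which tends to zero precisely because $\de > -\tfrac32$ (choose $\ep < \de + \tfrac32$). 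Passing to the limit in \eqref{ADM mass} for both $g$ and $g_M$ then yields $m = m_M$. The only delicate point is the bookkeeping of weights: since $u$ may grow slightly, the required $o(r^{-2})$ pointwise decay of $\partial(g_M - g)$ has to be extracted from pairing a spacetime-coefficient derivative against $Du$, and the hypothesis $\de > -\tfrac32$ is tight for this estimate, explaining why it appears in the statement.
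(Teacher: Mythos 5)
Your proof follows essentially the same route as the paper. Both start from the decomposition
\[ (g_M - g)_{ij} = \bigl[(g_M)_{ij}(x) - g_{ij}(x,u(x))\bigr] + \bigl[g_{ij}(x,u(x)) - g_{ij}(x,0)\bigr], \]
with the first bracket given explicitly by \eqref{graph metric} as $\be_i u_j + \be_j u_i - (\al^2-\be^2)u_i u_j$ and the second handled by the Newton--Leibniz/Taylor expansion in $t$, both exploiting $\be|_\Si = 0$ to ensure an extra factor of $u$ or $Du$. The only stylistic difference is that the paper stays in the weighted Sobolev framework, obtaining $(g_M - g)_{ij} \in H_{s+1,\de+1}(\Si) \subset C^{s-1}_{\beta}(\Si)$ for some $\beta > 1$ (so $\partial(g_M-g)=o(r^{-2})$ and the boundary integral vanishes), whereas you pass to pointwise decay via the embeddings immediately and count exponents term-by-term. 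Your weight bookkeeping is consistent, and you correctly identify $\de > -\tfrac32$ as the threshold. One small thing worth adding for completeness: the ADM boundary integral for $(g_M,k_M)$ is actually well-defined, which follows from Lemma \ref{regularity of the maximal surface} and the embeddings just as for $(g,k)$; you use this implicitly when you pass to the limit.
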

\begin{proof}
We will use the multiplication lemma \ref{properties of weighted sobolev case2} frequently when we multiply two Soblev functions. $(g_{M})_{ij}(x)-g_{ij}(x, u(x))=(\be_{i}u_{j}+\be_{j}u_{i}-(\al^{2}-\be^{2})u_{i}u_{j})(x, u(x))$ by (\ref{graph metric}). Now $\be(x, u(x))\in H_{s+1, \de+\frac{1}{2}}(\Si)$ and $\partial u\in H_{s+1, \de+\frac{1}{2}}(\Si)$ imply $(g_{M})_{ij}(x)-g_{ij}(x, u(x))\in H_{s+1, \de+1}(\Si)$. 
$$g_{ij}(x, u(x))-g_{ij}(x)=\{\int_{s=0}^{1}\partial_{t} g_{ij}(x, su(x))ds\}\cdot u(x),$$
which shows $\{g_{ij}(x, u(x))-g_{ij}(x)\}\in H_{s+1, \de+1}(\Si)$, since $\partial_{t}g_{ij}(x, su(x))\in H_{s+1, \de+\frac{3}{2}}(\Si)$ and $u\in H_{s+2, \de-\frac{1}{2}}(\Si)$. Hence $\{(g_{M})_{ij}(x)-g_{ij}(x)\}\in H_{s+1, \de+1}(\Si)\subset C^{s-1}_{\be}(\Si)$, for some $1<\be<\de+\frac{5}{2}$ by the embedding lemma \ref{properties of weighted sobolev case2}. By checking the definition (\ref{ADM mass}), we know that a error term of decay rate $o(r^{-1})$ will not change the mass, so $m=m_{M}$.
\end{proof}

Now we will study the preservation of symmetry by this constructions. We need a lemma about symmetry preserving by the reduced EVE (\ref{reduced EVE}).
\begin{lemma}\label{symmetry preserving1}
Given a vacuum data $(g, k)\in\VC_{s+2, \de+\frac{1}{2}}(\Si)$, and $(\Om_{\theta}, \ga)$ the boost evolution of $(g, k)$ given by Theorem \ref{boost theorem}. Suppose that both $(g, k)$ and $e$ are symmetric under a Killing vector field $\xi$ on $\Si$, i.e. $(g, k)$ satisfy (\ref{symmetry}), and $\mL_{\xi}e=0$, where $e$ is the canonical metric on $\Si$. Then the parallel translation $\ti{\xi}$ of $\xi$ into $\Om_{\theta}$ is a Killing vector field of $\ga$.
\end{lemma}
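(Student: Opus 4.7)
The plan is to prove this by a symmetry-uniqueness argument, using the fact that the reduced Einstein vacuum equation (\ref{reduced EVE}) depends only on $\ga$ and the reference metric $\ti{e}$, and that the initial Cauchy data (\ref{initial data1})(\ref{initial data2}) depends only on $(g,k)$ and $\ti{\Ga}$. Since all of these are equivariant under the extended Killing field $\ti{\xi}$, pushing forward the solution $\ga$ by the flow of $\ti{\xi}$ yields another solution with the same Cauchy data, and uniqueness closes the loop.

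First I would verify that $\ti{\xi}$ is a Killing field of $\ti{e}=dt^{2}+e$. By construction, $\ti{\xi}$ is the pushforward of $\xi$ under the trivial product structure, so it has no $\partial_{t}$ component and is $t$-independent. Therefore $\mL_{\ti{\xi}}(dt^{2})=0$, while $\mL_{\ti{\xi}}e=\mL_{\xi}e=0$ by hypothesis. In particular, the flow $\phi_{s}$ of $\ti{\xi}$ preserves $\Si=\{t=0\}$, its restriction to $\Si$ is the flow of $\xi$, and $\phi_{s}$ is an isometry of $(\Om_{\theta},\ti{e})$. Next, let $\ga_{s}=\phi_{s}^{\ast}\ga$. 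Because $\phi_{s}$ preserves $\ti{e}$ and its Levi-Civita connection $\ti{D}$, and because the reduced EVE (\ref{reduced EVE}) is a tensor equation formed from $\ga$, $D\ga$, $D^{2}\ga$ together with the curvature $\ti{R}$ of $\ti{e}$, the pulled-back metric $\ga_{s}$ also satisfies (\ref{reduced EVE}) on $\Om_{\theta}$.

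Now I would check that $\ga_{s}$ has the same Cauchy data on $\Si$ as $\ga$. The data $\phi$ given by (\ref{initial data1}) is an algebraic expression in $g^{ij}$; since $\mL_{\xi}g=0$, it is $\xi$-invariant, so $\phi_{s}^{\ast}\phi=\phi$. For $\psi$ in (\ref{initial data2}), each component is a tensorial combination of $g$, $k$, the Christoffel symbols $\Ga^{i}_{g}$ of $g$, and the Christoffel symbols $\ti{\Ga}^{i}_{kj}$ of $\ti{e}$. The hypotheses $\mL_{\xi}g=0$ and $\mL_{\xi}k=0$ imply that $\Ga^{i}_{g}$ and $g^{ik}g^{jl}k_{kl}$ and $tr_{g}k$ are all $\xi$-invariant, while $\mL_{\xi}e=0$ implies $\ti{\Ga}^{i}_{kj}$ is $\xi$-invariant. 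Hence $\phi_{s}^{\ast}\psi=\psi$, and the Cauchy data of $\ga_{s}$ on $\Si$ agrees with that of $\ga$.

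At this stage I would invoke the uniqueness part of Theorem \ref{boost theorem}: the reduced EVE with given Cauchy data $(\phi,\psi)$ has a unique solution in $\ti{\eta}+H_{s,\de}(\Om_{\theta})$. Since $\ga$ and $\ga_{s}$ are both such solutions (note $\ga_{s}-\ti{\eta}=\phi_{s}^{\ast}(\ga-\ti{\eta})\in H_{s,\de}(\Om_{\theta})$ because $\phi_{s}$ is an $\ti{e}$-isometry and so preserves the weighted Sobolev norms), we conclude $\ga_{s}=\ga$ for all $s$, i.e. $\mL_{\ti{\xi}}\ga=0$. The main subtlety to check is that $\ga_{s}$ actually lies in the same uniqueness class used in Theorem \ref{boost theorem}, which is immediate since the flow $\phi_{s}$ of a compactly-supported-in-$t$ Killing field of $\ti{e}$ preserves all the boost-region decompositions $(V_{int})_{\theta_{0},\la_{0}}\cup_{i}V_{\theta_{i},\la_{i}}$ used to patch the local solutions, the harmonic gauge condition $f^{\mu}=0$ (being tensorial in $\ga$ and $\ti{e}$) is likewise preserved, and the weighted Sobolev norms are invariant under isometries of $\ti{e}$. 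This is the step that requires the most care, but it reduces to the observation that each ingredient of the construction in Theorem \ref{boost theorem} is equivariant under $\ti{\xi}$.
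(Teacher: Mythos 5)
Your proposal is correct and takes essentially the same route as the paper: pull back $\ga$ by the flow $\ti{\phi}_s$ of $\ti{\xi}$, check that the Cauchy data and the reduced EVE are preserved, and close with the uniqueness clause of Theorem \ref{boost theorem}. The one place where the paper phrases things a bit differently is the step that $\ga_s=\ti{\phi}_s^{\ast}\ga$ still solves (\ref{reduced EVE}): rather than asserting directly that the reduced equation is a covariant tensor equation in $(\ga,D\ga,D^2\ga,\ti{R})$, the paper uses the equivalent characterization that a Ricci-flat metric satisfies (\ref{reduced EVE}) if and only if $id:(\Om_\theta,\ga_s)\to(\Om_\theta,\ti{e})$ is a wave map, and then pulls back $\Box_{(\ga,\ti{e})}id=0$ by $\ti{\phi}_s$ using $\ti{\phi}_s^\ast\ti{e}=\ti{e}$. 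That packaging sidesteps any worry about whether the coordinate form of $B^{\mu\nu}$ from \cite{ChOM} is manifestly $\ti{e}$-covariant; your observation is equivalent, but if you wanted to be airtight you should either cite the wave-map equivalence as the paper does or explicitly note that $D$ in (\ref{reduced EVE}) is the $\ti{e}$-connection so the equation is tensorial under $\ti{e}$-isometries. Your checks of the Cauchy-data invariance and of $\ga_s-\ti\eta\in H_{s,\de}(\Om_\theta)$ are correct and in fact slightly more explicit than the paper's.
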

\begin{proof}
Now let $\phi_{s}: \Si\rightarrow\Si$ be the one parameter group of diffeomorphisms corresponding to $\xi$. Then $(\phi_{s})^{*}g=g$, $(\phi_{s})^{*}k=k$ and $(\phi_{s})^{*}e=e$. Now let us extend $\phi_{s}$ to a diffeomorphism $\ti{\phi}_{s}:\Om_{\theta}\rightarrow\Om_{\theta}$ by
\begin{equation}\label{ti-phi}
\ti{\phi}_{s}(x, t)=(\phi_{s}(x), t).
\end{equation}
Then $(\ti{\phi}_{s})^{*}\ti{e}=\ti{e}$ where $\ti{e}$ is defined by (\ref{e tilde}). By the initial conditions (\ref{initial data1})(\ref{initial data2}) for $\ga$, we know that $\ga_{s}=(\ti{\phi}_{s})^{*}\ga$ has the same initial conditions as $\ga$ on $\Si$. If we can show that $\ga_{s}$ also solves the reduced (EVE) (\ref{reduced EVE}), the uniqueness in Theorem \ref{boost theorem} implies that $\ga_{s}=\ga$. Since $\ga_{s}$ is Ricci flat, we only need to show that $(\Om_{\theta}, \ga_{s})$ is also in a harmonic gauge, or equivalently, $id: (\Om_{\theta}, \ga_{s})\rightarrow(\Om_{\theta}, \ti{e})$ is a wave map. By pulling back the wave map equation $\Box_{(\ga, \ti{e})}id=0$ by $\ti{\phi}$, we get $\Box_{((\ti{\phi}_{s})^{*}\ga, (\ti{\phi}_{s})^{*}\ti{e})}id=0$, which reduces to $\Box_{(\ga_{s}, \ti{e})}id=0$. This means that $\ga_{s}$ is also in a harmonic gauge, hence $\ga_{s}=\ga$. Now the vector field corresponding to $\ti{\phi}_{s}$ is clearly the parallel translation of $\xi$ into $\Om_{\theta}$.
\end{proof}

Now we can prove the preservation of symmetry for the maximal surface.
\begin{theorem}\label{main theorem second part}
Given $s\geq 4$, $-2<\de<-1$. Suppose $(\Si, e)$ is a 3-manifold, which is Euclidean at infinity and axisymmetric in the sense of Definition \ref{definition of axisymmetric}. If $(g, k)\in\VC^{a}_{s+2, \de+\frac{1}{2}}(\Si)$ is axisymmetric, and $\|tr_{g}k\|_{H_{s-2, \de+\frac{3}{2}}(\Si)}\leq\ep$ with $\ep$ given by Theorem \ref{main theorem first part}, then the solution $u$ of the maximal surface equation (\ref{maximal surface equation}) given in Theorem \ref{main theorem first part} can be chosen to be axisymmetric, i.e. $\partial_{\varphi}u=0$. Hence $(\Si, g_{u}, k_{u})$ is axisymmetric, and the angular momentum of $(g_{u}, k_{u})$ equals that of $(g, k)$.
\end{theorem}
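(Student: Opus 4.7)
The strategy is to invoke the uniqueness clause in Theorem~\ref{main theorem first part}: if $u$ is the solution produced there, I will construct from $u$ a family of candidate solutions $u_{s}:=u\circ\phi_{s}$ (where $\phi_{s}$ is the flow of $\partial_{\varphi}$ on $\Si$) that lie in the same small ball of $\ker(L_{0},\de-\tfrac{1}{2})^{\perp}$, and conclude $u_{s}\equiv u$. Differentiating in $s$ at $s=0$ gives $\partial_{\varphi}u=0$.

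The first step is to show each $u_{s}$ solves $\mH(g,k,u_{s})=0$. Extend $\phi_{s}$ to $\ti\phi_{s}(x,t)=(\phi_{s}(x),t)$ on $\Om_{\theta}$ as in (\ref{ti-phi}). By Lemma~\ref{symmetry preserving1}, $\ti\phi_{s}$ is an isometry of $(\Om_{\theta},\ga)$, since $(g,k)$ is axisymmetric and $e$ is axisymmetric by Definition~\ref{definition of axisymmetric}. Because $Graph_{u_{s}}=\ti\phi_{s}^{-1}(Graph_{u})$ as subsets of $\Om_{\theta}$, its mean curvature in $(\Om_{\theta},\ga)$ is the pullback of $H_{u}=0$ under an isometry, so $H_{u_{s}}=0$, i.e. $\mH(g,k,u_{s})=0$.

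The second step is to show $u_{s}\in\B_{\rho'}\cap\ker(L_{0},\de-\tfrac{1}{2})^{\perp}$. Since $\phi_{s}$ is an isometry of $e$ on $\Si$, it preserves the weight $\si_{e}$ and hence the norm $\|\cdot\|_{H_{s,\de-\frac{1}{2}}(\Si)}$, so $\|u_{s}\|_{H_{s,\de-\frac{1}{2}}(\Si)}=\|u\|_{H_{s,\de-\frac{1}{2}}(\Si)}\leq\rho'$. For the orthogonality, axisymmetry of $(g,k)$ gives $\phi_{s}^{*}(L_{0}v)=L_{0}(\phi_{s}^{*}v)$, so $\ker(L_{0},\de-\tfrac{1}{2})$ is $\phi_{s}^{*}$-invariant; and $\phi_{s}$ being an $e$-isometry preserves the $L^{2}_{\de-1}$ inner product, so its orthogonal complement is invariant as well. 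Therefore $u_{s}$ lies in the same uniqueness class as $u$, and the uniqueness in Theorem~\ref{main theorem first part} forces $u_{s}=u$ for every $s$. Thus $u$ is axisymmetric, and since $\ti\phi_{s}$ is a spacetime isometry that preserves $Graph_{u}$ setwise, the induced $(g_{u},k_{u})$ satisfy $\mL_{\partial_{\varphi}}g_{u}=0$ and $\mL_{\partial_{\varphi}}k_{u}=0$.

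The third step is the conservation of angular momentum. With $\xi=\partial_{\varphi}$, $\pi=k-(tr_{g}k)g$, and $\pi_{u}=k_{u}$ (since $(g_{u},k_{u})$ is maximal), I will compare the integrands of (\ref{angular momentum}) on a large coordinate sphere $S_{r}$ in the asymptotic end, in the spirit of Lemma~\ref{conservation of mass}. Using the formulas (\ref{graph metric}) and (\ref{ku}) for $g_{u}$ and $k_{u}$ and the Newton--Leibniz decomposition
\[
(k_{u})_{ij}(x)-k_{ij}(x)=\big\{(k_{u})_{ij}(x)-k_{ij}(x,u(x))\big\}+\Big\{\int_{0}^{1}\partial_{t}k_{ij}(x,su(x))\,ds\Big\}u(x),
\]
together with $u\in H_{s+2,\de-\frac{1}{2}}(\Si)$, $(g-e,k)\in H_{s+2,\de+\frac{1}{2}}\times H_{s+1,\de+\frac{3}{2}}$, and the multiplication/embedding Lemma~\ref{properties of weighted sobolev case2}, the difference $\pi_{u}-\pi$ decays strictly faster than the flux rate that enters (\ref{angular momentum}) (the extra decay from the $u$ factor gives a $C^{0}_{\be}$ bound with $\be$ strictly larger than the critical exponent, exactly as in Lemma~\ref{conservation of mass}), so the surface integrals agree in the limit $r\to\infty$, giving $J=J_{u}$. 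The main obstacle is verifying that the asymptotic comparison of the two tensors $\pi$ and $\pi_{u}$ loses no leading term when paired with the unbounded Killing vector $\xi=\partial_{\varphi}$; this is where it is crucial that $-\frac{3}{2}<\de<-1$, so that the residual decay beats the $|\xi|\cdot r^{2}$ growth of area times Killing length.
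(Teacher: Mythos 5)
Your argument that $u$ can be chosen axisymmetric is essentially the paper's own: you pull back by $\ti\phi_s$, use Lemma~\ref{symmetry preserving1} to make $\ti\phi_s$ an isometry of $(\Om_\theta,\ga)$, note that $\phi_s^*u$ is again a solution of $\mH(g,k,\cdot)=0$, check that $\B_{\rho'}\cap\ker(L_0,\de-\tfrac12)^\perp$ is $\phi_s^*$-invariant, and invoke uniqueness. That part is correct.

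Your angular-momentum step is where you diverge from the paper and where there is a real gap. You try to compare the two surface integrals of $\pi_{ij}\xi^i\nu^j$ on large coordinate spheres and argue the difference vanishes because $\pi_u-\pi$ decays. Track the exponents: $k\in H_{s+1,\de+\frac32}(\Si)$ gives pointwise decay $|k|\lesssim r^{-\de''}$ only for $\de''<\de+3<2$, and the Newton--Leibniz gain from the $u$ factor (mirroring Lemma~\ref{conservation of mass}) puts $k_u-k$ in $H_{s,\de+2}(\Si)$, hence $|k_u-k|\lesssim r^{-\be}$ with $\be<\de+\tfrac72<\tfrac52$. Against $|\xi|\sim r$ and area $\sim r^2$, the integrand difference on $S_r$ scales like $r^{3-\be}\gg1$, so the estimate does not close; the restriction $-\tfrac32<\de<-1$ makes $\be$ too small, not large enough. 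More fundamentally, the angular momentum integral is finite and $r$-independent not because the integrand decays (by the count above it does not) but because $\pi_{ij}\xi^i$ is divergence-free when $\xi$ is Killing and $\pi$ satisfies the constraint; so a raw pointwise comparison of integrands is the wrong tool. The paper instead rewrites $J(S)$ as the Komar integral $\frac{1}{16\pi}\int_S *d\xi$ in the vacuum spacetime, uses that $*d\xi$ is a closed $2$-form there (Ricci flatness plus the Killing identity), and then compares homologous surfaces $S\subset\Si$ and $S'\subset Graph_u$ via Stokes' theorem, giving $J=J_u$ with no decay gymnastics. You should replace your third step by that argument.
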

\begin{proof}
By Theorem \ref{main theorem first part}, $\mH(g, k, u)=0$ has a unique solution $u\in\B_{\rho^{\pr}}\cap ker(L_{0}, \de-\frac{1}{2})^{\perp}$. Let $\phi_{s}$ be the diffeomorphism corresponding to the Killing vector field $\xi=\frac{\partial}{\partial\varphi}$ in Definition \ref{definition of axisymmetric}, and $\ti{\phi}_{s}$ the extension given in (\ref{ti-phi}). When $(g, k)$ is also axisymmetric, the boost solution $(\Om_{\theta}, \ga)$ is invariant under $\ti{\phi}_{s}$ by Lemma \ref{symmetry preserving1}. Now pulling back $\mH(g, k, u)=0$ by $\ti{\phi}_{s}$, we can see that $\phi_{s}^{*}u$ is a solution of $\mH(\phi_{s}^{*}g, \phi_{s}^{*}k, \phi_{s}^{*}u)=0$, hence $\mH(g, k, \phi_{s}^{*}u)=0$. Since $(\Si, e)$ and $(g, k)$ are all invariant under $\phi_{s}$, $ker(L_{0}, \de-\frac{1}{2})$ and hence $ker(L_{0}, \de-\frac{1}{2})^{\perp}$ are also invariant under $\phi_{s}$, which means that $(\phi_{s})^{*}u\in \B_{\rho^{\pr}}\cap ker(L_{0}, \de-\frac{1}{2})^{\perp}$, then uniqueness in Theorem \ref{main theorem first part} implies that $(\phi_{s})^{*}u=u$. So $u$ is axisymmetric, hence is $(g_{u}, k_{u})$ since $\ga$ is also axisymmetric.

For the angular momentum, we have another formular, which is called Komar integral(see section 11.2 in \cite{W} for definition and equivalence with (\ref{angular momentum})),
\begin{equation}
J(S)=\frac{1}{16\pi}\int_{S}*d\xi,
\end{equation}
where $*$ is the Hodge star operator w.r.t $\ga$, and $\xi$ the killing vector field. Since $*d\xi$ is a close form, we know that $J(S)$ is invariant for any two spacelike close surface $S$ and $S^{\pr}$ which are homologous equivalent. So $(\Si, g, k)$ and $(Graph_{u}, g_{u}, k_{u})$ have the same angular momentum.
\end{proof}


\section{Appendix}\label{appendix}

\subsection{Geometry of hypersurface}

Here we show the detailed calculation of the mean curvature of a level surface. Part of the results here already appeared in \cite{Ba1}. First let us calculate the future-directed timelike unit normal vector of $\Sigma_{t}$ defined by $T=-\frac{\nabla t}{|\nabla t|}$, which is given by:
\begin{equation}
T=-\al\nabla t=-\al(\gamma^{tt}\partial_{t}+\gamma^{ti}\partial_{i})=\al^{-1}(\partial_{t}-\be).
\end{equation}
$Graph_{u}$ can be viewed as level surface of $(u-t)=0$, so the unit normal of $Graph_{u}$ is $N=\frac{\nabla(u-t)}{|\nabla(u-t)|}$. Now
\begin{equation}
\begin{split}
\nabla u &=\gamma^{tj}u_{j}\partial_{t}+\gamma^{ij}u_{j}\partial_{i}=\frac{1}{\al^{2}}\lan\be, Du\ran\partial_{t}+Du-\frac{1}{\al^{2}}\lan\be, Du\ran\be^{i}\partial_{i}\\
         &=\frac{1}{\al}\lan\be, Du\ran T+Du.
\end{split}
\end{equation}
So $N$ is calculated as
\begin{equation}
\begin{split}
\nabla(u-t) &=\frac{1}{\al^{2}}\lan\be, Du\ran\partial_{t}+Du-\frac{1}{\al^{2}}\lan\be, Du\ran\be^{i}\partial_{i}+\frac{1}{\al^{2}}\partial_{t}-\frac{1}{\al^{2}}\be^{i}\partial_{i}\\
            &=Du+\frac{1+\lan\be, Du\ran}{\al^{2}}(\partial_{t}-\be)\\
            &=\al^{-1}(1+\lan\be, Du\ran)\big(\frac{\al Du}{1+\lan\be, Du\ran}+T\big).
\end{split}
\end{equation}
Writing $U=\frac{\al Du}{1+\lan\be, Du\ran}$, then $N=\frac{U+T}{|U+T|}$, where $|U+T|=(1-|U|^{2})^{1/2}$, so we get equation (\ref{mean curvature for level surface}).


Denoting $M=Graph_{u}$, let us calculate the mean curvature. For completeness we give the inverse metric matrix $(g_{M})^{-1}$ of $g_{M}$ in (\ref{graph metric}). First we need to calculate the co-frame of (\ref{local frame}). Denoting them by $\al^{i}=a^{i}_{k}dx^{k}+a^{i}_{t}dt:\ i,k=1, 2, 3.$. Then they should satisfy:
\begin{equation}\label{coframe equation}
\al^{i}(\al_{k})=\delta^{i}_{k},\quad \al^{i}(N)=0.
\end{equation}
The last equation gives
\begin{equation}
\begin{split}
(a^{i}_{k}dx^{k}+a^{i}_{t}dt)[\al(U+T)] &=(a^{i}_{k}dx^{k}+a^{i}_{t}dt)(\frac{\al^{2}Du}{1+\lan\be, Du\ran}+(\partial_{t}-\be))\\
                                                                &=a^{i}_{k}(\frac{\al^{2}u^{k}}{1+\lan\be, Du\ran}-\be^{k})+a^{i}_{t}=0.
\end{split}
\end{equation}
So
\begin{equation}
a^{i}_{t}=a^{i}_{k}(\be-\frac{\al^{2}Du}{1+\lan\be, Du\ran})^{k}=a^{i}_{k}(\be-\al U)^{k}.
\end{equation}
Putting into the first on in (\ref{coframe equation}), we have
\begin{equation}
(a^{i}_{k}dx^{k}+a^{i}_{l}(\be^{l}-\al U^{l})dt)(\partial_{k}+u_{k}\partial_{t})=a^{i}_{k}+a^{i}_{l}(\be^{l}-\al U^{l})u_{k}=\delta^{i}_{k}.
\end{equation}
Denoting matrix $A=(a^{i}_{k})$, then the above equations change to the matrix equation
\begin{equation}
A\cdot[id+(\be-\al U)(Du)^{t}]=id.
\end{equation}
Solving the last matrix equation\footnote{The inverse of $Id+\textbf{u}\textbf{v}^{t}$ is given by $Id-\frac{\textbf{u}\textbf{v}^{t}}{1+\textbf{u}\cdot\textbf{v}}$}, we get
\begin{equation}
\begin{split}
a^{i}_{k} &=Id-\frac{(\be^{i}-\al U^{i})u_{k}}{1+\lan\be-\al U, Du\ran}=Id-\frac{(\be^{i}-\al U^{i})u_{k}}{1+\lan\be, Du\ran-(1+\lan\be, Du\ran)|U|^{2}}\\
       &=Id-\nu^{2}\frac{(\be^{i}-\al U^{i})u_{k}}{1+\lan\be, Du\ran}=Id-\nu^{2}(\be/\al-U)^{i}U_{k},
\end{split}
\end{equation}
where we have used $U=\frac{\al Du}{1+\lan\be, Du\ran}$, and $\nu^{-2}=1-|U|^{2}$. Then
\begin{equation}
\begin{split}
a^{i}_{t} &=a^{i}_{k}(\be-\al U)^{k}=(\delta^{i}_{k}-\nu^{2}\frac{(\be^{i}-\al U^{i})u_{k}}{1+\lan\be, Du\ran})(\be^{k}-\al U^{k})\\
          &=\be^{i}-\nu^{2}\frac{(\be^{i}-\al U^{i})\lan\be, Du\ran}{1+\lan\be, Du\ran}-\al U^{i}+\nu^{2}(\be^{i}-\al U^{i})|U|^{2}\\
          &=(1+\nu^{2}|U|^{2})(\be^{i}-\al U^{i})-\nu^{2}\frac{\lan\be, Du\ran}{1+\lan\be, Du\ran}(\be^{i}-\al U^{i})\\
          &=\nu^{2}(\be^{i}-\al U^{i})-\nu^{2}\frac{\lan\be, Du\ran}{1+\lan\be, Du\ran}(\be^{i}-\al U^{i})\\
          &=\frac{\nu^{2}}{1+\lan\be, Du\ran}(\be-\al U)^{i}.
\end{split}
\end{equation}
So the co-frame is given by
\begin{equation}\label{coframe}
\al^{i}=(\delta^{i}_{k}-\nu^{2}(\be/\al-U)^{i}U_{k})dx^{k}+\frac{\nu^{2}}{1+\lan\be, Du\ran}(\be-\al U)^{i}dt.
\end{equation}
Taking inner product of the co-frame with respect to $\gamma^{-1}$, we can calculate $g_{M}^{-1}$.
\begin{equation}\label{inverse graph metric calculation}
\begin{split}
&(g_{M})^{ij}\\
&=\big\lan(\delta^{i}_{k}-\nu^{2}(\be/\al-U)^{i}U_{k})dx^{k}+\frac{\nu^{2}(\be-\al U)^{i}}{1+\lan\be, Du\ran}dt, (\delta^{j}_{l}-\nu^{2}(\be/\al-U)^{j}U_{l})dx^{l}+\frac{\nu^{2}(\be-\al U)^{j}}{1+\lan\be, Du\ran}dt\big\ran_{\gamma}\\
&=(\delta^{i}_{k}-\nu^{2}(\be/\al-U)^{i}U_{k})(\delta^{j}_{l}-\nu^{2}(\be/\al-U)^{j}U_{l})(g^{kl}-\frac{1}{\al^{2}}\be^{k}\be^{l})\\
&+(\delta^{i}_{k}-\nu^{2}(\be/\al-U)^{i}U_{k})\frac{\nu^{2}(\be-\al U)^{j}}{1+\lan\be, Du\ran}\frac{\be^{k}}{\al^{2}}+(\delta^{j}_{k}-\nu^{2}(\be/\al-U)^{j}U_{k})\frac{\nu^{2}(\be-\al U)^{i}}{1+\lan\be, Du\ran}\frac{\be^{k}}{\al^{2}}\\
&-\frac{1}{\al^{2}}\frac{\nu^{4}(\be-\al U)^{i}(\be-\al U)^{j}}{(1+\lan\be, Du\ran)^{2}}\\
&=g^{ij}-\frac{1}{\al^{2}}\be^{i}\be^{j}-\nu^{2}(\be/\al-U)^{j}U^{i}-\nu^{2}(\be/\al-U)^{i}U^{j}+\frac{\nu^{2}(\be-\al U)^{j}\be^{i}\lan\be, Du\ran}{\al^{2}(1+\lan\be, Du\ran)}\\
&+\frac{\nu^{2}(\be-\al U)^{i}\be^{j}\lan\be, Du\ran}{\al^{2}(1+\lan\be, Du\ran)}+\nu^{4}(\be/\al-U)^{i}(\be/\al-U)^{j}|U|^{2}\\
&-\frac{\nu^{4}(\be-\al U)^{i}(\be-\al U)^{j}}{\al^{2}(1+\lan\be, Du\ran)^{2}}\lan\be, Du\ran^{2}+\frac{\nu^{2}}{\al^{2}}\frac{\be^{i}(\be-\al U)^{j}+\be^{j}(\be-\al U)^{i}}{1+\lan\be, Du\ran}\\
&-2\frac{\nu^{4}(\be-\al U)^{i}(\be-\al U)^{j}}{\al^{2}(1+\lan\be, Du\ran)^{2}}\lan\be, Du\ran-\frac{\nu^{4}(\be-\al U)^{i}(\be-\al U)^{j}}{\al^{2}(1+\lan\be, Du\ran)^{2}}\\
&=g^{ij}-\frac{1}{\al^{2}}\be^{i}\be^{j}-\nu^{2}(\be/\al-U)^{j}U^{i}-\nu^{2}(\be/\al-U)^{i}U^{j}+\nu^{4}(\be/\al-U)^{i}(\be/\al-U)^{j}|U|^{2}\\
&+\frac{\nu^{2}}{\al^{2}}[\be^{i}(\be-\al U)^{j}+\be^{j}(\be-\al U)^{i}]-\frac{\nu^{4}}{\al^{2}}(\be-\al U)^{i}(\be-\al U)^{j}\\
&=g^{ij}-\frac{1}{\al^{2}}\be^{i}\be^{j}+\frac{\nu^{2}}{\al^{2}}(\be-\al U)^{i}(\be-\al U)^{j}.
\end{split}
\end{equation}


\subsection{Linear boost estimates on an end}\label{Linear boost estimates}

Here we will give a detailed version of linear boost estimates on an Euclidean end using method in \cite{Ch1} and \cite{ChOM}. It was also mentioned in \cite{BaChrusOM}. We will mainly give the energy estimates needed to prove Theorem \ref{estimates and existence for linear system}. For convenience, we sometime abbreviate $V_{\theta, \la}=V$ in this section. Given a regularly hyperbolic metric $\ga^{\mu\nu}$ and a $\R^{N}$-valued function $u$ in $V_{\theta, \la}$, we can associate it with the \emph{energy-momentum tensor} $T^{\mu\nu}$\footnote{See also equation (4.6) in \cite{Ch1}.}: 
\begin{equation}\label{energy-momentum tensor}
T^{\mu\nu}=G^{\mu\nu\rho\si}D_{\rho}u\cdot D_{\si}u\footnote{Here the inner product of $D_{\rho}u\cdot D_{\si}u=\sum_{k=1}^{N}D_{\rho}u^{k}D_{\si}u^{k}$.},
\end{equation}
where
$$G^{\mu\nu\rho\si}=\ga^{\mu\rho}\ga^{\nu\si}+\ga^{\mu\si}\ga^{\nu\rho}-\ga^{\mu\nu}\ga^{\rho\si}.$$
Given the unit normal $\tilde{n}$ of $\{E_{\tau}\}$ defined in (\ref{future normal}), the \emph{momentum vector field} relative to $\ti{n}$ is
\begin{equation}
P^{\mu}=T^{\mu\nu}\tilde{n}_{\nu}.
\end{equation}
Furthermore, the divergence of $P^{\mu}$ is
\begin{equation}\label{divergence of P}
D_{\mu}P^{\mu}=2(\ga^{\rho\si}\tilde{n}_{\rho}D_{\si}u)\cdot\ga^{\mu\nu}D^{2}_{\mu\nu}u+Q,
\end{equation}
where
$$Q=\La^{\mu\nu}D_{\mu}u\cdot D_{\nu}u,\ \textrm{with}\ \La^{\mu\nu}=D_{\rho}(G^{\mu\nu\rho\si}\tilde{n}_{\si}).$$

Let $N^{-2}=-\lan D\tau, D\tau\ran_{\ga}$ be the lapse function for $\tau$ w.r.t. $\ga$ and $n=ND\tau$ the unit co-normal of $\{E_{\tau}\}$ w.r.t. $\ga$. We introduce an orthonormal frame $\{e_{0}, e_{1}, \cdots, e_{n-1}\}$ w.r.t. $\ga$, such that $e_{0}$ is along the direction of $\tilde{n}^{\mu}=\ga^{\mu\nu}\tilde{n}_{\mu}$, i.e. $e_{0}=\frac{N}{\tilde{N}}\tilde{n}^{\mu}$, where $(\frac{N}{\tilde{N}})^{-2}=|\tilde{n}|_{\ga}^{2}$, and $e_{i}$ perpendicular to $\tilde{n}^{\mu}$. According to Section 2 in \cite{Ch1}, we know that 
$|\tilde{n}|_{\ga}^{2}=\ga^{\mu\nu}\tilde{n}_{\mu}\tilde{n}_{\nu}=(\frac{N}{\tilde{N}})^{-2}$ is bounded from both above and below by some constants depending only on $\theta$ and $h$.

\begin{lemma}\label{P past time-like}
When $\ga$ is regularly hyperbolic, $P^{\mu}$ is past time-like w.r.t $\ga$.
\end{lemma}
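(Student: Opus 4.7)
The plan is to choose a $\ga$-orthonormal frame adapted to $\tilde n$ and then reduce everything to a direct computation in components, combined with a Cauchy--Schwarz inequality in the $\R^N$-factor.

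First I would construct a local $\ga$-orthonormal frame $\{e_0,e_1,\dots,e_{n-1}\}$ with $e_0$ pointing along $\tilde n^{\sharp}:=\ga^{\mu\nu}\tilde n_\nu\partial_\mu$. Condition (1) of regular hyperbolicity (Definition \ref{regular hyperbolicity}) yields $\ga(\tilde n^{\sharp},\tilde n^{\sharp})=\ga^{\mu\nu}\tilde n_\mu\tilde n_\nu\le -a<0$, so $\tilde n^{\sharp}$ is $\ga$-timelike and $e_0=\tilde n^{\sharp}/|\tilde n^{\sharp}|_\ga$ is well-defined. Condition (2) ensures that $\ga$ restricted to the subspace of covectors annihilating $\tilde n^{\sharp}$ is positive definite, so one completes $e_0$ to an orthonormal frame by Gram--Schmidt. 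In this frame $\ga^{\mu\nu}=\mathrm{diag}(-1,1,\dots,1)$ and $\tilde n_\mu=(-\la,0,\dots,0)$, where $\la=(-\ga^{\mu\nu}\tilde n_\mu\tilde n_\nu)^{1/2}>0$.

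Next, I would rewrite
$$T^{\mu\nu}=2\,(\partial^{\mu}u)\!\cdot\!(\partial^{\nu}u)-\ga^{\mu\nu}|Du|_\ga^{2},$$
where $\partial^{\mu}u=\ga^{\mu\rho}D_\rho u$, the dot is the $\R^{N}$ inner product, and $|Du|_\ga^{2}=\ga^{\rho\si}D_\rho u\!\cdot\!D_\si u$. Writing $D_\mu u^{I}=(u_0^{I},u_1^{I},\dots,u_{n-1}^{I})$ in the frame, set $B:=\sum_{I}(u_0^{I})^{2}$ and $C:=\sum_{I,i}(u_i^{I})^{2}$. A direct component computation gives $T^{00}=B+C$ and $T^{i0}=-2\sum_{I}u_0^{I}u_i^{I}$, so $P^{0}=-\la(B+C)$ and $P^{i}=2\la\sum_{I}u_0^{I}u_i^{I}$.

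To show that $P$ is past-directed I would compute
$$P^{\mu}\tilde n_\mu=T^{\mu\nu}\tilde n_\mu\tilde n_\nu=2|\tilde n^{\rho}D_\rho u|^{2}-\ga^{\mu\nu}\tilde n_\mu\tilde n_\nu\,|Du|_\ga^{2}=\la^{2}(B+C)\ge 0,$$
which is the standard past-directed condition for a vector $P$ against the future-timelike $\tilde n^{\sharp}$. For the causal (time-like, possibly null) character I would apply Cauchy--Schwarz in the $\R^{N}$ factor,
$$\sum_{i}\Big(\sum_{I}u_0^{I}u_i^{I}\Big)^{2}\le\sum_{i}\Big(\sum_{I}(u_0^{I})^{2}\Big)\Big(\sum_{I}(u_i^{I})^{2}\Big)=BC,$$
and conclude
$$\langle P,P\rangle_\ga=-(P^{0})^{2}+\sum_{i}(P^{i})^{2}\le -\la^{2}(B+C)^{2}+4\la^{2}BC=-\la^{2}(B-C)^{2}\le 0.$$
Hence $P$ is past-directed with $\langle P,P\rangle_\ga\le 0$, i.e. past time-like (in the convention that includes the null direction; strict time-likeness holds whenever $Du$ is non-zero and not $\ga$-null, in particular $B\ne C$). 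The main conceptual obstacle, compared to the scalar case, is exactly the Cauchy--Schwarz step, which is the only point where the vector-valued structure of $u$ interacts non-trivially with the signature; the frame construction is routine but crucially uses \emph{both} parts of the regular hyperbolicity hypothesis.
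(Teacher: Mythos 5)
Your proposal is correct and follows essentially the same route as the paper: an orthonormal frame adapted to $\tilde n^{\sharp}$ together with a Cauchy--Schwarz inequality in the $\R^{N}$ factor, yielding $\lan P,P\ran_{\ga}\leq 0$ and the sign condition that forces $P$ to be past-directed. The only cosmetic difference is that the paper verifies past-directedness by pairing $P$ with an arbitrary future causal vector $l$ (and computes $\ga(P,P)\leq|Du|_{\ga}^{4}|\tilde n|_{\ga}^{2}\leq0$ covariantly), whereas you pair with $\tilde n^{\sharp}$ itself and do the causality computation in components; both give the same conclusion.
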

\begin{proof}
$T^{\mu\nu}=2D^{\mu}u\cdot D^{\nu}u-|Du|^{2}_{\ga}\ga^{\mu\nu}$, so $P^{\mu}=T^{\mu\nu}\tilde{n}_{\nu}=2D^{\mu}u\cdot D^{\nu}u\tilde{n}_{\nu}-|Du|^{2}_{\ga}\tilde{n}^{\mu}$, and
$$\ga_{\mu\nu}P^{\mu}P^{\nu}=4\ga_{\mu\nu}(D^{\mu}u\cdot D^{\rho}u\tilde{n}_{\rho})(D^{\nu}u\cdot D^{\si}u\tilde{n}_{\si})-4|Du|^{2}_{\ga}(D^{\mu}u\tilde{n}_{\mu}\cdot D^{\nu}u\tilde{n}_{\nu})+|Du|_{\ga}^{4}|\tilde{n}|_{\ga}^{2}$$
$$\leq|Du|_{\ga}^{4}|\tilde{n}|_{\ga}^{2}\leq 0.$$
The first $``\leq"$ comes from Cauchy-Schwartz inequality, and the second comes from the fact that $\tilde{n}$ is time-like w.r.t. $\ga$.

Take $l^{\mu}$ as a future like vector field, then in the orthonormal frame $\{e_{0}, e_{1}, \cdots, e_{n-1}\}$ as above, $l^{0}>\sqrt{\sum_{i=1}^{n-1}(l^{i})^{2}}$, and
$$\ga_{\mu\nu}P^{\mu}l^{\mu}=2[(D_{0}u)l^{0}+(D_{i}u)l^{i}](D_{0}u)(\tilde{N}/N)-[-(D_{0}u)^{2}+\sum(D_{i}u)^{2}](-l_{0})(\tilde{N}/N)$$
$$=[(D_{0}u)^{2}+\sum(D_{i}u)^{2}]l^{0}(\tilde{N}/N)+2D_{i}uD_{0}u l^{i}(\tilde{N}/N)$$
$$\geq (\tilde{N}/N)[(D_{0}u)^{2}+\sum(D_{i}u)^{2}](l^{0}-\sqrt{\sum(l^{i})^{2}})\geq 0.$$
The first $``\geq"$ comes from the Cauchy-Schwartz inequality. So it shows that $P$ is past time-like w.r.t. $\ga$.
\end{proof}

Now we introduce the restriction norm and restriction lemma similar to (\ref{restriction norm2}) and Lemma \ref{restriction}. Given $u\in H_{s, \de}(V_{\theta, \la})$, the restriction norm to hypersurface $E_{\tau}$ is defined as:
\begin{equation}\label{restriction norm}
\|u\|_{H_{s, \de}(E_{\tau}, V_{\theta, \la})}=\big(\Si_{k=0}^{s}\|D^{k}_{t}u|_{E_{\tau}}\|_{H_{s-k, \de+k}(E)}^{2}\big)^{1/2}.
\end{equation}
The following restriction lemma follows similar from Lemma 3.1 in \cite{Ch1}:
\begin{lemma}\label{restriction2}
\emph{(restriction)}. $\forall \tau\in(-\theta, \theta)$, we have the following continuous inclusion:
$$H_{s+1, \de}(V_{\theta, \la})\subset H_{s, \de+\frac{1}{2}}(E_{\tau}, V_{\theta, \la}),$$
for every $s\in\N$ and $\de\in\R$.
\end{lemma}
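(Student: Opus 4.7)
The plan is to mimic Lemma 3.1 of \cite{Ch1}, adapted to the foliation $V_{\theta,\la}=\cup_{\tau\in(-\theta,\theta)}E_\tau$ defined by $\tau(\bar x,t)=t/\bar\si(\bar x)$. The crucial geometric fact is that on $V_{\theta,\la}$ the weight $\si_e$ is uniformly equivalent to $\bar\si$: since $|t|\leq\theta\bar\si(\bar x)$, one has $\bar\si(\bar x)\leq\si_e(\bar x,t)\leq\sqrt{1+\theta^2}\,\bar\si(\bar x)$, so I may work throughout with $\bar\si$-weights and freely convert back to $\si_e$-weights at the end.

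Fix $\tau\in(-\theta,\theta)$. By mollification it suffices to treat $u$ smooth on $V_{\theta,\la}$. For each multi-index $(\alpha,k)$ with total order $m=|\alpha|+k\leq s$, set $w=D^\alpha D_t^k u$; the goal is to bound the $L^2$-norm of $\bar\si^{(\de+1/2)+m}(w|_{E_\tau})$ on $E$ by a constant multiple of $\|u\|_{H_{s+1,\de}(V_{\theta,\la})}$. For an auxiliary $\tau_1\in[(\tau+\theta)/2,\theta]$, the fundamental theorem of calculus in $\tau'$ yields
\begin{equation*}
|w(\bar x,\tau\bar\si)|^2 = |w(\bar x,\tau_1\bar\si)|^2 - 2\int_{\tau}^{\tau_1} w\,\partial_t w\,\bar\si\,d\tau'.
\end{equation*}
Averaging $\tau_1$ over that interval, then substituting $t'=\tau'\bar\si(\bar x)$ in both integrals, produces the pointwise estimate
\begin{equation*}
|w(\bar x,\tau\bar\si)|^2 \leq \frac{C(\theta,\tau)}{\bar\si(\bar x)}\int_{(\tau+\theta)\bar\si/2}^{\theta\bar\si} |w|^2\,dt' + 2\int_{\tau\bar\si}^{\theta\bar\si}|w|\,|\partial_t w|\,dt'.
\end{equation*}

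Multiplying by $\bar\si(\bar x)^{2((\de+1/2)+m)}$ and integrating over $\bar x\in E$, the first term contributes $C\|\bar\si^{\de+m}w\|_{L^2(V_{\theta,\la})}^2$ (the extra $\bar\si^{-1}$ exactly absorbs the $+1/2$ in the weight), while the second, after splitting $2((\de+1/2)+m)=(\de+m)+(\de+m+1)$ and applying Cauchy--Schwarz, contributes $C\|\bar\si^{\de+m}w\|_{L^2(V_{\theta,\la})}\|\bar\si^{\de+m+1}\partial_t w\|_{L^2(V_{\theta,\la})}$. Since $w=D^\alpha D_t^k u$ has order $m\leq s$ and $\partial_t w=D^\alpha D_t^{k+1}u$ has order $m+1\leq s+1$, each factor is majorized by $\|u\|_{H_{s+1,\de}(V_{\theta,\la})}$ once $\bar\si$ is replaced by the equivalent $\si_e$. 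Summing over all multi-indices with $|\alpha|+k\leq s$ gives the desired bound on the restriction norm (\ref{restriction norm}).

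The argument is essentially bookkeeping; the only genuinely nontrivial step is the substitution $t'=\tau'\bar\si(\bar x)$, whose Jacobian factor $\bar\si$ is precisely what furnishes the half-power weight shift separating $\de$ from $\de+1/2$. The constant degenerates as $\tau\to\pm\theta$ because of the $1/(\theta-|\tau|)$ coming from the averaging, which is consistent with the statement claiming continuity only for each fixed $\tau$ rather than uniformity across the foliation.
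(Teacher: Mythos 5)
Your argument is correct and reproduces the approach the paper borrows from \cite{Ch1}, Lemma 3.1: a fundamental-theorem-of-calculus trace estimate in the $\tau$-direction on the boost region, with the Jacobian of the substitution $t'=\tau'\bar\si(\bar x)$ supplying exactly the half-power shift in the weight, and the equivalence $\bar\si\simeq\si$ on $V_{\theta,\la}$ justifying the passage between weights. The only point worth a word is that the restriction norm (\ref{restriction norm}) takes spatial derivatives of the \emph{already-restricted} slice function $D_t^k u|_{E_\tau}$, which, since $E_\tau$ is the graph $t=\tau\bar\si(\bar x)$, produce by the chain rule additional terms carrying bounded coefficients such as $\tau x^i/\bar\si(\bar x)$ (whose own derivatives decay); each of these is again of the form (controlled coefficient) times a restriction of some $D^\beta D_t^j u$ of total order $\leq s$, so the same pointwise estimate applies and your "summing over multi-indices" step does close, but it skips this bookkeeping.
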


Now we have the first energy estimates.
\begin{lemma}\label{first energy estimates}
\emph{(First Energy Estimates)}. Assume that $\ga^{\mu\nu}$ is regularly hyperbolic, and $(\ga-\eta)\in C^{\infty}\cap C^{1, 0}(V)$, $a_{1}\in C^{\infty}\cap C^{0, 1}(V)$ and $a_{0}\in C^{\infty}\cap C^{0, 2}(V)$. For $L$ defined in (\ref{weakly coupled linear hyperbolic operator}), with $a_{2}=\ga Id$, every $u\in C^{\infty}_{0}(V)$ satisfies the fundamental energy estimates:
\begin{equation}\label{fundamental energy estimates}
\|u\|_{H_{1, \de+\frac{1}{2}}(E_{\tau}, V)}\leq c(\|u\|_{H_{1, \de+\frac{1}{2}}(E, V)}+\|\be\|_{H_{0, \de+2}(V)}),
\end{equation}
where $0\leq\tau\leq\theta$, $\be=Lu$, and $c$ is a constant depending only on $\theta$, the coefficients $h$ of regular hyperbolicity (\ref{coefficient of regular hyperbolicity}) of $\ga$, and $\|D\ga\|_{C^{0, 1}}+\|a_{1}\|_{C^{0, 1}}+\|a_{2}\|_{C^{0, 2}}$.
\end{lemma}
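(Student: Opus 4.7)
The strategy is a weighted energy argument of divergence-type, following the template of \cite{Ch1} and Section 4 of \cite{ChOM}, but with the foliation $\{E_\tau\}$ based on a single Euclidean end $E$ instead of $\R^n$. I introduce the weighted energy density built from the momentum vector $P^\mu=T^{\mu\nu}\ti{n}_\nu$ of (\ref{energy-momentum tensor}):
\begin{equation*}
\mathcal{E}(\tau)=-\int_{E_\tau}\bar{\si}^{2\de+1}\,P^\mu\,\ti{n}_\mu\,d\mu_{E_\tau}+\int_{E_\tau}\bar{\si}^{2\de-1}|u|^2\,d\mu_{E_\tau}.
\end{equation*}
Because $\ga$ is regularly hyperbolic with coefficient $h$ (Definition \ref{regular hyperbolicity}), Lemma \ref{P past time-like} together with a direct computation in the $\{e_0,\dots,e_{n-1}\}$ frame shows that $-P^\mu\ti{n}_\mu$ is pointwise comparable, with constants depending only on $h$, to $|D_t u|^2+|Du|^2$. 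Hence $\mathcal{E}(\tau)$ is equivalent to $\|u\|^2_{H_{1,\de+\frac12}(E_\tau,V)}$ in the sense of the restriction norm (\ref{restriction norm}).

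\textbf{Step 1: divergence identity.} First I apply the divergence theorem to the weighted vector field $W^\mu=\bar{\si}^{2\de+1}P^\mu$ on the slab $V_\tau\subset V_{\theta,\la}$ bounded by $E$, $E_\tau$ and the upper lateral piece $L^+_{\theta,\la}\cap V_\tau$. Condition (4) of Definition \ref{regular hyperbolicity} combined with Lemma \ref{P past time-like} guarantees that on $L^+_{\theta,\la}$ the outward $\ga$-conormal $\mu$ satisfies $P^\mu\mu_\mu\ge 0$, so the lateral contribution enters with a favourable sign and may be discarded.

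\textbf{Step 2: controlling the bulk.} The bulk integrand is
\begin{equation*}
D_\mu W^\mu=(2\de+1)\bar{\si}^{2\de}(D_\mu\bar{\si})P^\mu+\bar{\si}^{2\de+1}D_\mu P^\mu,
\end{equation*}
where by (\ref{divergence of P})
\begin{equation*}
D_\mu P^\mu=2(\ga^{\rho\si}\ti{n}_\rho D_\si u)\cdot(\be-a_1\!\cdot\! Du-a_0\!\cdot\! u)+Q.
\end{equation*}
Since $|D\bar\si|\le 1$, the first piece is pointwise bounded by $\bar\si^{2\de}|Du|^2$, and $|Q|\lesssim\|D\ga\|_{C^0}|Du|^2$; together these are absorbed into the energy density after multiplication by $\bar\si^{2\de+1}$ and integration in $\tau$ (using $d\tau\sim dt/\bar\si$). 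The $a_1,a_0$ terms are handled by Cauchy--Schwarz against $|Du|^2+\bar\si^{-2}|u|^2$, with constants depending only on $\|a_1\|_{C^{0,1}}$, $\|a_0\|_{C^{0,2}}$ and $h$. For the $\be$-term I use
\begin{equation*}
2\bar\si^{2\de+1}|\ga^{\rho\si}\ti{n}_\rho D_\si u|\,|\be|\ \le\ \ep\,\bar\si^{2\de+1}|Du|^2+\ep^{-1}\bar\si^{2\de+3}|\be|^2,
\end{equation*}
and the last term integrates to $\|\be\|^2_{H_{0,\de+2}(V_{\theta,\la})}$.

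\textbf{Step 3: the $u$ term and Gronwall.} To close the energy on the zeroth-order piece I differentiate the second summand of $\mathcal{E}(\tau)$: $\partial_\tau\int_{E_\tau}\bar\si^{2\de-1}|u|^2 \lesssim \int_{E_\tau}\bar\si^{2\de-1}|u|\,|D_tu|$, then Cauchy--Schwarz absorbs this into $\mathcal{E}(\tau)$ itself. Collecting all estimates gives
\begin{equation*}
\mathcal{E}(\tau)\ \le\ \mathcal{E}(0)+C\int_{0}^{\tau}\mathcal{E}(\tau')\,d\tau'+C\|\be\|^2_{H_{0,\de+2}(V_{\theta,\la})},
\end{equation*}
with $C$ depending only on $\theta$, $h$, and $\|D\ga\|_{C^{0,1}}+\|a_1\|_{C^{0,1}}+\|a_0\|_{C^{0,2}}$. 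Gronwall's inequality, combined with the equivalence of $\mathcal{E}(0)$ with $\|u\|^2_{H_{1,\de+\frac12}(E,V)}$ via the restriction Lemma \ref{restriction2}, yields (\ref{fundamental energy estimates}).

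\textbf{Main obstacle.} The delicate point is bookkeeping the weights. The foliation spacing in $t$ is $\bar\si$-proportional, so the bulk weight $\bar\si^{2\de+2}$ (as required by $H_{0,\de+2}(V)$) relates to the slice weight $\bar\si^{2\de+1}$ only after integrating $d\tau$, and the derivative of $\bar\si^{2\de+1}$ must not overwhelm the principal energy; this is what forces the choice of the particular exponent $2\de+1$ in the multiplier. A second subtle point is verifying that the lateral term really has the advertised sign for the boost-type region $V_{\theta,\la}$: this uses exactly Remark \ref{remark of space like and ingoing} and the lower bound $\la\geq 2$ combined with regular hyperbolicity, which is why Condition (4) is built into Definition \ref{regular hyperbolicity} from the start.
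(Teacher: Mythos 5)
Your strategy is the same as the paper's: integrate the divergence of the weighted momentum vector $\si^{2a}P^{\mu}$ over the slab between $E$ and $E_{\tau}$, discard the lateral boundary contribution by the sign argument (Condition (4) of Definition \ref{regular hyperbolicity} together with Lemma \ref{P past time-like}), control the bulk by Cauchy--Schwarz, add a separate zeroth-order term, and run Gronwall. So the route is right.

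However, there is a concrete weight error that prevents the proof as written from yielding (\ref{fundamental energy estimates}). Recall that by Definition \ref{def for weighted sobolev space} the weight scales with derivative order, so
\begin{equation*}
\|u\|^{2}_{H_{1, \de+\frac12}(E_{\tau}, V)}=\int_{E_{\tau}}\si^{2\de+1}|u|^{2}\,d\Si+\int_{E_{\tau}}\si^{2\de+3}|Du|^{2}\,d\Si,
\end{equation*}
where $Du$ denotes the full space-time gradient (the $D_{t}u$ piece comes from the $k=1$ term of the restriction norm (\ref{restriction norm})). Since $-P^{\mu}\ti{n}_{\mu}\sim|Du|^{2}$, the correct multiplier is $\si^{2(\de+\frac32)}=\si^{2\de+3}$ on $P^{\mu}$ (as in the paper's $\tilde{P}^{\mu}$) and $\si^{2\de+1}$ on $|u|^{2}$. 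Your $\mathcal{E}(\tau)$ uses $\si^{2\de+1}$ and $\si^{2\de-1}$, each a factor $\si^{2}$ too small, so $\mathcal{E}(\tau)$ is equivalent to $\|u\|^{2}_{H_{1,\de-\frac12}(E_{\tau}, V)}$, not to $\|u\|^{2}_{H_{1,\de+\frac12}(E_{\tau}, V)}$. As a consequence, your displayed Cauchy--Schwarz split for the $\be$-term does not balance: with $dx\sim\si\,d\tau\,d\Si$, the pairing $\int_{V}\si^{2\de+1}|Du||\be|\,dx$ is naturally bounded using $\|Du\|_{H_{0,\de+\frac12}(E_{\tau},V)}\|\be\|_{H_{0,\de+\frac32}(E_{\tau},V)}$, and the $d\tau$-integration of the latter produces $\|\be\|_{H_{0,\de+1}(V)}$, not $\|\be\|_{H_{0,\de+2}(V)}$. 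The Gronwall conclusion from your energy is therefore the lemma with $\de$ replaced by $\de-1$ on \emph{both} sides, which is internally consistent but does not verify the claim "$\mathcal{E}(\tau)\sim\|u\|^{2}_{H_{1,\de+\frac12}(E_{\tau},V)}$" or match the stated $\|\be\|_{H_{0,\de+2}(V)}$ on the right. The fix is to raise both multipliers by $\si^{2}$; then your argument goes through and agrees with the paper's computation, which uses $x_{1}(\tau)=\int_{E_{\tau}}\si^{2\de+3}|Du|^{2}$ and $x_{0}(\tau)=\int_{E_{\tau}}\si^{2\de+1}|u|^{2}$.

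One minor difference worth keeping: you treat the zeroth-order piece by differentiating $\int_{E_{\tau}}\si^{2\de+1}|u|^{2}\,d\mu_{E_{\tau}}$ in $\tau$, whereas the paper controls $x_{0}(\tau)$ by a Newton--Leibniz integral plus Cauchy--Schwarz on $u(\tau)-u(0)$. Both work; if you differentiate under the integral you should note that the projected slices $\pi(E_{\tau})$ shrink as $\tau$ increases (because of the lateral boundary $L^{+}_{\theta,\la}$), which contributes an extra boundary term that happens to have a favorable sign, so it may be dropped.
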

\begin{proof}
Let $\tilde{P}^{\mu}=\si^{2(\de+\frac{3}{2})}P^{\mu}$. Multiply (\ref{divergence of P}) by $\si^{2(\de+\frac{3}{2})}$, we get:
\begin{equation}\label{weighted divergence of P}
D_{\mu}\tilde{P}^{\mu}=2\si^{2(\de+\frac{3}{2})}(\ga^{\rho\si}\tilde{n}_{\rho}D_{\si}u)\cdot\ga^{\mu\nu}D^{2}_{\mu\nu}u+\tilde{Q},
\end{equation}
where
$$\tilde{Q}=\si^{2(\de+\frac{3}{2})}Q^{\pr},$$
with
$$Q^{\pr}=Q+2(\de+3/2)x^{i}/\si^{2} P^{i}\simeq (D\ga*\ga+\si^{-1}\ga*\ga)Du*Du.$$
Plug in $Lu=\be$,
$$D_{\mu}\tilde{P}^{\mu}=\si^{2(\de+\frac{3}{2})}[2(\ga^{\rho\si}\tilde{n}_{\rho}D_{\si}u)\cdot(\be-a_{1}Du-a_{0}u)+Q^{\pr}].$$

Now we integrate on the upper part $V_{\tau, \la}^{+}=\{x\in V_{\tau, \la}:\ t\geq0\}$ for $\tau\leq\theta$. Since $P$ is compactly supported, the divergence theorem of $(V_{\tau, \la}^{+}, \eta)$ gives,
\begin{equation}\label{integration of divergence of P}
\begin{split}
\int_{E_{\tau}}\tilde{P}^{\mu}\tilde{n}_{\mu}d\Si_{\tau} &-\int_{E} \tilde{P}^{\mu}\tilde{n}_{\mu}d\Si+\int_{L_{\tau, \la}^{+}}\tilde{P}^{\mu}\tilde{\nu}_{\mu}d\si=\int_{V_{\tau, \la}^{+}}\tilde{P}^{\mu}\tilde{n}_{\mu}dx\\
                                       &=\int_{V_{\tau, \la}^{+}}\si^{2(\de+\frac{3}{2})}[2(\ga^{\rho\si}\tilde{n}_{\rho}D_{\si}u)\cdot(\be-a_{1}Du-a_{0}u)+Q^{\pr}]dx,
\end{split}
\end{equation}
where $\tilde{\nu}_{\mu}$ is the unit outer co-normal of the upper lateral boundary $L_{\tau, \la}^{+}=L_{\tau, \la}\cap V_{\tau, \la}^{+}$ under $\eta$, which is future timelike w.r.t. $\ga$ by property $(4)$ of the regular hyperbolicity (\ref{regular hyperbolicity}). Using the fact that $P$ is past time-like(Lemma \ref{P past time-like}), we know that
$$\tilde{P}^{\mu}\tilde{\nu}_{\mu}=\si^{2(\de+\frac{3}{2})}P^{\mu}\tilde{\nu}_{\mu}\geq 0,\ \textrm{on}\ L_{\tau, \la}^{+}.$$

Now define:
\begin{equation}\label{x1}
x_{1}(\tau)=\int_{E_{\tau}}|\si^{\de+3/2}Du|^{2}d\Si=\|Du\|^{2}_{H_{0, \de+\frac{3}{2}}(E_{\tau}, V)}.
\end{equation}
Since $\tilde{n}_{\mu}=\tilde{N}D_{\mu}\tau=\frac{\tilde{N}}{N}n_{\mu}$,
$$P^{\mu}\tilde{n}_{\mu}=T^{\mu\nu}\tilde{n}_{\mu}\tilde{n}_{\nu}=2(\ga^{\mu\si}D_{\si}u\tilde{n}_{\mu})^{2}-|Du|_{\ga}^{2}|\tilde{n}|_{\ga}^{2}$$
$$=(\frac{\tilde{N}}{N})^{2}(2n^{\mu}n^{\nu}+\ga^{\mu\nu})D_{\mu}u D_{\nu}u.$$
Using Proposition 2.3 in \cite{Ch1}, $\Ga^{\mu\nu}=2 n^{\mu}n^{\nu}+\ga^{\mu\nu}$ is uniformly elliptic, with the elliptic coefficient depending only on the coefficient of regular hyperbolicity $h$. Using equation (2.8)(2.13) of \cite{Ch1}, $d\Si_{\tau}\simeq c d\Si$, with $c$ depending only on $\theta$, so we have:
$$\int_{E_{\tau}}\tilde{P}^{\mu}\tilde{n}_{\mu}d\Si_{\tau}\geq c_{1}^{-1}x_{1}(\tau),$$
$$\int_{E}\tilde{P}^{\mu}\tilde{n}_{\mu}d\Si_{0}\leq c_{1}x_{1}(0),$$
where $c_{1}$ is a constant depending only on $\theta$ and the regular hyperbolicity coefficient $h$. Now using Cauchy-Schwartz inequality and the fact $dx=\si d\tau d\Si$ to the right hand side of  (\ref{integration of divergence of P}),
$$|\int_{V_{\tau, \la}^{+}}2\si^{2(\de+\frac{3}{2})}(\ga^{\rho\si}\tilde{n}_{\rho}D_{\si}u)\cdot\be dx|\leq c_{1}\int_{0}^{\tau}\|Du\|_{H_{0, \de+\frac{3}{2}}(E_{\tau^{\pr}}, V)}\|\be\|_{H_{0, \de+\frac{5}{2}}(E_{\tau^{\pr}}, V)}d\tau^{\pr};$$
$$|\int_{V_{\tau, \la}^{+}}2\si^{2(\de+\frac{3}{2})}(\ga^{\rho\si}\tilde{n}_{\rho}D_{\si}u)\cdot a_{1}Du dx|\leq c_{1}\|a_{1}\|_{C^{0, 1}}\int_{0}^{\tau}\|Du\|^{2}_{H_{0, \de+\frac{3}{2}}(E_{\tau^{\pr}}, V)}d\tau^{\pr};$$
$$|\int_{V_{\tau, \la}^{+}}2\si^{2(\de+\frac{3}{2})}(\ga^{\rho\si}\tilde{n}_{\rho}D_{\si}u)\cdot a_{0}u dx|\leq c_{1}\|a_{0}\|_{C^{0, 2}}\int_{0}^{\tau}\|Du\|_{H_{0, \de+\frac{3}{2}}(E_{\tau^{\pr}}, V)}\|u\|_{H_{0, \de+\frac{1}{2}}(E_{\tau^{\pr}}, V)}d\tau^{\pr};$$
$$|\int_{V_{\tau, \la}^{+}}2\si^{2(\de+\frac{3}{2})}Q^{\pr} dx|\leq c_{1}(1+\|D\ga\|_{C^{0, 1}})\int_{0}^{\tau}\|Du\|^{2}_{H_{0, \de+\frac{3}{2}}(E_{\tau^{\pr}}, V)}d\tau^{\pr},$$
where $c_{1}$ denotes a constant depending only on the regular hyperbolicity coefficient $h$. Now define:
\begin{equation}\label{x0}
x_{0}(\tau)=\int_{E_{\tau}}|\si^{\de+1/2}u|^{2}d\Si=\|u\|^{2}_{H_{0, \de+\frac{1}{2}}(E_{\tau}, V)},
\end{equation}
then (\ref{integration of divergence of P}) can be changed to
\begin{equation}\label{x1 estimates}
x_{1}(\tau)\leq c_{2}\big\{x_{1}(0)+\int_{0}^{\tau}\|\be\|_{H_{0, \de+\frac{5}{2}}(E_{\tau^{\pr}}, V)}x_{1}(\tau^{\pr})^{1/2}d\tau^{\pr}+m_{1}\int_{0}^{\tau}y_{1}(\tau^{\pr})d\tau^{\pr}\big\},
\end{equation}
where $c_{2}$ is a constant depending only on $\theta$ and the regular hyperbolicity coefficient $h$, and
\begin{equation}\label{m1}
m_{1}=\|D\ga\|_{C^{0, 1}}+\|a_{1}\|_{C^{0, 1}}+\|a_{0}\|_{C^{0, 2}},
\end{equation}
\begin{equation}\label{y1}
y_{1}(\tau)=x_{1}(\tau)+x_{0}(\tau)=\|u\|^{2}_{H_{1, \de+\frac{1}{2}}(E_{\tau}, V)}.
\end{equation}

Using Cauchy-Schwartz inequality,
$$(u(\tau)-u(0))^{2}=(\int_{0}^{\tau}\frac{\partial u}{\partial\tau^{\pr}}d\tau^{\pr})^{2}\leq\tau\int_{0}^{\tau}(\frac{\partial u}{\partial\tau^{\pr}})^{2}d\tau^{\pr}.$$
Consider the projection map $\pi: V_{\theta, \la}\rightarrow E$ defined by $\pi(\bar{x}, t)=\bar{x}$, then $E^{\pr}_{\tau}=\pi(E_{\tau})\subset E^{\pr}_{\tau^{\pr}}$ if $\tau^{\pr}<\tau$, then
$$\int_{E^{\pr}_{\tau}}|\si^{\de+1/2}(u(\tau)- u(0))|^{2}d\Si\leq\tau\int_{0}^{\tau}\big\{\int_{E^{\pr}_{\tau}}|\si^{\de+3/2}\frac{\partial u}{\partial t}|^{2}d\Si\big\}d\tau^{\pr}\leq \tau\int_{0}^{\tau}x_{1}(\tau^{\pr})d\tau^{\pr}.$$
So,
\begin{equation}\label{x0 estimates}
x_{0}(\tau)\leq 2x_{0}(0)+2\tau\int_{0}^{\tau}x_{1}(\tau^{\pr})d\tau^{\pr}.
\end{equation}
Adding (\ref{x1 estimates}) and (\ref{x0 estimates}), we can get the integral inequality,
\begin{equation}
y_{1}(\tau)\leq c_{2}\big\{y_{1}(0)+\int_{0}^{\tau}\|\be\|_{H_{0, \de+\frac{5}{2}}(E_{\tau^{\pr}}, V)}y_{1}^{1/2}(\tau^{\pr})d\tau^{\pr}+m_{1}\int_{0}^{\tau}y_{1}(\tau^{\pr})d\tau^{\pr}\big\}
\end{equation}
Using the Gronwall lemma,
\begin{equation}
y_{1}^{1/2}(\tau)\leq exp(\frac{1}{2}c_{2}m_{1}\tau)\big\{y_{1}^{1/2}(0)+\frac{1}{2}\int_{0}^{\tau}e^{\frac{1}{2}c_{2}m_{1}\tau^{\pr}}c_{2}\|\be\|_{H_{0, \de+\frac{5}{2}}(E_{\tau^{\pr}}, V)}d\tau^{\pr}\big\}.
\end{equation}
Hence we finished the proof by using $y_{1}^{1/2}(\tau)=\|u\|_{H_{1, \de+\frac{1}{2}}(E_{\tau}, V)}$.
\end{proof}

This result can be weaken to the case of rough coefficients by approximation methods.
\begin{lemma}\label{fundamental energy estimates2}
If $\ga$ is regularly hyperbolic on $V$, $(\ga-\eta)\in C^{1, 0}(V)$, $a_{1}\in C^{0, 1}(V)$ and $a_{0}\in C^{0, 2}(V)$, then every $u\in H_{2, \de}(V)$ satisfies the fundamental energy estimates (\ref{fundamental energy estimates}), with $\be=Lu$.
\end{lemma}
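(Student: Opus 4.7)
The plan is a standard density/approximation argument, extending Lemma \ref{first energy estimates} both in the regularity of the coefficients and in the regularity of $u$. Let $(\ga, a_1, a_0)$ satisfy the hypotheses, and let $u \in H_{2,\de}(V)$ with $\be = Lu \in H_{0,\de+2}(V)$. Since the estimate in Lemma \ref{first energy estimates} depends on the coefficients only through the regular hyperbolicity constant $h$ of $\ga$ and the norm $m_1 = \|D\ga\|_{C^{0,1}} + \|a_1\|_{C^{0,1}} + \|a_0\|_{C^{0,2}}$, it suffices to produce smooth approximations for which these quantities are uniformly controlled and then pass to the limit.

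First I would regularize the coefficients. Choose a standard mollifier $\rho_\ep$ on $\R^n$ and set $\ga_\ep = \ga * \rho_\ep$, $(a_1)_\ep = a_1 * \rho_\ep$, $(a_0)_\ep = a_0 * \rho_\ep$, after extending each coefficient to $\R^n$ by a bounded extension that preserves the relevant $C^{k,l}$ norms and keeps $(\ga_\ep - \eta)$ compactly bounded. Standard mollifier properties give $\ga_\ep \to \ga$ in $C^{1,0}(V)$, $(a_1)_\ep \to a_1$ in $C^{0,1}(V)$, $(a_0)_\ep \to a_0$ in $C^{0,2}(V)$, and the corresponding norms $m_{1,\ep}$ stay bounded by $m_1$ (up to a universal factor). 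Since the set of regularly hyperbolic metrics on $V_{\theta,\la}$ is open in $C^0$ (Remark \ref{remark of regular hyperbolicity}), for $\ep$ small $\ga_\ep$ is regularly hyperbolic with coefficient $h_\ep$ uniformly bounded in terms of $h$, and condition (4) of Definition \ref{regular hyperbolicity} is preserved.

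Next I would approximate $u$. By the density of $C_0^\infty(V)$ in $H_{2,\de}(V)$ (a weighted analogue of the standard result, obtainable by multiplying $u$ by a smooth cutoff in $\si$ and mollifying in $V$), choose $u_n \in C_0^\infty(V)$ with $u_n \to u$ in $H_{2,\de}(V)$. Apply Lemma \ref{first energy estimates} to $u_n$ with the smooth operator $L_\ep = \ga_\ep D^2 + (a_1)_\ep D + (a_0)_\ep$: writing $\be_{n,\ep} = L_\ep u_n$, we obtain
\begin{equation*}
\|u_n\|_{H_{1,\de+\frac12}(E_\tau, V)} \le c\bigl(\|u_n\|_{H_{1,\de+\frac12}(E,V)} + \|\be_{n,\ep}\|_{H_{0,\de+2}(V)}\bigr),
\end{equation*}
with $c$ depending only on $\theta$, $h_\ep$, $m_{1,\ep}$, hence uniformly on $\ep$.

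Finally I would pass to the limit in two stages. Fix $n$ and let $\ep \to 0$: by the convergence of the coefficients in the $C^{k,l}$ norms above and boundedness of $u_n$ in $H_{2,\de}(V)$, $\be_{n,\ep} \to L u_n$ in $H_{0,\de+2}(V)$, giving the estimate for each $u_n$ with the original operator $L$. Now let $n \to \infty$: by the restriction Lemma \ref{restriction2}, $u_n \to u$ in $H_{1,\de+\frac12}(E_\tau, V)$ and in $H_{1,\de+\frac12}(E, V)$ for every $\tau \in (-\theta,\theta)$, and continuity of $L: H_{2,\de}(V) \to H_{0,\de+2}(V)$ (using the hypotheses on the coefficients and the multiplication Lemma \ref{properties of weighted sobolev space}) gives $L u_n \to \be$ in $H_{0,\de+2}(V)$. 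The estimate (\ref{fundamental energy estimates}) for $u$ follows. The main technical obstacle is ensuring that the mollification of $\ga$ preserves the spacelike/ingoing condition on $L^{\pm}_{\theta,\la}$ uniformly; this is handled because that condition is open in $C^0$ and $\ga_\ep \to \ga$ in $C^0$, so for $\ep$ small the condition survives with the same $\la$.
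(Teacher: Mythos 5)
Your proposal is correct and follows essentially the same density/mollification argument that the paper outsources (without detail) to Lemma 4.2 of Christodoulou's boost paper: mollify the coefficients to invoke Lemma \ref{first energy estimates}, approximate $u$ by compactly supported smooth functions, pass to the limit using the uniform dependence of the constant $c$ on $(\theta, h, m_1)$ and the restriction Lemma \ref{restriction2}. One small caveat: your claim that $\ga_\ep \to \ga$ in $C^{1,0}(V)$ overstates what mollification gives on an unbounded domain in a weighted norm, but you only actually use the uniform bound $m_{1,\ep}\lesssim m_1$ plus local uniform convergence of coefficients (which suffices because each $u_n$ has compact support), so the argument is sound as written.
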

\begin{proof}
This comes from an approximation argument exactly the same as Lemma 4.2 in \cite{Ch1}.
\end{proof}

Using more differentiability of the coefficients, we can improve the energy estimates containing high order derivatives. 
\begin{lemma}
\emph{(High Order Estimates)}. Given $s\leq s^{\pr}$ with $s^{\pr}$ defined in (\ref{s prime}). If $\ga$ is regularly hyperbolic, $(\ga-\eta)\in C^{\infty}(V)$, $a_{1}\in C^{\infty}(V)$ and $a_{0}\in C^{\infty}(V)$, then every $u\in C_{0}^{\infty}(V)$ satisfies the main energy estimates:
\begin{equation}\label{high order energy estimates}
\|u\|_{H_{s, \de+\frac{1}{2}}(E_{\tau}, V)}\leq c(\|u\|_{H_{s, \de+\frac{1}{2}}(E, V)}+\|\be\|_{H_{s-1, \de+2}(V)}),
\end{equation}
where $0\leq\tau\leq\theta$, $\be=Lu$, and $c$ is a constant depending only on $\theta$, the coefficient of regular hyperbolicity $h$ and $m$(defined in (\ref{coefficient m})).
\end{lemma}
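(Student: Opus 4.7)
The plan is to proceed by induction on the order $s$, reducing the high-order estimate to the first-order estimate already established in Lemma \ref{fundamental energy estimates2}. The base case $s=1$ is exactly that lemma, so I would focus on the inductive step $s-1 \Rightarrow s$. The key idea is the commutator trick: for each multi-index $\al$ with $|\al|\leq s-1$, set $v_{\al}=D^{\al}u$; differentiating $Lu=\be$ gives
\begin{equation*}
L v_{\al} = D^{\al}\be + [D^{\al}, L]u,
\end{equation*}
where the commutator expands as a sum of terms of the form $D^{\rho}(\ga^{\mu\nu})\cdot D^{\al-\rho}D^{2}_{\mu\nu}u$, $D^{\rho}a_{1}\cdot D^{\al-\rho+1}u$, and $D^{\rho}a_{0}\cdot D^{\al-\rho}u$, with $0<\rho\leq\al$. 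Each such term involves at most $|\al|+1=s$ derivatives of $u$, so it is of ``lower order'' once the weights are arranged correctly.

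The second step is to apply Lemma \ref{fundamental energy estimates2} to $v_{\al}$ with shifted weight $\de+|\al|$ in place of $\de$. After the approximation argument of that lemma (to justify working with $v_{\al}$ which a priori has only $H_{s-|\al|,\cdot}$ regularity), this yields
\begin{equation*}
\|v_{\al}\|_{H_{1, \de+|\al|+\frac{1}{2}}(E_{\tau},V)} \leq c\big(\|v_{\al}\|_{H_{1, \de+|\al|+\frac{1}{2}}(E,V)}+\|Lv_{\al}\|_{H_{0, \de+|\al|+2}(V)}\big),
\end{equation*}
with $c$ depending only on $\theta$, $h$, and $m_{1}$ from (\ref{m1}). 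The initial-data term on the right is controlled by $\|u\|_{H_{s, \de+\frac{1}{2}}(E,V)}$ after summing over $|\al|\leq s-1$ (this is precisely the definition of the restriction norm, once one notes that pure spatial derivatives on $E$ can be traded for $D_{t}$ derivatives via the equation $Lu=\be$ on $E$, absorbing the $\be$ contribution into the right-hand side).

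The third step is to estimate $\|Lv_{\al}\|_{H_{0,\de+|\al|+2}(V)}$. The forcing term $D^{\al}\be$ is controlled by $\|\be\|_{H_{s-1,\de+2}(V)}$ directly. For the commutator, I would invoke the multiplication lemma (Lemma \ref{properties of weighted sobolev space}) applied to $V^{\pm}_{\theta,\la}$, which has the extended cone property. Under the regularity hypothesis (2) with $s_{k}>\frac{n}{2}+k-1$ and $\de_{k}>2-k-\frac{n}{2}$, the products $D^{\rho}a_{k}\cdot D^{\al-\rho+k}u$ land in $H_{0,\de+|\al|+2}(V)$ with norm bounded by a constant (depending on $m$) times $\|u\|_{H_{|\al|+1,\de+\frac{1}{2}+|\al|}(V)}$, and via the restriction Lemma \ref{restriction2} this is controlled by the integral over $\tau'\in[0,\tau]$ of $\|u\|^{2}_{H_{s,\de+\frac{1}{2}}(E_{\tau'},V)}$. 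Summing over $|\al|\leq s-1$ and closing the resulting integral inequality with Gronwall, exactly as in the end of the proof of Lemma \ref{first energy estimates}, delivers (\ref{high order energy estimates}).

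\textbf{Main obstacle.} The delicate point is the bookkeeping of weights in the commutator: each derivative $D^{\rho}$ applied to a coefficient costs $|\rho|$ in weight and regularity, while the corresponding derivatives on $u$ must be absorbed back into the $H_{s,\de+\frac{1}{2}}$-norm with the correct powers of $\si$. The hypotheses $s_{k}>\frac{n}{2}+k-1$ and $\de_{k}>2-k-\frac{n}{2}$ are tailored exactly so that the multiplication Lemma \ref{properties of weighted sobolev space} applies in every case, but verifying this range-by-range for all decompositions $\al=\rho+(\al-\rho)$ and for $k=0,1,2$ is the most tedious part of the argument. Once those multiplication inequalities are checked, the rest (approximation, Gronwall, summing over $|\al|$) follows the template of Lemma \ref{first energy estimates} mutatis mutandis.
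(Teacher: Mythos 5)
The overall strategy (commutator method, weight bookkeeping, Gronwall) is sound and close in spirit to the paper's, but there is a genuine gap in how you propose to close the argument, and it lies precisely where you would need to be most careful.

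You propose to apply the finished first-order estimate of Lemma \ref{fundamental energy estimates2} to each $v_{\al}=D^{\al}u$, obtaining
$\|v_{\al}\|_{H_{1,\de+|\al|+\frac{1}{2}}(E_{\tau},V)}\leq c(\|v_{\al}\|_{H_{1,\de+|\al|+\frac{1}{2}}(E,V)}+\|Lv_{\al}\|_{H_{0,\de+|\al|+2}(V)})$, and then to handle the commutator $\|[D^{\al},L]u\|_{H_{0,\de+|\al|+2}(V)}$ and run Gronwall "exactly as in Lemma \ref{first energy estimates}". This cannot close. The right-hand side of Lemma \ref{fundamental energy estimates2} involves the \emph{global spacetime norm} $\|Lv_{\al}\|_{H_{0,\de+|\al|+2}(V)}$; once the forcing term contains commutator pieces $D^{\rho}\ga\, D^{\al-\rho+2}u$ etc.\ that depend on $u$ throughout all of $V$ (not only on $\{\tau'\leq\tau\}$), the would-be Gronwall inequality has the form
$\text{(slice norm at }\tau)\lesssim\text{(data)}+\text{(spacetime norm of $u$ over all of }V)$,
which has no iterative or contractive structure to exploit. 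The paper avoids this by \emph{not} applying the finished first-order lemma, but instead going back inside its proof and using the \emph{intermediate integral inequality} (\ref{x1 estimates}), which bounds $x_{i}(\tau)$ by $x_{i}(0)$ plus $\int_{0}^{\tau}(\cdots)d\tau'$ -- that is, by quantities on slices $E_{\tau'}$ with $\tau'\in[0,\tau]$ only. The commutator pieces of $\be^{[i-1]}$ are then estimated \emph{slice by slice} (using multiplication on $E_{\tau'}$ plus the restriction lemma for the coefficient factors), and Gronwall closes because everything on the right lives on $\{\tau'\leq\tau\}$. Your proposal would need this same modification: replace the appeal to Lemma \ref{fundamental energy estimates2} by an appeal to (\ref{x1 estimates}) with shifted weight, and estimate the commutator on slices $E_{\tau'}$ rather than on $V$.

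Two smaller discrepancies, neither fatal once the above is fixed. First, the paper moves \emph{all} lower-order terms -- including the undifferentiated $a_{1}Dv$, $a_{0}v$ -- to the right-hand side, so that $v=D^{i-1}u$ solves a pure wave equation $\ga^{\mu\nu}D^{2}_{\mu\nu}v=\be^{[i-1]}$; you instead keep the full $L$ on the left and only the commutator on the right. Both work, but the paper's choice simplifies the $m_{1}$-dependence in the resulting integral inequality (no $a_{0},a_{1}$ contributions to the Gronwall coefficient for $v$). Second, your remark that the initial-data term requires "trading spatial derivatives on $E$ for $D_{t}$ derivatives via the equation" is in the wrong direction and is not needed here at all: $\sum_{|\al|\leq s-1}\|D^{\al}u\|^{2}_{H_{1,\de+|\al|+1/2}(E,V)}$ is, directly from Definition (\ref{restriction norm}), comparable to $\|u\|^{2}_{H_{s,\de+1/2}(E,V)}$. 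The conversion of time derivatives into spatial data $\phi,\psi,\be$ is a separate step carried out in the next lemma (estimate (\ref{estimates of u on E})) and is not part of this lemma.
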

\begin{proof}
Apply $D^{i-1}$ for $2\leq i\leq s$ to $Lu=\be$, we can get
\begin{equation}\label{equation  of ui}
\ga^{\mu\nu}D^{2}_{\mu\nu}u^{[i-1]}=\be^{[i-1]},
\end{equation}
where $u^{[i-1]}=D^{i-1}u$, and
$$\be^{[i-1]}=D^{i-1}\be-\sum_{p=1}^{i-1}\binom{i-1}{p}D^{p}\ga D^{i+1-p}u-\sum_{p=0}^{i-1}\binom{i-1}{p}(D^{p}a_{1}D^{i-p}u+D^{p}a_{0}D^{i-1-p}u).$$
Now define
\begin{equation}\label{xi}
x_{i}(\tau)=\int_{E_{\tau}}|\si^{\de+i+\frac{1}{2}}D^{i}u|^{2}d\Si=\|D^{i}u\|_{H_{0, \de+i+1/2}(E_{\tau}, V)}^{2},
\end{equation}
and apply (\ref{x1 estimates}) in Lemma \ref{first energy estimates} to (\ref{equation  of ui}) with $\de$ replaced by $\de+i-1$, then
$$x_{i}(\tau)\leq c_{1}\big\{x_{i}(0)+\int_{0}^{\tau}\|\be^{[i-1]}\|_{H_{0, \de+i+3/2}(E_{\tau^{\pr}, V})}x_{i}^{1/2}(\tau^{\pr}) d\tau^{\pr}+m_{1}\int_{0}^{\tau}x_{i}(\tau^{\pr})d\tau^{\pr}\big\},$$
with $c_{1}$ depending only on the coefficient of regular hyperbolicity $h$ and $m_{1}$ defined in (\ref{m1}). Compared to (\ref{x1 estimates}), we have only $x_{i}(\tau^{\pr})$ in the third term since there is no first order term $Óa_{0}Ó$ in (\ref{equation  of ui}). Now using the multiplication lemma \ref{properties of weighted sobolev space} and restriction lemma \ref{restriction2} in the case
$$H_{s_{2}-p-1, \de_{2}+p+1/2}(E_{\tau}, V)\times H_{p-1, \de+i+3/2-p}(E_{\tau}, V)\rightarrow H_{0, \de+i+3/2}(E_{\tau}, V),$$
we get 
$$\|D^{p}\ga D^{i+1-p}u\|_{H_{0, \de+i+3/2}(E_{\tau}, V)}\leq c_{3}\|D\ga\|_{H_{s_{2}-1, \de_{2}+1}(V)}\|Du\|_{H_{i-1, \de+3/2}(E_{\tau}, V)},$$
with $c_{3}$ a constant depending only on $i$ and $\de$. Similarly,
$$\|D^{p}a_{1}D^{i-p}u+D^{p}a_{0}D^{i-1-p}u\|_{H_{\tau}(E_{\tau}, V)}\leq c_{3}(\|a_{1}\|_{H_{s_{1}, \de_{1}}(V)}+\|a_{0}\|_{H_{s_{0}, \de_{0}}(V)})\|u\|_{H_{i, \de+1/2}(E_{\tau}, V)}.$$
So
$$\|\be^{[i-1]}\|_{H_{0, \de+i+\frac{3}{2}}(E_{\tau^{\pr}}, V)}\leq \|D^{i-1}\be\|_{H_{0, \de+i+\frac{3}{2}}(E_{\tau^{\pr}}, V)}+c_{4}m\|u\|_{H_{i, \de+1/2}(E_{\tau}, V)},$$
where $c_{4}$ is a constant depending on $s$, $\de$, and $m$ is given by (\ref{coefficient m}). Now define:
\begin{equation}\label{yi}
y_{i}(\tau)=y_{1}(\tau)+\sum_{j=2}^{i}x_{j}(\tau)=\|u\|_{H_{i, \de+1/2}(E_{\tau}, V)}^{2}.
\end{equation}
We have
$$x_{i}(\tau)\leq c_{1}\big\{x_{i}(0)+\int_{0}^{\tau}\|D^{i-1}\be\|_{H_{0, \de+i+\frac{3}{2}}(E_{\tau^{\pr}}, V)}x_{i}^{1/2}(\tau^{\pr})d\tau^{\pr}+c_{4}(m+m_{1})\int_{0}^{\tau}y_{i}(\tau^{\pr})d\tau^{\pr}\big\}.$$
Summing all $i$ from $1$, we can get
\begin{equation}
y_{i}(\tau)\leq c_{1}\big\{y_{i}(0)+\int_{0}^{\tau}\|\be\|_{H_{i-1, \de+5/2}(E_{\tau^{\pr}}, V)}y_{i}^{1/2}(\tau^{\pr})d\tau^{\pr}+c_{4}(m+m_{1})\int_{0}^{\tau}y_{i}(\tau^{\pr})d\tau^{\pr}\big\}.
\end{equation}
Using the Gronwall lemma,
\begin{equation}
y_{i}^{1/2}(\tau)\leq\exp(c_{5}(m+m_{1})\tau)\big\{y_{i}^{1/2}(0)+c_{1}\int_{0}^{\tau}e^{c_{5}(m+m_{1})\tau^{\pr}}|\be\|_{H_{i-1, \de+5/2}(E_{\tau^{\pr}}, V)}d\tau^{\pr}\big\},
\end{equation}
where $c_{5}=\frac{1}{2}c_{1}c_{4}$. Hence we finish the proof realizing $m_{1}\leq c_{6}m$ by the imbedding lemma \ref{properties of weighted sobolev space}.
\end{proof}

Using the equation $Lu=\be$ and an argument similar to Lemma 4.4 in \cite{Ch1}, we can estimate $\|u\|_{H_{s, \de+1/2}(E, V)}$ by the spatial norms $\|\phi\|_{H_{s, \de+1/2}(E)}$, $\|\psi\|_{H_{s-1, \de+3/2}(E)}$ and $\|\be\|_{H_{s-2, \de+5/2}(E, V)}$, where $\phi=u|_{E}$ and $\psi=D_{t}u|_{E}$. We need the following technical lemma which says that we can take the division in the Banach algebra $H_{s, \de}(U)$, when $s>\frac{n}{2}$ and $\de>-\frac{n}{2}$.
\begin{lemma}\label{reciprocal}
Given $U$ satisfied the extended cone property, $s>\frac{n}{2}$, $\de>-\frac{n}{2}$ and a function $f$, if $(f-1)\in H_{s, \de}(U)$, and $|f|\geq c>0$, then $(f^{-1}-1)\in H_{s, \de}(U)$, furthermore, $\|f^{-1}-1\|_{H_{s, \de}(U)}$ is bounded by a constant depending only on $n$, $s$, $\de$ and $\|f-1\|_{H_{s, \de}(U)}$.
\end{lemma}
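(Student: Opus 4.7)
The plan is to establish the result by pointwise differentiation of $f^{-1}$ via a chain-rule identity and then to estimate each resulting term in the weighted $L^{2}$ norm using the Banach algebra property guaranteed by Lemma~\ref{properties of weighted sobolev space}. The zeroth-order bound is immediate: since $f^{-1} - 1 = -(f-1)/f$ and $|f^{-1}| \leq c^{-1}$ pointwise, one has $\|\si^{\de}(f^{-1}-1)\|_{L^{2}(U)} \leq c^{-1}\|f-1\|_{L^{2}_{\de}(U)}$.

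For higher derivatives, I would apply the Faà di Bruno formula to the composition $(1/t) \circ f$. For any multiindex $\al$ with $1 \leq |\al| \leq s$ one obtains
\begin{equation*}
D^{\al}(f^{-1}) = \sum_{\pi} \frac{C_{\pi}}{f^{|\pi|+1}} \prod_{B \in \pi} D^{B} f,
\end{equation*}
where $\pi$ ranges over partitions of an $|\al|$-element set into nonempty blocks $B$, $|\pi|$ denotes the number of blocks, and $D^{B}f$ denotes the corresponding iterated partial derivative of $f$. The hypothesis $|f|\geq c$ gives the pointwise bound $|f^{-(|\pi|+1)}| \leq c^{-(|\pi|+1)}$, so the factor $f^{-(|\pi|+1)}$ behaves like an $L^{\infty}$ multiplier for the purpose of weighted $L^{2}$ estimates, and it suffices to bound the weighted $L^{2}$ norm of the product $\prod_{B \in \pi} D^{B} f$.

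To estimate this product, I would apply the multiplication inequality of Lemma~\ref{properties of weighted sobolev space} iteratively. Each factor $D^{B}f = D^{B}(f-1)$ satisfies $D^{B}f \in H_{s-|B|, \de+|B|}(U)$ by the definition of $H_{s,\de}(U)$, with norm bounded by $\|f-1\|_{H_{s,\de}(U)}$. For blocks $B_{1},\ldots,B_{m}$ with $\sum_{i}|B_{i}| = |\al| \leq s$, iterating the bilinear multiplication bound with appropriately chosen intermediate regularity/weight indices places the product in $L^{2}_{\de+|\al|}(U)$ with norm controlled by $\|f-1\|_{H_{s,\de}(U)}^{m}$. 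Summing over partitions $\pi$ and then over $|\al| \leq s$ yields the desired estimate on $\|f^{-1}-1\|_{H_{s,\de}(U)}$, with a constant depending only on $n$, $s$, $\de$, $c$, and $\|f-1\|_{H_{s,\de}(U)}$.

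The main technical obstacle is the bookkeeping in the iterative application of the multiplication lemma: at each step one must exhibit admissible pairs $(s_{1},\de_{1})$, $(s_{2},\de_{2})$ satisfying $s \leq s_{i}$, $s < s_{1}+s_{2} - n/2$, and $\de < \de_{1}+\de_{2}+n/2$, while ensuring that every intermediate product lives in a space compatible with the next multiplication. Under the standing assumptions $s>n/2$ and $\de>-n/2$, $H_{s,\de}(U)$ is itself a Banach algebra, so the simplest choice is to run the induction in the Banach algebra $H_{s,\de}(U)$ itself: successively multiplying $D^{B_{i}}f \in H_{s-|B_{i}|,\de+|B_{i}|}$ into the partial product, one verifies that the hypotheses hold at each stage because the cumulative regularity loss $\sum |B_{i}|$ never exceeds $s$. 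Once this routine verification is set up, the full norm bound follows and closes the proof.
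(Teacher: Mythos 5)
Your proof is correct and is essentially the same argument as the paper's: both expand $D^{\al}(f^{-1})$ via a chain-rule/Faà di Bruno identity into products of derivatives of $f$ divided by powers of $f$, control the powers of $f^{-1}$ pointwise via $|f|\geq c$, and bound the numerator product by iterating the multiplication lemma, exactly as you verify. One small remark: your constant correctly records a dependence on $c$, which the lemma statement in the paper omits but implicitly requires.
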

\begin{proof}
Since $|f|\geq c>0$, $f^{-1}$ is well defined. Since $f^{-1}-1=-\frac{f-1}{f}$ and $|f|^{-1}\leq c^{-1}$ uniformly bounded, $(f^{-1}-1)\in H_{0, \de}(U)$. Now $D^{\al}(f^{-1}-1)=\sum_{\al_{1}+\cdots +\al_{l}=\al}\frac{D^{\al_{1}}f\cdots D^{\al_{l}}f}{f^{|\al|+1}}$, where $\al$ is multi-indexes, with $1\leq |\al|\leq s$. Since $(f^{|\al|})^{-1}$ is uniformly bounded, and using the multiplication Lemma \ref{properties of weighted sobolev space}, $D^{\al_{1}}f\cdots D^{\al_{l}}f\in H_{0, \de+|\al|}(U)$, hence $D^{\al}(f^{-1}-1)\in H_{0, \de+|\al|}(U)$. So $(f^{-1}-1)\in H_{s, \de}(U)$. The norm bounds follows from the norm bounds of each $D^{\al}(f^{-1}-1)$.
\end{proof}

\begin{lemma}
Given an operator $L$ defined in (\ref{weakly coupled linear hyperbolic operator}) satisfying Hypothesis $(1)$ and $(2)$, then every $u\in H_{s+1, \de}(V)$ with $2\leq s\leq s^{\pr}$, which solves $Lu=\be$ satisfies:
\begin{equation}\label{estimates of u on E}
\|u\|_{H_{s, \de+1/2}(E, V)}\leq c(\|\phi\|_{H_{s, \de+1/2}(E)}+\|\psi\|_{H_{s-1, \de+3/2}(E)}+\|\be\|_{H_{s-2, \de+5/2}(E, V)}),
\end{equation}
where $\phi=u|_{E}$, $\psi=D_{t}u|_{E}$ and $c$ is a constant depending only on $s$, $\de$ and $\mu$(defined in (\ref{coefficient mu})).
\end{lemma}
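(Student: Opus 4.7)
The plan is to estimate $\|D_t^k u|_E\|_{H_{s-k,\de+1/2+k}(E)}$ for each $k=0,1,\dots,s$ by induction on $k$, and then sum to recover the restriction norm $\|u\|_{H_{s,\de+1/2}(E,V)}$ as defined in (\ref{restriction norm}). The cases $k=0$ and $k=1$ are immediate: the two corresponding summands are exactly $\|\phi\|_{H_{s,\de+1/2}(E)}$ and $\|\psi\|_{H_{s-1,\de+3/2}(E)}$, which already appear on the right-hand side of (\ref{estimates of u on E}).

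For $k\geq 2$ I would differentiate the equation $Lu=\be$ in $t$ exactly $k-2$ times. Expanding by the Leibniz rule, the only term carrying $D_t^k u$ is $\ga^{tt} D_t^k u$, while every other term contains at most $k-1$ time derivatives applied to $u$ and at least one derivative of $\ga$, $a_1$, or $a_0$, or else it is $D_t^{k-2}\be$ itself. One can then solve algebraically for $D_t^k u$:
\begin{equation*}
D_t^k u = (\ga^{tt})^{-1}\Bigl\{D_t^{k-2}\be - \sum_{j+|\al|\leq k,\, j<k} c_{j,\al}\, D_t^{k-j-|\al|}\mathcal{C}_\al \cdot D_t^{j} D_x^{\al} u\Bigr\},
\end{equation*}
where $\mathcal{C}_\al$ schematically denotes the coefficients $\ga^{\mu\nu}$, $a_1$, $a_0$. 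The reciprocal $(\ga^{tt})^{-1}$ is controlled using Lemma \ref{reciprocal} together with the regular hyperbolicity of $\ga$: on $E=\{t=0\}$ the co-normal $\ti n$ is proportional to $dt$, so condition $(1)$ of Definition \ref{regular hyperbolicity} forces $-\ga^{tt}|_E\geq a'>0$ for some constant depending only on $h$ and $\theta$.

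Now I would restrict the displayed identity to $E$ and pass to Sobolev norms. The weight bookkeeping works out because by Lemma \ref{restriction2} applied to $\be\in H_{s-1,\de+2}(V)$, one has $D_t^{k-2}\be|_E\in H_{s-k,\de+5/2+(k-2)}(E)=H_{s-k,\de+1/2+k}(E)$, which is precisely the target space. The coefficient factors $D_t^{k-j-|\al|}\mathcal{C}_\al|_E$ lie in the restriction spaces given by Hypothesis $(2)$ and Lemma \ref{restriction2}, with norms bounded by the quantity $\mu$ in (\ref{coefficient mu}); the unknown factors $D_t^j D_x^\al u|_E$ are handled by the inductive hypothesis applied at levels $j<k$ (losing one derivative in the spatial direction, but this loss is compensated by the drop in the $D_t^j$ index since $j+|\al|\leq k$). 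Applying the multiplication Lemma \ref{properties of weighted sobolev case2} term by term and invoking Lemma \ref{reciprocal} for the prefactor gives the bound at step $k$ in terms of $\mu$, the inductive bounds for $j<k$, and $\|D_t^{k-2}\be|_E\|_{H_{s-k,\de+1/2+k}(E)}\leq\|\be\|_{H_{s-2,\de+5/2}(E,V)}$.

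Summing the squared estimates for $k=0,\dots,s$ produces (\ref{estimates of u on E}) with $c$ depending only on $s$, $\de$, and $\mu$. The main obstacle I expect is purely combinatorial: keeping track of the weight and regularity indices inside the Leibniz expansion of $D_t^{k-2}(Lu)$ so that every product lands in the correct $H_{\cdot,\cdot}(E)$ space with norm controlled by $\mu$ (not the full spacetime norm $m$). This is where the sharpness of the restriction inequality of Lemma \ref{restriction2}, which converts one spacetime derivative into a half-unit of additional weight, is essential.
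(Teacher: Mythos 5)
Your proposal follows the paper's proof essentially step for step: induct on the order of the $t$-derivative, apply $D_t^{k-2}$ to $Lu=\be$, solve algebraically for $\ga^{00}D_t^k u$ (your $\ga^{tt}$), control $(\ga^{00})^{-1}$ via Lemma \ref{reciprocal} and the lower bound $-\ga^{00}|_E\geq a'>0$ coming from regular hyperbolicity, then estimate the Leibniz terms using the multiplication and restriction lemmas so that only $\mu$ (not $m$) enters. The observation that condition (1) of Definition \ref{regular hyperbolicity} directly yields $-\ga^{00}|_E\geq a$ because $\tilde n|_E=dt$ is a slightly cleaner route to the same bound the paper derives from $\ga^{00}|_E=-N^{-2}\si^2$; otherwise the argument and bookkeeping match the paper's.
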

\begin{proof}
By the restriction Lemma \ref{restriction2}, $u\in H_{s+1, \de}(V)$ implies that $\phi\in H_{s, \de+1/2}(E)$ and $\psi\in H_{s-1, \de+3/2}(E)$. Now define the following functions on $E$:
$$\psi^{[p]}=D^{p}_{t}u,\quad 0\leq p\leq s.$$
Since
$$\|u\|_{H_{s, \de+1/2}(E, V)}^{2}=\sum_{p=0}^{s}\|\psi^{[p]}\|_{H_{s-p, \de+p+1/2}(E)}^{2},$$
we only need to prove that:
$$\|\psi^{[p]}\|_{H_{s-p, \de+p+1/2}(E)}\leq c_{p}(\|\phi\|_{H_{s, \de+1/2}(E)}+\|\psi\|_{H_{s-1, \de+3/2}}+\|\be\|_{H_{s-2, \de+5/2}(E, V)}).$$
It is true for $p=0, 1$. Let us use a reduction argument to prove this for all $p\leq s$. Suppose it is true for $0\leq q\leq p-1$.  Take $D_{t}^{p-2}$ to the equation $Lu=\be$, and move all the terms containing $t$-derivatives of $u$ of order less than $p$, i.e. $D_{t}^{q}u$ with $q<p$, to the right hand side, then we get
\begin{equation}\label{psi p}
\begin{split}
\ga^{00} & \psi^{[p]}=D_{t}^{p-2}\be-\sum_{q=0}^{p-3}\binom{p-2}{q}(D_{t}^{p-2-q}\ga^{00})\psi^{[q+2]}-\sum_{q=0}^{p-2}\binom{p-2}{q}\big\{2(D_{t}^{p-2-q}\ga^{0i})D_{i}\psi^{[q+1]}\\
              & +(D_{t}^{p-2-q}\ga^{ij})D_{i}D_{j}\psi^{[q]}+(D_{t}^{p-2-q}a_{1}^{0})\psi^{[q+1]}+(D_{t}^{p-2-q}a_{1}^{i})D_{i}\psi^{[q]}+(D_{t}^{p-2-q}a_{0})\psi^{[q]}\big\}.
\end{split}
\end{equation}
Using the multiplication Lemma \ref{properties of weighted sobolev space} and Hypothesis $(1)$ in the case:
$$H_{s_{2}-1-(p-2-q), \de_{2}+1/2+(p-2-q)}(E)\times H_{s-(q+2), \de+1/2+(q+2)}(E)\rightarrow H_{s-p, \de+1/2+p}(E),$$
we can estimate
$$\|(D_{t}^{p-2-q}\ga^{0i})D_{i}\psi^{[q+1]}\|_{H_{s-p, \de+1/2+p}(E)}\leq c_{3}\|\ga-\eta\|_{H_{s_{2}-1, \de_{2}+1/2}(E, V)}\|\psi^{[q+1]}\|_{H_{s-(q+2), \de+1/2+(q+2)}(E)},$$
where $c_{3}$ is a constant depending only on $s$ and $\de$. Now using similar arguments to evaluate the $H_{s-p, \de+p+1/2}(E)$ norm of other terms in (\ref{psi p}), together with our inductive hypothesis, we can get
\begin{equation}\label{ga00 psi p}
\begin{split}
\|\ga^{00}\psi^{[p]}\|_{H_{s-p, \de+p+1/2}(E)} &\leq \|D_{t}^{p-2}\be\|_{H_{s-p, \de+1/2+p}(E)}+ c_{4}\mu\sum_{q=0}^{p-1}\|\psi^{[q]}\|_{H_{s-q, \de+1/2+q}(E)}\\
&\leq c_{p}^{\pr}(\|\phi\|_{H_{s, \de+1/2}(E)}+\|\psi\|_{H_{s-1, \de+3/2}(E)}+\|\be\|_{H_{s-2, \de+5/2}(E, V)}),
\end{split}
\end{equation}
where $\mu$ is defined in (\ref{coefficient mu}), $c_{4}$ is a constant depending only on $s$, $p$ and $\de$, while $c_{p}^{\pr}$ a constant depending only on $\mu$, $s$, $p$ and $\de$.

Here
$$\ga^{00}|_{E}=(\ga^{\mu\nu}D_{\mu}t D_{\nu}t)|_{t=0}=\si^{2}(\ga^{\mu\nu}D_{\mu}\tau D_{\nu}\tau)|_{t=0}=-N^{-2}\si^{2}\leq -c<0,$$
where $c>0$ is a constant depending only on $\theta$ and $h$ according to Section 2 in \cite{Ch1}. Now $(\ga-\eta)\in H_{s_{2}, \de_{2}}(V)$ implies that $(\ga^{00}+1)|_{E}\in H_{s_{2}-1, \de_{2}+1/2}(E)$, hence $((\ga^{00})^{-1}+1)\in H_{s_{2}-1, \de_{2}+1/2}(E)$ by Lemma \ref{reciprocal}, and furthermore $\|(\ga^{00})^{-1}+1\|_{H_{s_{2}-1, \de_{2}+1/2}(E)}$ is bounded by a constant depending only on $n$, $s_{2}$, $\de_{2}$ and $\|\ga^{00}+1\|_{H_{s_{2}-1, \de_{2}+1/2}(E)}$.
Now multiply $\ga^{00}\psi^{[p]}$ by $(\ga^{00})^{-1}$, and use equation (\ref{ga00 psi p}) and the multiplication Lemma \ref{properties of weighted sobolev space}, then we finish the proof.
\end{proof}

By combining all the above estimates, we can get the energy estimates in Theorem \ref{estimates and existence for linear system}.
\begin{theorem}\label{boost estimates for linear system}
Given $L$ a differential operator defined by (\ref{weakly coupled linear hyperbolic operator}) in $V_{\theta, \la}$, satisfying hypotheses (1) and (2). Let $\be\in H_{s-1, \de+2}(V_{\theta, \la})$, $\phi\in H_{s, \de+\frac{1}{2}}(E)$ and $\psi\in H_{s-1, \de+\frac{3}{2}}(E)$, with $2\leq s\leq s^{\pr}$, $\de\in\R$. Then every $u\in H_{s+1, \de}(V)$, which solves $Lu=\be$, with $u|_{E}=\phi,\ D_{t}u|_{E}=\psi$ satisfies the estimates (\ref{estimates for linear equation}).
\end{theorem}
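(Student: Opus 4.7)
The plan is to combine three ingredients already established in Appendix \ref{Linear boost estimates}: the high-order slice estimate (\ref{high order energy estimates}), the Cauchy-data reduction (\ref{estimates of u on E}), and a Fubini-type identification of the bulk norm $\|u\|_{H_{s,\de}(V)}^{2}$ with the $\tau$-integral of the slice norms on $E_{\tau}$. The factor $\theta^{1/2}$ in (\ref{estimates for linear equation}) will emerge precisely from this last integration step.

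First, for the given $u\in H_{s+1,\de}(V)$ solving $Lu=\be$, I would apply (\ref{high order energy estimates}) on every slice $E_{\tau}$, $\tau\in[-\theta,\theta]$, to get a uniform-in-$\tau$ bound
$$
\|u\|_{H_{s,\de+\frac{1}{2}}(E_{\tau},V)}\leq c\bigl(\|u\|_{H_{s,\de+\frac{1}{2}}(E,V)}+\|\be\|_{H_{s-1,\de+2}(V)}\bigr),
$$
where $c$ depends only on $\theta$, $h$, $m$. The restriction from $C^{\infty}_{0}$ to $u\in H_{s+1,\de}(V)$ is handled by the same density argument as in Lemma \ref{fundamental energy estimates2}. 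Next, (\ref{estimates of u on E}) bounds $\|u\|_{H_{s,\de+\frac{1}{2}}(E,V)}$ by the Cauchy data plus $\|\be\|_{H_{s-2,\de+\frac{5}{2}}(E,V)}$, and the latter is absorbed into $\|\be\|_{H_{s-1,\de+2}(V)}$ using the restriction Lemma \ref{restriction2}. Combining these yields
$$
\|u\|_{H_{s,\de+\frac{1}{2}}(E_{\tau},V)}\leq C M,\qquad M:=\|\phi\|_{H_{s,\de+\frac{1}{2}}(E)}+\|\psi\|_{H_{s-1,\de+\frac{3}{2}}(E)}+\|\be\|_{H_{s-1,\de+2}(V)},
$$
with $C$ a continuous increasing function of $(\theta,h,m)$.

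Finally, I would carry out the change of variables $t=\tau\bar{\si}(\bar x)$, so $dt=\bar{\si}\,d\tau$, together with the equivalence $\si(\bar x,t)\sim\bar{\si}(\bar x)$ on $V_{\theta,\la}$. For every multi-index $(k,\al')$ with $k+|\al'|\leq s$,
$$
\int_{V}|D_{t}^{k}D_{x}^{\al'}u|^{2}\si^{2(\de+k+|\al'|)}\,dx\,dt\sim \int_{-\theta}^{\theta}\!\int_{E}|D_{t}^{k}D_{x}^{\al'}u|^{2}\bar{\si}^{2(\de+\frac{1}{2}+k+|\al'|)}\,dx\,d\tau.
$$
Summing over admissible $(k,\al')$ gives the identification $\|u\|_{H_{s,\de}(V)}^{2}\sim\int_{-\theta}^{\theta}\|u\|_{H_{s,\de+\frac{1}{2}}(E_{\tau},V)}^{2}\,d\tau$. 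Plugging in the uniform bound $\|u\|_{H_{s,\de+\frac{1}{2}}(E_{\tau},V)}\leq CM$ produces $\|u\|_{H_{s,\de}(V)}^{2}\leq 2\theta\,(CM)^{2}$, which is exactly (\ref{estimates for linear equation}).

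The only genuinely delicate point is the Fubini identification in the last step: one must match the weights $\si^{\de+|\al|}$ against $\bar{\si}^{\de+\frac{1}{2}+k+|\al'|}$ and account separately for pure spatial, pure temporal, and mixed derivatives. This is essentially the same bookkeeping that underlies the restriction Lemma \ref{restriction2}, and it reduces to the two elementary facts $dt=\bar{\si}\,d\tau$ and $\si\sim\bar{\si}$ on $V_{\theta,\la}$; once it is in hand, the theorem is a direct assembly of the previously established slice and Cauchy-data estimates.
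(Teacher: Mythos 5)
Your proposal is correct and follows essentially the same route as the paper's proof: plug the Cauchy-data bound (\ref{estimates of u on E}) into the slice estimate (\ref{high order energy estimates}), handle rough coefficients and $u\in H_{s+1,\de}(V)$ by the same density argument used for the first-order estimate, and integrate the resulting uniform-in-$\tau$ slice bound over $\tau\in[-\theta,\theta]$ to produce the $\theta^{1/2}$ factor in (\ref{estimates for linear equation}). The extra bookkeeping you supply for the Fubini-type identification of $\|u\|_{H_{s,\de}(V)}$ with $\int_{-\theta}^{\theta}\|u\|_{H_{s,\de+1/2}(E_\tau,V)}^2\,d\tau$ (via $dt=\bar\si\,d\tau$, $\si\sim\bar\si$, and the chain-rule comparison of mixed derivatives) is exactly what the paper leaves implicit in the phrase ``an integration w.r.t.\ $\tau$.''
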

\begin{proof}
First we can plug in (\ref{estimates of u on E}) to $(\ref{high order energy estimates})$. Then it follows from an approximation argument similar to the proof of Lemma 4.5 in \cite{Ch1} and an integration of (\ref{high order energy estimates}) w.r.t. $\tau$ on $[-\theta, \theta]$.
\end{proof}



\parindent 0ex
Department of Mathematics, Stanford University\\
Stanford, CA 94305\\
E-mail: xzhou08@math.stanford.edu

\end{document}